\numberwithin{equation}{section}
\newcommand\codim{\textup{codim}\,}
\newcommand\im{\textup{im}\,}
\newcommand\Tor{\textup{Tor}}
\newcommand\pd{\textup{pd}\,}
\newcommand\coker{\textup{coker}\,}
\DeclareMathOperator{\PP}{\mathbb{P}}
\DeclareMathOperator{\reg}{reg}
\DeclareMathOperator{\link}{link}
\DeclareMathOperator{\st}{st}
\DeclareMathOperator{\rel}{\operatorname{rel}}
\theoremstyle{plain}
\newtheorem{Thm}{Theorem}[section]
\newtheorem{Prop}[Thm]{Proposition}
\newtheorem{Cor}[Thm]{Corollary}
\newtheorem{Lem}[Thm]{Lemma}
\theoremstyle{definition}
\newtheorem{Def}[Thm]{Definition}
\newtheorem{Q}[Thm]{Question}
\newtheorem{Notation}[Thm]{Notation}
\newtheorem{Prob}[Thm]{Problem}
\newtheorem{Rmk}[Thm]{Remark}
\newtheorem{Ex}[Thm]{Example}
\newcolumntype{C}[1]{>{\centering\arraybackslash}m{#1}}
\theoremstyle{remark}
\begin{document}

\title{A Fr\"{o}berg type theorem for higher secant complexes}
\author{Junho Choe and Jaewoo Jung${}^\ast$}
\address{Junho Choe \\ School of Mathematics, Korea Institute for Advanced Study
(KIAS), 85 Hoegiro, Dongdaemun-gu, Seoul, 02455, Republic of
Korea}
\email{junhochoe@kias.re.kr}

\address{Jaewoo Jung \\ Global Basic Research Laboratory - Algebra and Geometry of Spaces of Tensors, and Applications (GBRL-AGSTA), Daegu Gyeongbuk Institute of Science and Technology (DGIST), 333 Techno Jungang-daero, Hyeonpung-eup, Dalseong-gun, Daegu 42988, Republic of Korea}
\email{jaewoojung@dgist.ac.kr}

\thanks{${}^\ast$ Corresponding author}

\date{\today}

\begin{abstract}
We generalize the celebrated Fr{\"o}berg's theorem to embedded joins of copies of a simplicial complex, namely higher secant complexes to the simplicial complex, in terms of property $N_{q+1,p}$ due to Green and Lazarsfeld. 
Furthermore, we investigate combinatorial phenomena parallel to geometric ones observed for higher secant varieties of minimal degree.
\end{abstract}

\keywords{Stanley-Reisner ideals, embedded joins of simplicial complexes}
\subjclass[2020]{Primary:~13F55; Secondary:~14N07, 05E45}
\maketitle
\tableofcontents 
\setcounter{page}{1}

\section{Introduction}
A \emph{(abstract) simplicial complex} $\Delta$ on a finite set $V$ is a set of subsets, called \emph{faces}, of $V$ with the transitive property that if $\Delta\ni F\supseteq G$, then $\Delta\ni G$. We impose a nondegeneracy condition that $\{v\}\in\Delta$ for every $v\in V$. Here every element of $V$ is called a \emph{vertex} of $\Delta$, and we write $V=V(\Delta)$ and call it the \emph{vertex set} of $\Delta$. A face of $\Delta$ is called an \emph{edge} if it consists of exactly two vertices. We define a \emph{graph} $G$ on $V$ as a simplicial complex on $V$ whose faces have at most two vertices. We denote a graph $G$ by $G=(V(G),E(G))$, where $V(G)$ is the vertex set of $G$, and $E(G)$ is the \emph{edge set} of $G$, that is, the set of edges of $G$. For a graph $G$ its \emph{clique complex} $\Delta(G)$ is the simplicial complex on $V(G)$ whose faces are precisely cliques of $G$.

The main object of this paper is \emph{embedded joins} of copies of a simplicial complex.

\begin{Def}
    Let $\Delta$ be a finite simplicial complex on the vertex set $V = V(\Delta)$.
    For an integer $q\geq 1$ we define the \emph{$q$-secant complex} $\sigma_q \Delta$ to $\Delta$ as the simplicial complex on $V$ given by
$$
\sigma_q \Delta = \left\{ F_1 \cup \cdots \cup F_q \subseteq V : F_1, \ldots, F_q \text{ are faces of } \Delta \right\}.
$$
\end{Def}

To state our main result we introduce some families of graphs.
\begin{enumerate}
    \item $\mathcal F_{q,1}$ is the collection of graphs $H\neq\overline{K_{q+1}}$ such that $V(H)$ is not a union of $q$ cliques, but so is every proper subset.
    \item $\mathcal F_{q,2}$ consists of $H\sqcup\overline{K_{q-j}}$ for all integers $1\leq j\leq q$ and all \emph{elementary bipartite graphs} $H$ on $2j+2$ vertices.
\end{enumerate}
Here $\overline{K_s}$ means the complement of the complete graph $K_s$ on $s$ vertices, and for the definition and structure of \emph{elementary bipartite graphs}, see \Cref{EB} and \Cref{EBstructure}.
\begin{Def}
Let $G$ be a finite (simple) graph.
It is called \emph{$q$-secant chordal} for an integer $q\geq 1$ if it is $\mathcal F_{q,1}$-free, $\mathcal F_{q,2}$-free, and $C_{2q+p}$-free for all $p\geq 3$.
That is, $G$ does not contain any graph in $\mathcal F_{q,1} \cup \mathcal F_{q,2} \cup \{C_{2q+p} : p\geq 3\}$ as an induced subgraph.
\end{Def}

Let $\Delta$ be a simplicial complex with vertex set $V=\{0,1,\ldots,r\}$ and $S=\Bbbk[x_0,x_1,\ldots,x_r]$ be a polynomial ring in $r+1$ variables over a field $\Bbbk$ with the standard grading.
The \emph{Stanley-Reisner ideal} of $\Delta$ is defined to be
$$ I(\Delta)=(x_W:W\subseteq V\textup{ is not a face of }\Delta)\subseteq S,$$
where $x_W=\prod_{i\in W}x_i$, and its quotient ring $S(\Delta)=S/I(\Delta)$ is called the \emph{Stanley-Reisner ring} of $\Delta$. 
There are active studies on algebraic properties of Stanley-Reisner ideals and rings from various viewpoints.
For instance, see \cite{MR3070118}, \cite{MR2790928}, \cite{MR2943752}, and \cite{MR3249840}. 

Pick a finitely generated graded $S$-module $M$, and let us say that $M$ has minimal (graded) free resolution of the form in \Cref{MFR} so that we have $\coker d_1\cong M$ and $\im d_i\subseteq (x_0,\ldots,x_r)\cdot F_{i-1}$ for all $i\geq 1$. Then the exponents $\beta_{i,j}=\beta_{i,j}(M)$ of the direct summands are homological invariants of $M$ that are called \emph{graded Betti numbers} of $M$. 

\begin{figure}[h!]
$$
\begin{tikzcd}[column sep=large]
F_0 & \ar[l,"d_1",swap] F_1    & \ar[l,"d_2",swap] \cdots & \ar[l,"d_i",swap] F_i \ar[d,equal] & \ar[l] \cdots \\
    &               &               & {\displaystyle \bigoplus_{j\in\mathbb Z}S^{\beta_{i,j}}(-i-j)}
\end{tikzcd}
$$
    \caption{Minimal free resolution}
    \label{MFR}
\end{figure}

Now our main theorem characterizes, by means of \emph{forbidden induced subgraphs}, simplicial complexes $\Delta$ for a fixed integer $p\geq 1$ such that $S(\sigma_q\Delta)$ satisfies \emph{property $N_{q+1,p}$}, that is,
$$
\beta_{i,j}(S(\sigma_q\Delta))=0\quad\text{whenever}\quad i\leq p\textup{ and }j\geq q+1.
$$
Property $N_{q+1,p}$ is a refinement of having \emph{$(q+1)$-linear} resolution in the sense that $S(\sigma_q\Delta)$ satisfies property $N_{q+1,p}$ for all $p\geq 1$ if and only if $I(\sigma_q\Delta)$ has $(q+1)$-linear resolution, that is, $\beta_{i,j}(I(\sigma_q\Delta))=0$ as long as $j\neq q+1$.

\begin{Thm}\label{thm:main}
Let $\Delta$ be a finite simplicial complex with vertex set $V=V(\Delta)$ and $q\geq 1$ be an integer. Then 
\begin{enumerate}
\item\label{Main1} $S(\sigma_q\Delta)$ satisfies property $N_{q+1,1}$ if and only if we have  $\sigma_q\Delta=\sigma_q\Delta(G)$ for some $\mathcal F_{q,1}$-free graph $G$ on $V$,

\item\label{Main2} it also satisfies property $N_{q+1,2}$ if and only if such a graph $G$ is $\mathcal F_{q,2}$-free, and

\item\label{Main3} it also satisfies property $N_{q+1,p}$ for an integer $p\geq 3$ if and only if such a graph $G$ is $C_{2q+i}$-free for all $3\leq i\leq p$.
\end{enumerate}
In particular, $I(\sigma_q\Delta(G))$ has $(q+1)$-linear resolution if and only if $G$ is $q$-secant chordal.
\end{Thm}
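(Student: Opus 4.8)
The plan is to convert the three Betti-vanishing conditions into purely topological statements via Hochster's formula and then match the relevant reduced homology groups with forbidden induced subgraphs. Writing $S=\Bbbk[x_v:v\in V]$, Hochster's formula gives, in the grading of \Cref{MFR},
$$\beta_{i,j}(S(\sigma_q\Delta))=\sum_{W\subseteq V,\ |W|=i+j}\dim_\Bbbk\widetilde H_{j-1}(\sigma_q\Delta_W;\Bbbk),$$
and directly from the definitions $\sigma_q\Delta_W=\sigma_q(\Delta_W)$, so each summand depends only on the induced subcomplex. Setting $k=j-1$, property $N_{q+1,p}$ is thus equivalent to the vanishing $\widetilde H_k(\sigma_q\Delta_W)=0$ for all $k\geq q$ and all $W$ with $k+2\leq|W|\leq k+1+p$. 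The first thing I would record is that this makes all three properties detectable on induced subcomplexes, which is precisely what permits a forbidden-induced-subgraph characterization.

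For statement (1) I would specialize to $p=1$, so only the range $|W|=k+2$ survives. A complex on $k+2$ vertices has nonzero top reduced homology $\widetilde H_k$ exactly when it is the boundary $\partial\Delta^{k+1}$, that is, when every proper subset of $W$ lies in $\sigma_q\Delta$ while $W$ does not; hence $N_{q+1,1}$ holds iff $\sigma_q\Delta$ has no minimal non-face of size $\geq q+2$. I would then take $G$ to be the $1$-skeleton of $\Delta$, so that $\sigma_q\Delta\subseteq\sigma_q\Delta(G)$ automatically and a set $F$ fails to lie in $\sigma_q\Delta(G)$ iff $G_F$ is not a union of $q$ cliques. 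Restricting a clique cover of $F$ to any $W\subseteq F$ shows that $N_{q+1,1}$ forces $\sigma_q\Delta=\sigma_q\Delta(G)$; moreover minimal non-faces of size exactly $q+1$ are precisely the induced $\overline{K_{q+1}}$'s (these yield the permitted degree-$(q+1)$ generators), while those of size $\geq q+2$ are precisely the induced members of $\mathcal F_{q,1}$. This identifies $N_{q+1,1}$ with $G$ being $\mathcal F_{q,1}$-free.

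For statements (2) and (3) I would fix the graph $G$ from (1) and compute $\widetilde H_k(\sigma_q\Delta(G_W))$, using $\sigma_q\Delta(G)_W=\sigma_q\Delta(G_W)$, over the wider windows $|W|\leq k+1+p$. The crux is a structural calculation of the reduced homology of the $q$-secant complex of a clique complex. For $p=2$ the only new range is $|W|=k+3$, and I would show that $\widetilde H_k(\sigma_q\Delta(G_W))\neq0$ forces $G_W$ to contain an induced disjoint union $H\sqcup\overline{K_{q-j}}$ with $H$ elementary bipartite on $2j+2$ vertices, and conversely that each such configuration contributes a nonzero class; here the elementary-bipartite structure theorem \Cref{EBstructure} is exactly what pins down which bipartite graphs arise, with the two colour classes recording two interacting degree-$(q+1)$ generators and the perfect-matching condition recording minimality. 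For $p\geq3$ I would argue inductively: the lower windows being already controlled, the only remaining obstructions to $\widetilde H_k$-vanishing are induced cycles, where an induced $C_{2q+i}$ produces a nonvanishing class in the expected degree and nothing else does, giving the $C_{2q+i}$-free condition for $3\leq i\leq p$. I expect this homology computation—exhibiting explicit nonzero cycles for exactly the families $\mathcal F_{q,2}$ and $\{C_{2q+i}\}$ while ruling out every other induced configuration across the whole admissible range of $|W|$—to be the main obstacle.

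Finally, the displayed ``in particular'' follows by assembling the three parts: as noted before \Cref{thm:main}, $I(\sigma_q\Delta(G))$ has $(q+1)$-linear resolution iff $S(\sigma_q\Delta(G))$ satisfies $N_{q+1,p}$ for every $p\geq1$. By parts (1)--(3) this happens iff $G$ is simultaneously $\mathcal F_{q,1}$-free, $\mathcal F_{q,2}$-free, and $C_{2q+i}$-free for all $i\geq3$, which is exactly the definition of $G$ being $q$-secant chordal.
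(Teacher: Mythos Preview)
Your reduction via Hochster's formula is correct, and your treatment of part (1) is essentially the paper's argument (which invokes \Cref{Gen} for the description of minimal nonfaces of $\sigma_q\Delta(G)$). The problem is that for parts (2) and (3) you have only identified \emph{what} must be shown, not \emph{how}; you yourself flag the homology computation as ``the main obstacle,'' and indeed nothing in your outline explains why a nonvanishing $\widetilde H_k$ with $|W|=k+3$ must produce an elementary-bipartite configuration, or why for $|W|=k+1+p$ with $p\geq 3$ the only obstructions are induced cycles. As written, those two paragraphs are assertions of the theorem rather than a proof strategy.

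The paper does not attempt a direct homology calculation for these parts. For (2) it works on the algebra side: since $N_{q+1,1}$ holds, the ideal is generated by degree-$(q+1)$ monomials, and any first syzygy is spanned by Koszul relations $\rel(m_1,m_2)$; choosing a nonlinear one with minimal overlap and restricting to the supporting vertices, one lands in the setting of \Cref{EBci}, which recognizes the two coprime factors as the bipartition of an elementary bipartite graph. For (3) the key device you are missing is \emph{edge contraction}: under $N_{q+1,1}$ the ideal is generated in degree $q+1$, so by \Cref{cor:EdgePreserve} contracting any edge preserves the relevant reduced homology while dropping the vertex count by one, effectively reducing $p$ to $p-1$. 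At $p=3$ this feeds back into part (2), so every edge contraction of the offending induced subgraph $G'$ is (elementary bipartite)$\sqcup$(isolated vertices); the purely combinatorial \Cref{bipartite_edge_contraction} then forces $G'$ itself to be an odd cycle plus isolated vertices, and a short check with \Cref{Isol} eliminates the isolated vertices, leaving $G'\cong C_{2q+3}$. For $p\geq 4$ the same contraction trick reduces to $p-1$ and one reads off $C_{2q+p}$. Without edge contraction or an equivalent reduction, your inductive scheme for (3) has no engine.
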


The case $q=1$ has already been shown in \cite[Theorem 2.1]{MR2188445}, and the statement \eqref{Main1} follows from a study by Sturmfels and Sullivant \cite{MR2252121}.

One of the motivations is the celebrated \emph{Fr{\"o}berg's theorem} \cite{MR1171260}. It completely characterizes simplicial complexes $\Delta$ such that $I(\Delta)$ has $2$-linear resolution, asserting that for a graph $G$ the Stanley-Reisner ideal $I(\Delta(G))$ has $2$-linear resolution if and only if $G$ is chordal, that is, it has no induced cycles of four or more vertices. The authors of \cite{MR2188445} refined Fr{\"o}berg's result in view of property $N_{2,p}$ in \cite[Theorem 2.1]{MR2188445}: For a graph $G$ the Stanley-Reisner ring $S(\Delta(G))$ satisfies property $N_{2,p}$ for an integer $p\ge 1$ if and only if there are no chordless cycles of at most $p+2$ vertices in $G$.

Another motivation comes from studies about intriguing classes of projective varieties, namely the families of \emph{varieties of minimal higher secant degree}. 
Assume that the base field $\Bbbk$ is algebraically closed of characteristic $0$ for a while. 
Let $X\subseteq\mathbb P^r$ be a projective variety over $\Bbbk$, where a \emph{variety} is always irreducible and reduced, and set it to be \emph{nondegenerate}, that is, $X$ lies in no hyperplanes.
Then there is an elementary lower bound on the degree of $X$ by its codimension: $\deg X \ge \codim X + 1$, and the varieties having $$
\deg X = \codim X + 1
$$ 
are called \emph{varieties of minimal degree}. Regarding varieties of minimal degree, while their geometric classification has been found by del Pezzo and Bertini (cf. \cite{MR0927946}), their algebraic characterizations have been extensively studied. Especially, by Eisenbud and Goto \cite{MR0741934} a variety $X$ is of minimal degree if and only if $I(X)$ has $2$-linear resolution.

We expand our discussion to the \emph{$q$-secant variety} to $X\subseteq\mathbb P^r$ for an integer $q\geq 1$. 
It is defined to be the Zariski closure
$$
\sigma_q X=\overline{\bigcup_{z_i\in X}\langle z_1,\ldots,z_q\rangle}\subseteq\mathbb P^r,
$$
where $\langle z_1,\ldots,z_q\rangle\subseteq\mathbb P^r$ is the subspace spanned by points $z_1,\ldots,z_q$ of $X$. 
In joint papers \cite{MR4441153} and \cite{choe2022determinantal} with Kwak the first author generalized both the del Pezzo-Bertini classification and the Eisenbud-Goto result in consideration of higher secant varieties. 
In detail, $X\subseteq\mathbb P^r$ (resp.\ $\sigma_q X \subseteq \mathbb{P}^r$) is called a \emph{variety of minimal $q$-secant degree} (resp.\ a \emph{$q$-secant variety of minimal degree}) if for the codimension $c = \codim \sigma_q X$ the \emph{Ciliberto-Russo bound}
$$ \deg \sigma_q X \geq \binom{c+q}{q} $$
(\cite[Theorem 4.2]{MR2199628}) turns into equality. Then higher secant varieties of minimal degree have a classification in terms of determinantal presentation, and it holds that $\sigma_qX\subseteq\mathbb P^r$ is a $q$-secant variety of minimal degree if and only if $I(\sigma_qX)$ has $(q+1)$-linear resolution. However, as one may observe in \cite[Theorem 9.1 and Theorem 9.2]{MR2199628} it seems not easy to completely classify the varieties of minimal higher secant degree and would give rise to rich and interesting geometry.

Now we drop the irreducibility condition on $X\subset\mathbb P^r$. 
In this case the notion of variety of minimal degree can be naturally extended by using the Eisenbud-Goto characterization. 
Indeed, as stated in \cite[Theorem 0.4]{MR2275024} the possibly reducible algebraic set $X\subset\mathbb P^r$ has a $2$-linear resolution if and only if its irreducible components are varieties of minimal degree in the span of each and are ``linearly joined" according to a specific ordering of themselves.

In order to explore higher secant varieties in a broader setting, we focus on \emph{coordinate subspace arrangements}, that is, unions of coordinate linear spaces in $\mathbb P^r$. For a coordinate subspace arrangement $Z\subseteq\mathbb P^r$ the homogeneous ideal $I(Z)$ is generated by squarefree monomials, and one can study algebraic properties of $I(Z)$ through the associated simplicial complex $\Delta$, using the \emph{Stanley-Reisner correspondence}. We define the $q$-secant variety $\sigma_qZ\subseteq\mathbb P^r$ to $Z\subseteq\mathbb P^r$ with the same formula as above. 

We remark that the Stanley-Reisner correspondence identifies $\sigma_qZ$ and $\sigma_q\Delta$  (\cite[Corollary 3.3]{MR1749874}). Hence, \Cref{thm:main} resolves a combinatorial counterpart to the classification problem for varieties of minimal higher secant degree, and its last assertion can be said to provide a combinatorial explanation for why varieties of minimal $q$-secant degree seem various and more diverse as $q$ increases, as in \cite[Theorem 9.1 and Theorem 9.2]{MR2199628}.

Below are corollaries of \Cref{thm:main}, regarding property $N_{q+1,p}$ for integers $q\geq 1$ and $p \geq 1$. 

\begin{Cor}\label{MainCor}
Let $\Delta$ be a finite simplicial complex and $q\geq 1$ be an integer. Assume that $S(\sigma_j\Delta)$ satisfies property $N_{j+1,1}$ for all $2\leq j<q$. Then if $S(\sigma_q\Delta)$ satisfies property $N_{q+1,p}$ for an integer $p\geq 3$, then $S(\sigma_{q+1}\Delta)$ satisfies property $N_{q+2,p-2}$. In particular, given an integer $p\geq 1$ the Stanley-Reisner ring $S(\sigma_q\Delta)$ satisfies property $N_{q+1,p-2q+2}$ for every $q\geq 1$ as long as it holds for the case $q=1$.
\end{Cor}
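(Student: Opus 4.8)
The plan is to read everything off from \Cref{thm:main}. Since property $N_{q+1,p}$ implies $N_{q+1,1}$, the hypotheses say that $S(\sigma_j\Delta)$ has property $N_{j+1,1}$ for every $2\le j\le q$, so by \Cref{thm:main} each $\sigma_j\Delta$ equals $\sigma_j\Delta(G_j)$ for some $\mathcal F_{j,1}$-free graph $G_j$ on $V$. First I would argue that these graphs can be chosen to coincide --- the natural candidate being the $1$-skeleton of $\Delta$ --- so that with a single graph $G$ one has $\sigma_j\Delta=\sigma_j\Delta(G)$ for all $2\le j\le q$, and $G$ simultaneously avoids $\mathcal F_{2,1},\dots,\mathcal F_{q,1}$ as induced subgraphs, as well as $\mathcal F_{q,2}$ and all $C_{2q+i}$ with $3\le i\le p$ (the latter exclusions from \Cref{thm:main} applied to $\sigma_q\Delta$). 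Next I would propagate the realization up one step via the embedded-join identity
$$\sigma_{q+1}\Delta=\{\,F\cup F':F\in\sigma_2\Delta,\ F'\in\sigma_{q-1}\Delta\,\},$$
obtained by grouping a facet $F_1\cup\cdots\cup F_{q+1}$ as $(F_1\cup F_2)\cup(F_3\cup\cdots\cup F_{q+1})$: substituting $\sigma_2\Delta=\sigma_2\Delta(G)$ and $\sigma_{q-1}\Delta=\sigma_{q-1}\Delta(G)$ yields $\sigma_{q+1}\Delta=\sigma_{q+1}\Delta(G)$, the small cases $q\le 2$ being checked directly (for $q=1$ the property $N_{2,p}$ already forces $\Delta=\Delta(G)$). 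By \Cref{thm:main} applied to $\sigma_{q+1}\Delta$, it then suffices to show that $G$ avoids $\mathcal F_{q+1,1}$, $\mathcal F_{q+1,2}$, and $C_{2(q+1)+i}$ for $3\le i\le p-2$.

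The cycle conditions are free, since $2(q+1)+i=2q+(i+2)$ and $\{\,i+2:3\le i\le p-2\,\}\subseteq\{\,i:3\le i\le p\,\}$. For $\mathcal F_{q+1,1}$ (I take $q\ge 2$, the case $q=1$ being part of the small-case check): an induced copy in $G$ corresponds, after complementation, to an induced $(q+2)$-vertex-critical subgraph $H$ of $\overline G$ with $H\ne K_{q+2}$; as a colour-critical graph containing a clique of size equal to its chromatic number must itself be that clique, $H$ is imperfect, so by the Strong Perfect Graph Theorem it has an induced odd hole $C_{2k+1}$ $(k\ge 2)$ or an induced odd antihole $\overline{C_{2k+1}}$ $(k\ge 2)$. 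In the first case $\overline{C_{2k+1}}$ is an induced subgraph of $G$ lying in $\mathcal F_{2,1}$; in the second, the computation $\chi(\overline{C_{2k+1}})=k+1\le \chi(H)=q+2$ forces $k\le q+1$, and the induced subgraph $C_{2k+1}$ of $G$ lies in $\mathcal F_{k,1}$ when $k\le q$ and equals $C_{2q+3}$ when $k=q+1$ --- either way contradicting one of the exclusions already established for $G$. For $\mathcal F_{q+1,2}$: an induced copy in $G$ has the form $H'\sqcup\overline{K_{q+1-j}}$ with $H'$ elementary bipartite on $2j+2$ vertices, $1\le j\le q+1$; if $j\le q$, deleting one isolated vertex gives an induced member of $\mathcal F_{q,2}$, a contradiction. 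If $j=q+1$ then $G$ contains an induced elementary bipartite graph $H'$ on $2q+4$ vertices: if $H'=C_{2q+4}$ this contradicts $C_{2q+4}$-freeness, and otherwise I would use the bipartite ear decomposition of \Cref{EBstructure} to delete the internal vertices of a non-trivial ear of $H'$, producing an induced subgraph of the shape $H''\sqcup\overline{K_{q-j''}}$ with $H''$ elementary bipartite on $2j''+2$ vertices for some $1\le j''\le q$ --- again an induced member of $\mathcal F_{q,2}$, a contradiction.

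The ``in particular'' then follows by induction on $q$. The base case $q=1$ is the assumed property $N_{2,p}$ of $S(\Delta)=S(\sigma_1\Delta)$, which in addition makes $\Delta=\Delta(G)$ flag, so that all the identities $\sigma_j\Delta=\sigma_j\Delta(G)$ hold with one fixed $1$-skeleton $G$, settling the coherence point for the whole induction; the inductive step is the first part of the corollary applied with $q$ decreased by one. The index arithmetic is compatible: if $p-2q+2\le 0$ the assertion $N_{q+1,p-2q+2}$ is vacuous, and if $p-2q+2\ge 1$ then $p\ge 2q-1$, which keeps every intermediate exponent at least $3$ when the first part is invoked and at least $1$ when an auxiliary hypothesis $N_{j+1,1}$ is needed.

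The step I expect to be the main obstacle is the case $j=q+1$ of the $\mathcal F_{q+1,2}$ argument: turning the ear-decomposition structure of \Cref{EBstructure} into a clean ``peeling'' statement that always produces an induced member of $\mathcal F_{q,2}\cup\{C_{2q+4}\}$ requires carefully tracking how many pairwise non-adjacent internal ear vertices remain to play the role of the isolated part. Intertwined with this is the complex-level bookkeeping in the first paragraph --- that a single graph $G$ realizes $\sigma_2\Delta,\dots,\sigma_{q+1}\Delta$ at once --- which is exactly where the hypotheses on the intermediate secant complexes are used, and where the small cases $q\le 2$ must be verified by hand.
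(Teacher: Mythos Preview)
Your approach is essentially the paper's: reduce to the $1$-skeleton graph $G$, then establish $\mathcal F_{q+1,1}$-freeness via the Strong Perfect Graph Theorem and $\mathcal F_{q+1,2}$-freeness via the ear decomposition --- exactly the content the paper has already packaged as \Cref{Forbidden1} and \Cref{Forbidden2}. The one spot where the paper is cleaner is the complex-level bookkeeping: instead of your embedded-join identity (which leaves $q=2$ to be checked separately), the paper simply invokes \Cref{Eventually} to pass from $\sigma_q\Delta=\sigma_q\Delta(G)$ to $\sigma_{q+1}\Delta=\sigma_{q+1}\Delta(G)$ in one stroke.
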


Hence, if $I(\Delta)$ has $2$-linear resolution, then $I(\sigma_q\Delta)$ has $(q+1)$-linear resolution for all $q\geq 2$, which is parallel to the fact (cf.\ \cite{MR2199628}) that if $X\subseteq\mathbb P^r$ is a variety of minimal degree, then $\sigma_qX\subseteq\mathbb P^r$ is a $q$-secant variety of minimal degree for every $q\geq 2$. Also notice that the same vanishing pattern as in the last assertion is observed for sufficiently large degree complete embeddings of smooth irreducible curves due to Ein, Niu, and Park \cite{MR4160876}.

\begin{Cor}\label{InnerProj}
Let $G$ be a finite simple graph and $q\geq 1$ be an integer. 
Then if $G$ is $q$-secant chordal, then so is its edge contraction $G/e$.
More generally, for a finite simplicial complex $\Delta$ if $S(\sigma_q\Delta)$ satisfies property $N_{q+1,p}$ for an integer $p\geq 1$, then $S(\sigma_q(\Delta/e))$ satisfies property $N_{q+1,p-1}$ for every edge $e$ of $\Delta$.
\end{Cor}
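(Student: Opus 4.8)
The plan is to reduce the statement to a combinatorial one via \Cref{thm:main} and then to analyse it by ``vertex splitting'', the operation inverse to edge contraction.

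Fix an edge $e=\{u,v\}$ of $\Delta$ and let $\pi$ be the map identifying $u$ with $v$. The first observation is the formal identity $\sigma_q(\Delta/e)=(\sigma_q\Delta)/e$, immediate from the definitions because $\pi(F_1\cup\cdots\cup F_q)=\pi(F_1)\cup\cdots\cup\pi(F_q)$. Assume now that $S(\sigma_q\Delta)$ satisfies $N_{q+1,p}$ with $p\geq 2$, the case $p=1$ being trivial as the conclusion is then vacuous. Then by \Cref{thm:main} we may write $\sigma_q\Delta=\sigma_q\Delta(G)$ for a graph $G$ that is $\mathcal F_{q,1}$-free, $\mathcal F_{q,2}$-free, and $C_{2q+i}$-free for $3\leq i\leq p$. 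The key preliminary is the identity $(\sigma_q\Delta(G))/e=\sigma_q\Delta(G/e)$: the inclusion $\Delta(G)/e\subseteq\Delta(G/e)$ is formal, so ``$\subseteq$'' holds after applying $\sigma_q$, whereas for ``$\supseteq$'' one must verify that every union of $q$ cliques of $G/e$ through the contracted vertex already lies in $\sigma_q(\Delta(G)/e)$; this is exactly where the hypotheses on $G$ enter, since a clique through the contracted vertex that does not ``lift'' to a face of $\Delta(G)/e$ forces an induced $4$-cycle through both $u$ and $v$ in $G$, and analogous failures for honest unions of $q$ cliques force larger induced elementary bipartite subgraphs, each excluded by the assumed freeness of $G$.

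With this in hand, \Cref{thm:main} applied to $\Delta/e$ reduces the theorem to three ``lifting'' statements for the contraction $G\mapsto G/e$: an induced subgraph of $G/e$ lying in $\mathcal F_{q,1}$ produces an induced subgraph of $G$ in $\mathcal F_{q,1}\cup\mathcal F_{q,2}$; an induced subgraph in $\mathcal F_{q,2}$ produces one in $\mathcal F_{q,2}\cup\{C_{2q+3}\}$; and an induced $C_{2q+i}$ with $i\geq 3$ produces an induced $C_{2q+i}$ or $C_{2q+i+1}$. Feeding these into \Cref{thm:main} converts $N_{q+1,p}$ for $S(\sigma_q\Delta)$ into $N_{q+1,p-1}$ for $S(\sigma_q(\Delta/e))$, and their conjunction is exactly the assertion that $q$-secant chordality passes from $G$ to $G/e$, so the first assertion of the corollary is the instance obtained by letting $p\to\infty$. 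To prove each lifting statement, argue by contraposition: let $H'$ be the offending induced subgraph of $G/e$, on a vertex set $S'$. If the contracted vertex $w$ is not in $S'$, then $G[S']=H'$ and we are done; if $w\in S'$, write $S'=\{w\}\sqcup T$ and inspect the induced subgraphs $G[T\cup\{u\}]$, $G[T\cup\{v\}]$, and $G[T\cup\{u,v\}]$, the last of which is $H'$ with $w$ re-expanded into the edge $e$. The cyclic case is immediate from this picture: since contracting an edge of $C_m$ yields $C_{m-1}$, an induced $C_{2q+i}$ in $G/e$ comes either from an induced $C_{2q+i}$ in $G[T\cup\{u\}]$ or $G[T\cup\{v\}]$, or, when the two cycle-neighbours of $w$ get separated, from an induced $C_{2q+i+1}$ in $G[T\cup\{u,v\}]$.

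The main obstacle is the analysis for $\mathcal F_{q,1}$ and $\mathcal F_{q,2}$, where one cannot simply transplant the forbidden subgraph: a member of $\mathcal F_{q,1}$ in $G/e$ could, after splitting, leave in $G$ only an induced $\overline{K_{q+1}}$, which is allowed. Ruling this out requires the minimality built into the definition of $\mathcal F_{q,1}$ together with the explicit structure of elementary bipartite graphs (\Cref{EBstructure}), and it is precisely the ``staircase'' in \Cref{thm:main} --- $\mathcal F_{q,1}$-freeness of $G/e$ being obtained from $\mathcal F_{q,1}$- \emph{and} $\mathcal F_{q,2}$-freeness of $G$, and $\mathcal F_{q,2}$-freeness of $G/e$ from $\mathcal F_{q,2}$- \emph{and} $C_{2q+3}$-freeness of $G$ --- that makes the bookkeeping close. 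Carrying this out, together with the reduction of the first paragraph, completes the proof.
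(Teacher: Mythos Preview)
Your strategy --- reduce via \Cref{thm:main} to combinatorial ``lifting'' of forbidden subgraphs under vertex splitting --- is genuinely different from the paper's, and your treatment of the cycle case is correct. But there are two real gaps. First, the identity $(\sigma_q\Delta(G))/e=\sigma_q\Delta(G/e)$ is not established: the assertion that failures for unions of $q$ cliques ``force larger induced elementary bipartite subgraphs'' is only a slogan, and in fact the identity is equivalent (via \Cref{thm:Prolong}, since $\Delta(G)/e$ and $\Delta(G/e)$ share the underlying graph $G/e$) to $I(\sigma_q(\Delta(G)/e))$ being generated in degree $q+1$, i.e.\ to property $N_{q+1,1}$ for the contracted complex --- part of what you are trying to prove. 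Second, and more seriously, the lifting claims for $\mathcal F_{q,1}$ and $\mathcal F_{q,2}$ are only described, not proved; you yourself call this ``the main obstacle'' and then end with ``carrying this out\ldots completes the proof'' without doing so. Members of $\mathcal F_{q,1}$ for $q\ge 3$ have no uniform description (cf.\ \Cref{tab:Ftable}), so showing that a vertex split of such a graph must land in $\mathcal F_{q,1}\cup\mathcal F_{q,2}\cup\{\overline{K_{q+1}}\}$ is not routine.

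The paper sidesteps all of this by staying homological. By Hochster's formula (\Cref{thm:Homology}), property $N_{q+1,p-1}$ for $S(\sigma_q(\Delta/e))$ amounts to the vanishing of $\widetilde H_{j-1}(\sigma_q\Delta/e[W],\Bbbk)$ for each vertex set $W$ with $|W|=i+j$, $i\le p-1$, $j\ge q+1$. If the contracted vertex $v_e\notin W$ this is immediate from the hypothesis on $\Delta$; if $v_e\in W$, set $\widetilde W=(W\setminus\{v_e\})\cup e$ and invoke \Cref{cor:EdgePreserve} to obtain $\widetilde H_{j-1}(\sigma_q\Delta/e[W],\Bbbk)\cong\widetilde H_{j-1}(\sigma_q\Delta[\widetilde W],\Bbbk)$, which vanishes since $|\widetilde W|=(i+1)+j$ with $i+1\le p$. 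No forbidden-subgraph analysis is needed, the argument works for arbitrary $\Delta$, and the $q$-secant chordal statement then follows from two applications of \Cref{thm:main} at the end.
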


A geometric twin of the first assertion is the fact that if $X\subseteq\mathbb P^r$ is a variety of minimal $q$-secant degree for an integer $q\geq 1$, then so is its linear projection from a general point of $X$, namely a general inner projection of $X$.

On the other hand, results in \cite{MR4441153} also suggest a concept of subextremal higher secant varieties beyond varieties of minimal degree. They are called \emph{del Pezzo higher secant varieties}, defined by both degree and sectional genus, and admit a syzygetic characterization: A $q$-secant variety $\sigma_qX\subsetneq\mathbb P^r$ is \emph{del Pezzo} if and only if \Cref{DelPezzo} determines all the values of $\beta_{i,j}=\beta_{i,j}(S(\sigma_qX))$, where 
$$
\beta_{p,q}=\binom{p+q-1}{q}\binom{c+q}{p+q}-\binom{c+q-p-1}{q-1}\binom{c+q-1}{c+q-p}
$$
for integers $0 < p < c= \codim \sigma_q X$, and the remaining entries are all zero except for $\beta_{0,0}=\beta_{c,2q}=1$.
Note that $S(\sigma_q X)$ turns out to be Gorenstein in this case. 

\begin{table}[h!]
\begin{tabular}{c|ccccc}
 $j \setminus i$  & $0$ & $1$             & $\cdots$ & $c-1$             & $c$ \\ \hline
$0$               & $1$ & .             & $\cdots$ & .                & . \\
$q$               & . & $\beta_{1,q}$   & $\cdots$ & $\beta_{c-1,q}$    & . \\
$2q$              & . & .             & $\cdots$ & .                & $1$
\end{tabular}
    \caption{Del Pezzo $q$-secant varieties and complexes}
    \label{DelPezzo}
\end{table}

\begin{Def}\label{Def:delPezzo}
Let us call a $q$-secant complex $\sigma_q\Delta$ \emph{del Pezzo} if the $\beta_{i,j}(S(\sigma_q\Delta))$ are the same as those of a del Pezzo $q$-secant variety.    
\end{Def}
Then for each $q\geq1$ we obtain a classification of graphs whose clique complexes yield del Pezzo $q$-secant complexes.
Recall that a vertex of a simplicial complex $\Delta$ is called \emph{universal} if it belongs to all maximal faces, namely facets, of $\Delta$. Such a vertex has no effects on the graded Betti numbers.

\begin{Thm}\label{MainDelPezzo}
Let $G$ be a finite simple graph and $q\geq 1$ be an integer.
Suppose that $\sigma_q\Delta(G)$ has no universal vertices. Then $\sigma_q\Delta(G)$ is del Pezzo if and only if
\begin{enumerate}
    \item $\codim \sigma_q\Delta(G) = 1$, and $G\in\mathcal F_{q,1}$ with exactly $2q+1$ vertices,
    \item $\codim \sigma_q\Delta(G) = 2$, and $G$ is elementary bipartite on $2q+2$ vertices, or
    \item $\codim \sigma_q\Delta(G) \geq 3$, and $G$ is isomorphic to $C_{2q+c}$, where $c=\codim \sigma_q\Delta(G)$.
\end{enumerate}
\end{Thm}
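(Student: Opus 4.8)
Throughout write $R=\Bbbk[\sigma_q\Delta(G)]$ and $c=\codim\sigma_q\Delta(G)$. The plan is to first extract structural consequences from the shape of the Betti table in \Cref{DelPezzo} and then feed them into \Cref{thm:main}. Since \Cref{DelPezzo} has $\pd R=c$, the ring $R$ is Cohen--Macaulay, and since its Cohen--Macaulay type is $\beta_{c,2q}=1$ it is Gorenstein; as $\sigma_q\Delta(G)$ has no universal vertices, $R$ is then Gorenstein$^{*}$, i.e.\ $\sigma_q\Delta(G)$ is a $\Bbbk$-homology sphere. Hochster's formula applied to the full vertex set (the only $W$ that can contribute to $\beta_{c,2q}$, and the only place the top homology of the sphere can land) forces $|V(G)|=2q+c$ and $\dim\sigma_q\Delta(G)=2q-1$. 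On the other hand \Cref{DelPezzo} says precisely that $R$ satisfies $N_{q+1,c-1}$ but not $N_{q+1,c}$, so \Cref{thm:main} pins down which graphs in $\mathcal F_{q,1}\cup\mathcal F_{q,2}\cup\{C_{2q+i}\}$ occur as induced subgraphs of $G$: none of the level $<c$ obstructions, but some level-$c$ one, namely a member of $\mathcal F_{q,1}$ if $c=1$, of $\mathcal F_{q,2}$ if $c=2$, and (necessarily, since $G$ avoids everything else) $C_{2q+c}$ if $c\ge3$. I then treat the three cases in turn, proving the easy implication first in each.

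For $c=1$: unwinding the definition, ``$G\in\mathcal F_{q,1}$ with $2q+1$ vertices'' is equivalent to ``every proper subset of $V(G)$ is a union of at most $q$ cliques of $G$ but $V(G)$ is not'', i.e.\ to $\sigma_q\Delta(G)=\partial\Delta^{2q}$, whose Stanley--Reisner ideal is principal of degree $2q+1$; this is exactly the $c=1$ instance of \Cref{DelPezzo}. Conversely the del Pezzo table with $c=1$ forces the unique minimal non-face of $\sigma_q\Delta(G)$ to have $2q+1$ vertices, so $\sigma_q\Delta(G)=\partial\Delta^{2q}$ with $|V(G)|=2q+1$, whence $G\in\mathcal F_{q,1}$. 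For $c=2$: a Gorenstein ideal of codimension $2$ is a complete intersection, so $I(\sigma_q\Delta(G))=(x_A,x_B)$ with $A\sqcup B=V(G)$ and $|A|=|B|=q+1$ (the generator degrees being $q+1$ since $\beta_{1,q}=2$), i.e.\ $\sigma_q\Delta(G)=\partial\Delta_A\ast\partial\Delta_B$, and the Koszul complex on $x_A,x_B$ gives the $c=2$ instance of \Cref{DelPezzo}. It remains to check that $\sigma_q\Delta(G)$ equals such a join exactly when $G$ is elementary bipartite with colour classes $A,B$: that $A$ and $B$ are independent in $G$ follows because an edge inside $A$ together with the remaining $q-1$ vertices of $A$ would exhibit $A$ itself as a face, and the condition that each facet $(A\setminus\{a\})\cup(B\setminus\{b\})$ of the join be a union of $q$ cliques of a bipartite graph is, by the surplus form of Hall's condition (\Cref{EBstructure}), equivalent to $G-a-b$ having a perfect matching for all $a\in A$, $b\in B$ --- which in turn is equivalent to $G$ being elementary bipartite.

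The case $c\ge3$ is the core. For the reverse implication one identifies $\sigma_q\Delta(C_{2q+c})$ with the boundary complex of the cyclic polytope $C(2q+c,2q)$ --- Gale's evenness condition for its facets translates precisely into the requirement that every cyclic arc of $C_{2q+c}[F]$ have even length --- so it is Gorenstein$^{*}$ of codimension $c$ with the Upper-Bound $h$-vector $h_i=\binom{c+i-1}{i}$ for $i\le q$; together with $N_{q+1,c-1}$ (from \Cref{thm:main}, as $C_{2q+c}$ avoids all lower obstructions) and the fact that its Stanley--Reisner ideal is generated in degree $q+1$ (the minimal non-faces are exactly the independent $(q+1)$-subsets of the cycle), this forces the resolution into the del Pezzo shape, and its entries then agree with those of \Cref{DelPezzo}, since both are Gorenstein of codimension $c$ with a $(q+1)$-linear strand and the same Hilbert series. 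For the forward implication we have $G\supseteq C_{2q+c}$ on the full vertex set $V(G)$. First, $G$ is triangle-free: a triangle together with $q-1$ pairwise disjoint cycle-edges disjoint from it would be a face of cardinality $2q+1$, exceeding $\dim R=2q$, and such $q-1$ edges exist by a matching estimate on the cycle once $c\ge3$. Next, if $G$ had a chord $\{u,v\}$, then choosing $q-1$ pairwise disjoint cycle-edges avoiding $u$, $v$ and the two cycle-neighbours of $u$ (again possible for $c\ge3$) produces a $2q$-subset $F$ that is a union of $q$ cliques of $G$ but, in $C_{2q+c}$, has $u$ isolated and hence needs $q+1$ cliques; so $\sigma_q\Delta(C_{2q+c})\subsetneq\sigma_q\Delta(G)$. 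But $\sigma_q\Delta(C_{2q+c})$ is a pure $(2q-1)$-dimensional pseudomanifold and $\sigma_q\Delta(G)$ is a homology $(2q-1)$-sphere, so a ridge joining a facet inside $\sigma_q\Delta(C_{2q+c})$ to a facet outside it would lie in at least three facets of $\sigma_q\Delta(G)$ --- a contradiction. Hence $G=C_{2q+c}$.

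I expect the main obstacle to be the reverse implication for $c\ge3$: establishing the identification $\sigma_q\Delta(C_{2q+c})=\partial C(2q+c,2q)$ and then carrying out the binomial bookkeeping that its Stanley--Reisner resolution realizes exactly the numbers in \Cref{DelPezzo}, not merely the correct shape. A secondary technical point is making the matching-number estimates in the $c\ge3$ forward argument uniform in $q$ when $c$ is small (the tightest case being $c=3$), together with the ridge-counting lemma for a strict inclusion of a pure pseudomanifold in a homology sphere of the same dimension.
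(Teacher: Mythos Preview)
Your argument is correct, but the paper's proof is a one-liner: ``It immediately follows from \Cref{thm:main} and \Cref{FBettiTable}.'' For the ``if'' direction, \Cref{FBettiTable} has already computed the Betti tables of the three families (via edge contraction and the Herzog--K\"uhl formula, \Cref{thm:pure}), so no cyclic-polytope identification or binomial bookkeeping is needed---your self-identified main obstacle simply does not arise. For the ``only if'' direction with $c\ge3$ you overlook a trivial shortcut: you correctly extract from \Cref{thm:main} that $C_{2q+c}$ occurs as an \emph{induced} subgraph of $G$, and from the Gorenstein$^*$ condition that $|V(G)|=2q+c$; but an induced subgraph on the full vertex set \emph{is} the graph itself, so $G=C_{2q+c}$ immediately, and your triangle-freeness, matching estimates, and pseudomanifold ridge-counting are all superfluous. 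The same remark applies to $c=1,2$: the degree information recorded in \Cref{FBettiTable} forces the induced obstruction coming from \Cref{thm:main} to have exactly $2q+c$ vertices, whence it coincides with $G$. Your cyclic-polytope route and the codimension-two Gorenstein~$\Rightarrow$~complete-intersection observation are legitimate alternatives that avoid invoking \Cref{FBettiTable}, but they rebuild from scratch what the paper has already packaged there.
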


We emphasize analogues between projective algebraic geometry and combinatorics from the viewpoint of this study. 
In \Cref{Analogue} we list the correspondences between the two areas observed during our investigation. 
It is particularly noteworthy that as highlighted in several papers including \cite{MR2199628}, \cite{MR4441153}, and \cite{choe2022determinantal} taking a general inner projection of $X \subseteq \PP^r$ is a fundamental tool in the study of higher secant varieties $\sigma_q X$. Then contracting an edge of a simplicial complex serves as a counterpart to the general inner projection for our purposes.

\begin{table}[h!]
\centering
\begin{tabular}{c|c}
projective algebraic geometry & combinatorics \\ \hline \hline
projective varieties $X\subseteq\mathbb P^r$ & simplicial complexes $\Delta$ on $\{0,\ldots,r\}$ \\ \hline
homogeneous ideals $I(X)$ & Stanley-Reisner ideals $I(\Delta)$ \\ \hline
varieties of minimal degree & chordal graphs \\ \hline
\cite{MR0741934} & Fr{\"o}berg's theorem \\ \hline \hline
$q$-secant varieties & $q$-secant complexes \\ \hline
varieties of minimal $q$-secant degree & $q$-secant chordal graphs \\ \hline
(cf.\ \cite[Theorems 9.1 and 9.2]{MR2199628}) & \Cref{thm:main} \\ \hline \hline
general inner projections & edge contractions
\end{tabular}
\caption{Analogues between the two areas}
\label{Analogue}
\end{table}

Building on these analogues we explore the combinatorial perspective by investigating the Betti numbers $\beta_{p,q}(\sigma_q\Delta(G))$ and projective dimension of higher secant complexes.
The following theorem establishes a precise combinatorial formula of the Betti numbers.

\begin{Thm}\label{thm:numerical}
Let $G$ be a graph, and suppose that $S(\sigma_q\Delta(G))$ satisfies property $N_{q+1,p-1}$. 
Then, $$\beta_{p,q}(\sigma_q\Delta(G)) = \sum_{s=q}^{p+q-1} \binom{s}{q} |\mathcal{H}_{p+q}^s(G)|,$$
where $\mathcal{H}_n^s(G)$ is the set of induced subgraphs of $G$ with $n$ vertices and $s+1$ connected components.
\end{Thm}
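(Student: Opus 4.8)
The plan is to compute $\beta_{p,q}(\sigma_q\Delta(G))$ with Hochster's formula and then reduce the whole problem to an enumeration of faces, that is, to reduced Euler characteristics. In the grading of \Cref{MFR}, Hochster's formula reads
$$
\beta_{p,q}(\sigma_q\Delta(G))=\sum_{\substack{W\subseteq V(G)\\ |W|=p+q}}\dim_{\Bbbk}\widetilde{H}_{q-1}\bigl((\sigma_q\Delta(G))|_{W};\Bbbk\bigr),
$$
and one has $(\sigma_q\Delta(G))|_{W}=\sigma_q\Delta(G|_{W})$, where $G|_{W}$ is the induced subgraph on $W$. Since any subset of $V(G)$ of size at most $q$ is a union of $q$ singletons, $\sigma_q\Delta(G|_{W})$ contains the full $(q-1)$-skeleton of the simplex on $W$, so $\widetilde{H}_{d}(\sigma_q\Delta(G|_{W}))=0$ for $d\leq q-2$; this is the easy half of the homology vanishing.

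For the other half I would read property $N_{q+1,p-1}$ backwards through Hochster's formula: it says precisely that $\widetilde{H}_{j-1}(\sigma_q\Delta(G|_{W'}))=0$ whenever $|W'|\leq (p-1)+j$ and $j\geq q+1$, and specializing to $|W'|=p+q$ kills $\widetilde{H}_{d}(\sigma_q\Delta(G|_{W}))$ for all $d\geq q$. Hence for $|W|=p+q$ the reduced homology of $\sigma_q\Delta(G|_{W})$ lies entirely in degree $q-1$, so $\dim_{\Bbbk}\widetilde{H}_{q-1}(\sigma_q\Delta(G|_{W}))=(-1)^{q-1}\widetilde{\chi}(\sigma_q\Delta(G|_{W}))$. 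It therefore suffices to prove the combinatorial identity
$$
(-1)^{q-1}\widetilde{\chi}(\sigma_q\Delta(H))=\binom{c}{q}
$$
for every induced subgraph $H$ of $G$ with $p+q$ vertices and $c+1$ connected components (reading $\binom{c}{q}=0$ when $c<q$); summing over $W$ and collecting by the number of components then gives the stated formula. Morally this says $\sigma_q\Delta(H)$ carries the homology of the $(q-1)$-skeleton of a $c$-simplex, as one expects upon collapsing each component of $\Delta(H)$ to a point.

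For the Euler-characteristic identity I would work with $\widehat{f}_{\Delta}(t)=\sum_{F\in\Delta}t^{|F|}$, which is multiplicative on joins and satisfies $\widetilde{\chi}(\Delta)=-\widehat{f}_{\Delta}(-1)$. Writing $H=H_{0}\sqcup\cdots\sqcup H_{c}$ into connected components and tracking, through an auxiliary variable $u$, how many cliques of $H_{i}$ are used to cover the $V(H_{i})$-part of a face, one obtains
$$
\widetilde{\chi}(\sigma_q\Delta(H))=-[u^{q}]\,(1-u)^{c}\prod_{i=0}^{c}\Phi_{i}(u),\qquad \Phi_{i}(u)=\sum_{a\geq 0}\bigl(-\widetilde{\chi}(\sigma_{a}\Delta(H_{i}))\bigr)u^{a},
$$
where $[u^{q}]$ extracts the coefficient of $u^{q}$. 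Each $\Phi_{i}$ has constant term $1$, so once one knows $\widetilde{\chi}(\sigma_{a}\Delta(H_{i}))=0$ for $1\leq a\leq q$, the product is $\equiv 1\pmod{u^{q+1}}$ and the right-hand side collapses to $-[u^{q}](1-u)^{c}=(-1)^{q-1}\binom{c}{q}$, as needed. For $q=1$ the input required is exactly that a connected chordal graph has a contractible clique complex, and $N_{2,p-1}$ is what forces the relevant induced subgraphs to be chordal.

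The heart of the matter is thus this last vanishing: under the hypothesis, $\widetilde{\chi}(\sigma_{a}\Delta(H_{i}))=0$ for $1\leq a\leq q$ whenever $H_{i}$ is a connected component of some $G|_{W}$ with $|W|=p+q$ --- including $H_{i}=G|_{W}$ when the latter is connected, where it amounts to $\sigma_q\Delta(G|_{W})$ being acyclic. The mechanism is that lower secant complexes arise as links: adjoining one isolated vertex $w$ yields $\sigma_q(\Delta\sqcup\{w\})=\sigma_q\Delta\cup(w\ast\sigma_{q-1}\Delta)$, with the two pieces meeting in $\sigma_{q-1}\Delta$ (the link of $w$) and the cone $w\ast\sigma_{q-1}\Delta$ contractible, so Mayer--Vietoris relates $\widetilde{H}_{\bullet}$ of $\sigma_q(\Delta\sqcup\{w\})$, of $\sigma_q\Delta$, and of $\sigma_{q-1}\Delta$. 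Choosing $q-a$ vertices, one from each of $q-a$ distinct other components, realizes $\sigma_{a}\Delta(H_{i})$ as an iterated such link inside $\sigma_q\Delta(H_{i}\sqcup\overline{K_{q-a}})$, an induced subcomplex of $\sigma_q\Delta(G)$ on at most $p+q$ vertices, whose homology is therefore still concentrated in degree $q-1$ by the hypothesis. Iterating the resulting long exact sequences and inducting downward on $q-a$ and on the number of vertices expresses $\widetilde{\chi}(\sigma_{a}\Delta(H_{i}))$ through reduced Euler characteristics of $\sigma_q\Delta$ of strictly smaller induced subgraphs of $G$, each killed by the hypothesis. I expect the delicate part to be exactly the bookkeeping here --- arranging the induction so every complex that appears stays within $p+q$ vertices and only $\sigma_{a}$ with $a\leq q$ is ever needed, while separately disposing of the fully connected case, where there is no isolated vertex to peel off and one must argue directly (via a collapsing/projective-dimension estimate) that $\sigma_q\Delta(G|_{W})$ is acyclic. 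Everything past that point is formal homological algebra, and for $q=1$ the argument degenerates to the classical chordality statement.
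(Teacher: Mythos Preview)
Your generating–function identity
\[
\widetilde{\chi}(\sigma_q\Delta(H))=-[u^{q}]\,(1-u)^{c}\prod_{i=0}^{c}\Phi_{i}(u)
\]
is correct and elegant, and the reduction of $\dim_\Bbbk\widetilde{H}_{q-1}$ to $(-1)^{q-1}\widetilde{\chi}$ via the hypothesis is fine. The gap is in the step you yourself flag as delicate: establishing $\widetilde{\chi}(\sigma_{a}\Delta(H_i))=0$ for the connected components $H_i$ and $1\le a\le q$. Your Mayer--Vietoris mechanism gives $\widetilde{\chi}(\sigma_{a}\Delta(H_i))=\sum_{j=0}^{q-a}(-1)^{j}\binom{q-a}{j}\,\widetilde{\chi}\bigl(\sigma_q\Delta(H_i\sqcup\overline{K_j})\bigr)$, but to feed this back into the induction you need each $H_i\sqcup\overline{K_j}$ to be an induced subgraph of $G$ with strictly fewer than $p+q$ vertices; neither is guaranteed (there may be fewer than $q-a$ other components, and when all other components are singletons the top term is $G|_W$ itself). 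Worse, if you allow abstract isolated vertices, the identity collapses to the tautology $-[u^{a}]\Phi_{H_i}(u)=\widetilde{\chi}(\sigma_a\Delta(H_i))$, so Mayer--Vietoris alone gives no new information. And the vanishing is genuinely false without further input: for $q=2$ take $H_i=C_4$, where $\widetilde{\chi}(\Delta(C_4))=-1$; the hypothesis $N_{3,p-1}$ does not exclude induced $C_4$'s. Your ``collapsing/projective-dimension estimate'' for the connected case is thus not a bookkeeping detail but the entire content of the proof.

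The paper's argument supplies exactly this missing input and bypasses your machinery altogether. It uses the edge-contraction lemma (Corollary~\ref{cor:EdgePreserve}): since $N_{q+1,1}$ forces every minimal nonface of $\sigma_q\Delta(H)$ to be an independent $(q{+}1)$-set, no edge of $H$ lies in one, and contracting any edge preserves $\widetilde{H}_\ast(\sigma_q\Delta(H);\Bbbk)$. Repeating until no edges remain sends $H$ to $\overline{K_{s+1}}$ (with $s+1$ the number of components), and then $\dim_\Bbbk\widetilde{H}_{q-1}(\sigma_q\Delta(\overline{K_{s+1}}))=\binom{s}{q}$ by the explicit computation in Corollary~\ref{cor:PKBetti}. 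This handles connected and disconnected $H$ uniformly in two lines; once you have the contraction lemma, the Euler-characteristic and link apparatus is unnecessary.
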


This formula allows us to combinatorially determine the projective dimension of $S(\sigma_q\Delta(G))$ for $q$-secant chordal graphs (\Cref{cor:pd}). 
Moreover, we precisely characterize the condition involving edges under which the projective dimension is preserved under edge contractions (\Cref{cor:prdim}).

Finally, we provide a classification result for $q$-secant chordal forests $G$ such that $S(\sigma_q\Delta(G))$ are \emph{Cohen–Macaulay}.
In other words, we specify forests $G$ satisfying
$$
\pd S(\sigma_q\Delta(G)) = \codim \sigma_q\Delta(G)
$$
under a mild assumption.
\begin{Thm}\label{thm:CMtrees}
Let $G$ be a forest with $|V(G)| \geq 2q+2$, and assume that it has a matching of size $q$ for an integer $q\geq 2$. 
Then $S(\sigma_q\Delta(G))$ is Cohen-Macaulay if and only if $G$ is a path graph.
\end{Thm}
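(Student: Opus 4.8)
The plan is to reduce the statement to a purely combinatorial identity relating the projective dimension and the codimension to two graph invariants, and then to verify that identity.

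\textbf{Step 1: reductions.} Since a forest $G$ is triangle-free, $\Delta(G)$ is just $G$ viewed as a one-dimensional complex, so a subset $W\subseteq V(G)$ is a face of $\sigma_q\Delta(G)$ exactly when $W$ can be covered by at most $q$ vertices and edges of $G[W]$, i.e. when $|W|-\mu(G[W])\le q$, where $\mu$ denotes the matching number. Thus every face has at most $2q$ vertices (as $|W|-\mu(G[W])\ge\lceil|W|/2\rceil$), while the $2q$ endpoints of a matching of size $q$ do span a face; hence $\dim\sigma_q\Delta(G)=2q-1$ and $\codim\sigma_q\Delta(G)=|V(G)|-2q$. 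Because a forest is chordal, $I(\Delta(G))$ has a $2$-linear resolution by Fr\"oberg's theorem, so $I(\sigma_q\Delta(G))$ has a $(q+1)$-linear resolution by the consequence of \Cref{MainCor} stated right after it; in particular $S(\sigma_q\Delta(G))$ satisfies property $N_{q+1,p}$ for every $p$, so \Cref{thm:numerical} applies with no restriction, and as $|V(G)|\ge 2q+2$ the complex $\sigma_q\Delta(G)$ is not a simplex. Writing $f(G)$ for the minimum number of vertices whose removal leaves at least $q+1$ connected components, the linearity of the resolution turns \Cref{thm:numerical} into $\pd S(\sigma_q\Delta(G))=\max\{\,p\ge 1 : G\text{ has an induced subgraph on }p+q\text{ vertices with }\ge q+1\text{ components}\,\}=|V(G)|-q-f(G)$. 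Since $\pd\ge\codim$ always holds, $S(\sigma_q\Delta(G))$ is Cohen--Macaulay if and only if $f(G)=q$.

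\textbf{Step 2: paths, and a separation lemma.} For a path $P_n$ with $n\ge 2q+1$ one has $f(P_n)=q$: each vertex deletion raises the component count by at most one, and deleting $q$ suitably spaced interior vertices yields $q+1$ components. For the converse I would establish the contrapositive: \emph{if $G$ is a forest on $n\ge 2q+2$ vertices that is not a path, then $f(G)\le q-1$}. The key tool is the following elementary fact about trees: if $S$ is a tree and $X\subseteq V(S)$ is independent with $|X|=c$, then deleting at most $c-1$ vertices (none of them in $X$) separates the vertices of $X$ pairwise. Indeed, root $S$ arbitrarily, let $w_1\in X$ have minimum depth, and delete $\{\operatorname{parent}(w):w\in X\setminus\{w_1\}\}$; a short case check comparing depths shows the unique $S$-path between any two vertices of $X$ then contains a deleted vertex. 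Consequently, for any $c\le\alpha(S)=|V(S)|-\mu(S)$ one can obtain at least $c$ components from a tree $S$ by deleting $c-1$ vertices.

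\textbf{Step 3: the count.} If $G$ is disconnected with components $G_1,\dots,G_m$, $m\ge 2$, and $m\le q$ (if $m\ge q+1$ then $f(G)=0$), pick integers $1\le c_i\le\alpha(G_i)$ with $\sum_i c_i=q+1$; this is possible since $m\le q+1\le\sum_i\alpha(G_i)=\alpha(G)\ge\lceil n/2\rceil\ge q+1$. Applying the lemma inside each $G_i$ deletes $\sum_i(c_i-1)=q+1-m\le q-1$ vertices in all and leaves $\ge q+1$ components. If instead $G$ is a tree with a vertex $v$ of degree $d\ge 3$ and $d\le q$ (if $d\ge q+1$ then deleting $v$ gives $f(G)\le 1$), delete $v$, let $T_1,\dots,T_d$ be the components of $G-v$, observe $\sum_i\alpha(T_i)=\alpha(G-v)\ge\lceil(n-1)/2\rceil\ge q+1$, choose $1\le c_i\le\alpha(T_i)$ with $\sum_i c_i=q+1$, and apply the lemma in each $T_i$; the total number of deleted vertices is $1+\sum_i(c_i-1)=q+2-d\le q-1$, again leaving $\ge q+1$ components. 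Hence $f(G)\le q-1$ for every non-path forest, which finishes the proof.

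\textbf{Where the difficulty lies.} The delicate point is Step 1: extracting from \Cref{thm:numerical} the exact equality $\pd S(\sigma_q\Delta(G))=|V(G)|-q-f(G)$ (and matching it against $\codim$, and checking the complex is not a simplex). After that, Step 3 is a counting argument whose only subtlety is squeezing the bound down to $q-1$ instead of $q$ --- the single deletion saved relative to the naive estimate is exactly what being "not a path" (a disconnection, or a branch vertex) supplies, and the hypotheses $n\ge 2q+2$ and $\alpha(G)\ge\lceil n/2\rceil$ are precisely what keep the count tight.
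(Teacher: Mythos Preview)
Your proof is correct. Both you and the paper reduce, via the $(q+1)$-linear resolution and \Cref{thm:numerical} (equivalently \Cref{cor:pd}/\Cref{Cor:Criterion}), to the criterion that $S(\sigma_q\Delta(G))$ is Cohen--Macaulay iff $f(G)=q$, where $f(G)$ is the minimum number of vertex deletions producing at least $q+1$ components.

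The difference lies in the converse. The paper first invokes \Cref{lem:components} for the disconnected case, then for a tree uses \Cref{Lem:UsefulMatching} (proved by a maximality argument on paths) to locate two leaves $\ell_1,\ell_2$ outside a size-$q$ matching, and shows that a branch vertex on the path between them would allow one to assemble a deletion set of size $q-1$ by splitting the matching across the three pieces of $T-b$. You instead prove a clean separation lemma (an independent set of size $c$ in a tree can be pairwise separated by deleting at most $c-1$ parents), and then run a single counting argument distributing $q+1$ over the $\alpha(G_i)$'s, which handles the disconnected case ($m\ge 2$ saves one deletion) and the branch-vertex case ($d\ge 3$ saves one deletion) uniformly. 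Your approach is more self-contained and avoids the auxiliary matching lemma; the paper's approach keeps the matching hypothesis visibly in play and reuses more of the earlier machinery. Either way the hypotheses $q\ge 2$ and $|V(G)|\ge 2q+2$ enter exactly where you note: to make $q+2-d\le q-1$, $1\le q-1$, and $\alpha\ge\lceil n/2\rceil\ge q+1$.
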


In this context, the path graph on $r+1\geq 2q+2$ vertices is a genuine counterpart of the rational normal curve $C\subset\mathbb P^r$ of degree $r$: the $q$-secant variety $\sigma_qC\subset\mathbb P^r$ has minimal degree, hence $S(\sigma_qC)$ is automatically Cohen-Macaulay \cite[Theorem 1.1]{MR4441153}, and there are no other such projective irreducible curves \cite[Theorem 6.1]{MR2199628}.

This article is organized as follows. 
In \Cref{Sec:Prelim} we introduce necessary notions and review known facts.
In \Cref{Sec:Reg2} we explore higher secant complexes to clique complexes of chordal graphs.
For the families $\mathcal{F}_{q,1}$ and $\mathcal{F}_{q,2}$ playing a central role, their properties are described in \Cref{Sec:special}.
In \Cref{Sec:Main} we state and prove our main results concerning property $N_{q+1,p}$ of $S(\sigma_q \Delta)$ for clique complexes $\Delta$.
In \Cref{Sec:CM} we discuss numerical information of higher secant chordal graphs
and the Cohen-Macaulayness of their higher secant complexes.
In \Cref{Sec:Problems} we present some open problems arising from our study.
\bigskip

\textbf{Acknowledgments}.
J.\ Choe is supported by a KIAS Individual Grant (MG083302) at Korea Institute for Advanced Study. 
J.\ Jung was supported by the Institute for Basic Science (IBS-R032-D1-a00), and has been supported since March 2025 by the National Research Foundation of Korea (NRF) grant funded by the Korean government(MSIT) (RS-2024-00414849).

\bigskip 

\section{Preliminaries}\label{Sec:Prelim}
 We begin this section with introducing some notations that are fixed throughout this article.
\begin{Notation}
\phantom{}
    \begin{enumerate}
    \item $S=\Bbbk[x_0,x_1,\ldots,x_r]$ is a polynomial ring over a field $\Bbbk$ with the standard grading, that is, $\deg x_i = 1$ for any $0\leq i \leq r$.
    \item Given $S=\Bbbk[x_0,x_1,\ldots,x_r]$ we associate to a subset $W\subseteq\{0,1,\ldots,r\}$ a squarefree monomial $x_W=\prod_{i\in W}x_i$.
    \item $\beta_{i,j}(\Delta) := \beta_{i,j}(S(\Delta))$, where $S(\Delta)$ is the Stanley-Reisner ring of the complex $\Delta$.
    \item $K_n$ is the complete graph on $n$ vertices.
    \item $C_n$ is the cycle graph on $n$ vertices.
    \item $P_n$ is the path graph on $n$ vertices.
    \item $\overline{G}$ is the complement of a graph $G$.
    \end{enumerate}
\end{Notation}

\subsection{Graded Betti numbers}
In this subsection we review numerical information and vanishing properties obtained from minimal free resolutions.

\begin{Def}
Given a finitely generated graded $S$-module $M$, \emph{graded Betti numbers} of $M$ are defined to be
$$
\beta_{i,j}=\beta_{i,j}(M)=\dim_\Bbbk\Tor_i^S(M,\Bbbk)_{i+j}
$$
for integers $i,j$, and the \emph{Betti table} of $M$ is defined to be the following diagram.
$$
\begin{array}{c|cccc}
     & \cdots & i & i+1 & \cdots \\ \hline
    \vdots & & \vdots & \vdots & \\
    j & \cdots & \beta_{i,j} & \beta_{i+1,j} & \cdots \\
    j+1 & \cdots & \beta_{i,j+1} & \beta_{i+1,j+1} & \cdots \\
    \vdots & & \vdots & \vdots
\end{array}
$$
For brevity we omit entries if they are zero.
\end{Def}

By nature of Tor functors the graded Betti numbers $\beta_{i,j}=\beta_{i,j}(M)$ fit in the minimal free resolution of $M$ depicted as in \Cref{MFR}.

\begin{Def}
Let $M$ be a finitely generated graded $S$-module. 
\begin{enumerate}
    \item $M$ satisfies \emph{property $N_{d,p}$} for integers $d$ and $p$ if 
    $$
    \beta_{i,j}(M)=0\quad\textup{when}\quad i\leq p \textup{ and } j\geq d.
    $$
    \item The \emph{Castelnuovo-Mumford regularity} of $M$ is defined to be 
    $$
    \reg M = \max\{j\in\mathbb Z : \beta_{i,j}(M)\neq0\textup{ for some }i\geq0\}.
    $$
    \item The \emph{projective dimension} of $M$ is defined to be
    $$
    \pd M=\max\{i\geq 0:\beta_{i,j}(M)\neq0\textup{ for some } j \in \mathbb{Z}\}.
    $$
\end{enumerate}
\end{Def}

We remark that when $I\subsetneq S$ is a homogeneous ideal without forms of degree $q$ for an integer $q\geq 1$, the quotient ring $S/I$ satisfies property $N_{q+1,p}$ for an integer $p\geq 0$ if and only if the first $p+1$ free modules of its minimal free resolution are
$$
S^{\beta_{i,j}(S/I)}(-i-j)
$$
with $0\leq i\leq p$, and if it is the case for every $p\geq0$, then $I$ has \emph{$(q+1)$-linear} resolution, which means that $\beta_{i,j}(I)=0$ for any $i\geq 0$ and $j\neq q+1$.

In order to simplify an argument to appear we recall a formula of Herzog and K{\"u}hl.

\begin{Thm}[{\cite[Theorem 1]{MR743307}}]\label{thm:pure}
Let $M$ be a finitely generated graded $S$-module, and assume that the $S$-module $M$ has pure resolution, that is, for an integer $p\geq 0$ there exist integers $j_0,\ldots,j_p$ such that $\beta_{0,j_0}(M),\ldots,\beta_{p,j_p}(M)$ are the only nonzero graded Betti numbers of $M$. 
Then we have
$$
\beta_{i,j_i}(M)=\left|\prod_{k\neq 0,i}\frac{j_k-j_0+k}{j_k-j_i+k-j}\right|\cdot\beta_{0,j_0}(M)
$$
for all $0\leq i\leq p$ if and only if $M$ is Cohen-Macaulay, that is, $\pd M=\codim M$.
\end{Thm}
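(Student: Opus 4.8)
The plan is to read off everything from the Hilbert series of $M$ and to reduce both implications to a single Vandermonde-type linear-algebra fact. Write the pure minimal free resolution as $0\to F_p\to\cdots\to F_1\to F_0\to M\to0$ with $F_i=S^{b_i}(-a_i)$, where $b_i=\beta_{i,j_i}(M)$ and $a_i=i+j_i$; minimality forces the entries of each differential to lie in the maximal ideal, so $a_0<a_1<\cdots<a_p$ are distinct and strictly increasing. First I would compute the Hilbert series in the form $H_M(t)=K(t)/(1-t)^{r+1}$ with numerator $K(t)=\sum_{i=0}^p(-1)^i b_i\,t^{a_i}$. By the standard description of Hilbert series of graded modules, the order of the pole of $H_M(t)$ at $t=1$ equals $\dim M$, so $K(t)=(1-t)^c\,Q(t)$ with $c=\codim M=r+1-\dim M$ and $Q(1)=\pm e(M)\neq0$; that is, the vanishing order of $K$ at $t=1$ is \emph{exactly} $c$. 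This is the one genuinely external input, and it is where the invariant $\codim M$ enters.

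Next I would convert this analytic statement into linear conditions on the $b_i$. Since the falling factorials and the ordinary powers are related by an invertible triangular change of basis, the vanishing $K^{(\ell)}(1)=0$ for $\ell<c$ is equivalent to the power-sum identities $\sum_{i=0}^p(-1)^i b_i\,a_i^{\ell}=0$ for $0\le\ell\le c-1$, while $\sum_{i=0}^p(-1)^i b_i\,a_i^{\,c}\neq0$. I would also record the elementary bound $c\le p$ (from $\pd M\ge\codim M$, i.e.\ Auslander--Buchsbaum together with $\depth M\le\dim M$), so that $M$ is Cohen--Macaulay precisely when $c=p$.

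For the backward direction (Cohen--Macaulay $\Rightarrow$ formula) I would set $c=p$, so the above gives $p$ homogeneous linear equations $\sum_i(-1)^i b_i a_i^{\ell}=0$ ($0\le\ell\le p-1$) in the $p+1$ unknowns $(-1)^i b_i$. The coefficient matrix $(a_i^{\ell})$ is a full-rank $p\times(p+1)$ Vandermonde matrix (the $a_i$ being distinct), so its kernel is one-dimensional; by the divided-difference identity $\sum_i a_i^{\ell}\big/\prod_{k\neq i}(a_i-a_k)=0$ for $\ell<p$, the solution is $(-1)^i b_i=\lambda\big/\prod_{k\neq i}(a_i-a_k)$ for a single scalar $\lambda$. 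Normalizing at $i=0$ (so $\lambda=\beta_{0,j_0}\prod_{k\neq0}(a_0-a_k)$), factoring out the common term $a_i-a_0$ from numerator and denominator, and substituting $a_k-a_0=j_k-j_0+k$ and $a_i-a_k=(j_i-j_k)+(i-k)$, I would take absolute values to obtain exactly the stated product over $k\neq0,i$. For the forward direction I would run this computation in reverse: if the displayed formula holds for every $i$, then $(-1)^i b_i$ is the Vandermonde null vector, whence $\sum_i(-1)^i b_i a_i^{\ell}=0$ for \emph{all} $\ell\le p-1$; were $M$ not Cohen--Macaulay we would have $c<p$, so $c\le p-1$ and in particular $\sum_i(-1)^i b_i a_i^{\,c}=0$, contradicting the exactness of the vanishing order established above. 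Hence $c=p$ and $M$ is Cohen--Macaulay.

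I expect the main obstacle to be bookkeeping rather than conceptual: pinning down that the pole order at $t=1$ is exactly $\codim M$ (not merely at least), and then carefully matching signs and absolute values when rewriting the symmetric Vandermonde solution as the one-sided product $\prod_{k\neq0,i}$ appearing in the statement --- in particular recognizing that the $-j$ in the printed denominator must read $-i$, since the relevant factor is $a_i-a_k=(j_i-j_k)+(i-k)$.
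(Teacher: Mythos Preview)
The paper does not prove this theorem: it is quoted verbatim as \cite[Theorem~1]{MR743307} (Herzog--K\"uhl) and used only as a black box, for instance in \Cref{cor:PKBetti} and \Cref{FBettiTable}. So there is no ``paper's own proof'' to compare against.

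Your argument is correct and is essentially the original Herzog--K\"uhl proof: read off the Hilbert series numerator $K(t)=\sum_i(-1)^ib_it^{a_i}$ from the pure resolution, use that the order of vanishing of $K$ at $t=1$ is exactly $c=\codim M$, translate this into the power-sum equations $\sum_i(-1)^ib_ia_i^{\ell}=0$ for $\ell<c$ via the triangular change between falling factorials and monomials, and then invoke the one-dimensionality of the kernel of the $p\times(p+1)$ Vandermonde matrix when $c=p$. The converse is handled cleanly by the observation that the displayed formula forces $(-1)^ib_i$ to be (a scalar multiple of) the Vandermonde null vector, whence $K$ vanishes to order at least $p$ at $t=1$, so $c\ge p$ and hence $c=p$. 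Your sign bookkeeping for the converse is fine: since the $a_i$ are strictly increasing, the null vector $v_i=\prod_{k\neq i}(a_i-a_k)^{-1}$ has sign $(-1)^{p-i}$, so $(-1)^ib_i$ with $b_i>0$ is automatically a positive multiple of $(-1)^pv_i$. You are also right that the printed denominator ``$j_k-j_i+k-j$'' must be a typo for ``$j_k-j_i+k-i$'', since the intended factor is $a_k-a_i$.
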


\subsection{Simplicial complexes and the Stanley-Reisner correspondence}\label{subsec:StanleyReisner}
In this subsection we review (finite abstract) simplicial complexes and their algebraic aspects.
A \emph{simplicial complex} on a finite set $V$ is a collection of subsets of $V$ that is closed under taking subsets.
That is, for any $F \in \Delta$, if $G \subseteq F$, then $G \in \Delta$. 
We assume a nondegeneracy setting that all singletons in $V$ are contained in $\Delta$. 
Any element of $V$ is called a \emph{vertex} of $\Delta$, the set $V$ is called the \emph{vertex set} of $\Delta$, and the members of $\Delta$ are called \emph{faces}. 
An \emph{edge} is a face with only two vertices. 
Faces that are maximal with respect to inclusion are called \emph{facets}. 
A \emph{simplex} on $V$ is the simplicial complex $2^V$, the power set of $V$.
For a vertex $v$ of $\Delta$ if no edges contain $v$, then $v$ is called \emph{isolated}.
For a face $F$ of $\Delta$ we define the \emph{dimension} of $F$ by $\dim F =|F|-1$ and the \emph{dimension} of $\Delta$ by the maximal dimension of faces of $\Delta$.
A \emph{graph} $G$ is a simplicial complex of dimension at most one, and we write $G=(V(G),E(G))$, where $E(G)$ is the set of edges in $G$.
For a subset $W\subseteq V$ the \emph{induced subcomplex} of $\Delta$ on $W$, denoted by $\Delta[W]$, is the simplicial complex on $W$ whose faces are precisely those of $\Delta$ that are contained in $W$. 

Now we recall the \emph{Stanley-Reisner correspondence}. Suppose the following.
\begin{enumerate}
    \item $\Delta$ is a simplicial complex with the vertex set $V=\{0,1,\ldots,r\}$.
    \item $I \subseteq S = \Bbbk[x_0, x_1, \ldots, x_r]$ is a squarefree monomial ideal with no linear generators.
    \item $Z\subseteq\mathbb P^r$ is a nondegenerate coordinate subspace arrangement.
\end{enumerate}
Then $I$ is called the \emph{Stanley-Reisner ideal} of $\Delta$ if it is equal to
$$
I(\Delta)=(x_W:W\subseteq V\textup{ is not a face of }\Delta)\subseteq S.
$$ 
Conversely, $\Delta$ is called the \emph{Stanley-Reisner complex} of $I$ if it is given as
$$
\{F\subseteq V:x_F\not\in I\}.
$$
% of $I$. 
This coupling between $I$ and $\Delta$ is known as the \emph{Stanley-Reisner correspondence}.
In addition, the ideal-variety correspondence enables us to compare $Z$ with $I$ and so with $\Delta$.

$$
\begin{tikzcd}[column sep=large]
        \Delta \ar[r,"I(\Delta)",mapsto,shift left] & I \ar[l,"\Delta(I)",mapsto,shift left] \ar[r,leftrightarrow] & Z
\end{tikzcd}
$$

One important aspect of the Stanley--Reisner correspondence is that topological data of a simplicial complex $\Delta$ are encoded in algebraic data of the Stanley-Reisner ideal $I(\Delta)$, namely Hochster's formula \cite{MR0441987}. We adopt its homological version.

\begin{Thm}\cite[Corollary 5.12 and Corollary 1.40]{MR2110098}\label{thm:Homology}
    Let $\Delta$ be a simplicial complex on $V$. Then we have 
    $$
     \beta_{i,j}(\Delta) = \sum_{\substack{W\subseteq V, \\ |W|=i+j}} \dim_\Bbbk \widetilde{H}_{j-1}(\Delta[W],\Bbbk)
    $$ 
    for all integers $i$ and $j$ with $i+j>0$.
\end{Thm}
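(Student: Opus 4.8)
The plan is to compute the multigraded modules $\Tor_i^S(S(\Delta),\Bbbk)$ directly and read off the stated reduced-homology description in each squarefree multidegree. First I would equip $S$ with its fine $\mathbb{Z}^{r+1}$-grading and note that, since $I(\Delta)$ is generated by squarefree monomials, $S(\Delta)$ is a squarefree module: its component in multidegree $\mathbf{a}\in\mathbb{Z}^{r+1}$ is $\Bbbk$ when $\mathbf{a}=\mathbf{1}_F$ for a face $F\in\Delta$ and $0$ otherwise, where $\mathbf{1}_F$ denotes the $0/1$ indicator vector of $F$. It follows formally that each $\Tor_i^S(S(\Delta),\Bbbk)$ is concentrated in squarefree multidegrees $\mathbf{1}_W$, $W\subseteq V$, so collecting the contributions of total internal degree $i+j$ is exactly summing over all $W$ with $|W|=i+j$. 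This accounts for the outer sum in the asserted formula, and reduces the theorem to computing the single multidegree $\mathbf{1}_W$.

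To carry out that computation I would resolve $\Bbbk$ by the Koszul complex $K_\bullet=K_\bullet(x_0,\dots,x_r;S)$, whose $p$-th term is $\bigoplus_{|\sigma|=p}S\cdot e_\sigma$ with $e_\sigma$ in multidegree $\mathbf{1}_\sigma$ and differential $e_\sigma\mapsto\sum_k\pm\,x_{i_k}\,e_{\sigma\setminus\{i_k\}}$, so that $\Tor_i^S(S(\Delta),\Bbbk)=H_i(K_\bullet\otimes_S S(\Delta))$. Restricting the tensored complex to the multidegree $\mathbf{1}_W$, its $p$-th term acquires the $\Bbbk$-basis consisting of those $e_\sigma$ with $\sigma\subseteq W$, $|\sigma|=p$, and $W\setminus\sigma\in\Delta$, because $(S(\Delta))_{\mathbf{1}_{W\setminus\sigma}}=\Bbbk$ precisely when $W\setminus\sigma$ is a face of $\Delta$. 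Re-indexing each basis element by the complementary face $\tau=W\setminus\sigma\in\Delta[W]$, of cardinality $|W|-p$, identifies this strand of $K_\bullet\otimes_S S(\Delta)$ with the set of faces of the induced subcomplex $\Delta[W]$.

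The heart of the argument is to verify that under the substitution $\tau=W\setminus\sigma$ the Koszul differential becomes (up to a fixed global sign) the reduced simplicial coboundary of $\Delta[W]$: it sends the face $\tau$ to the alternating sum of the faces $\tau\cup\{i_k\}$ that still lie in $\Delta[W]$, killing the remaining terms. Hence, with the index shift $p\mapsto|W|-p-1$, one gets $H_p\big((K_\bullet\otimes_S S(\Delta))_{\mathbf{1}_W}\big)\cong\widetilde{H}^{\,|W|-p-1}(\Delta[W];\Bbbk)$, where the augmentation coming from the term $\tau=\emptyset$ makes this the reduced theory including the $(-1)$-dimensional face. Since $\Bbbk$ is a field, reduced cohomology and reduced homology have equal dimension in each degree, so I may replace $\widetilde{H}^{\,|W|-p-1}$ by $\widetilde{H}_{|W|-p-1}$. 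Setting $i=p$ and using $|W|=i+j$, the shift reads $|W|-i-1=j-1$, whence $\dim_\Bbbk\Tor_i^S(S(\Delta),\Bbbk)_{\mathbf{1}_W}=\dim_\Bbbk\widetilde{H}_{j-1}(\Delta[W];\Bbbk)$; summing over all $W$ with $|W|=i+j$ and recalling $\beta_{i,j}(\Delta)=\dim_\Bbbk\Tor_i^S(S(\Delta),\Bbbk)_{i+j}$ yields the claim for every $i,j$ with $i+j>0$.

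I expect the main obstacle to be the bookkeeping in the middle step, rather than any conceptual difficulty: one must pin down orientation conventions so that the Koszul sign pattern translates exactly into the alternating-sum (co)boundary, confirm that the reindexing turns the homological direction of $K_\bullet$ into the coboundary direction on $\Delta[W]$ (so that reduced \emph{cohomology} appears before passing to dimensions), and check that the extreme terms $\tau=\emptyset$ and $\tau=W$ are handled so that the complex is genuinely the augmented one. Once these sign and augmentation issues are settled, the reduction to squarefree multidegrees and the passage to reduced homology are purely formal.
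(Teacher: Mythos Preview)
The paper gives no proof of this statement; it is quoted as \cite[Corollary 5.12 and Corollary 1.40]{MR2110098} and used as a black box. Your proposal is precisely the standard Koszul-complex proof of Hochster's formula found in that reference, so in spirit you are reproducing the cited source rather than diverging from the paper.

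One correction is needed. Your description of the fine-graded pieces of $S(\Delta)$ is wrong: $(S(\Delta))_{\mathbf a}$ is $\Bbbk$ whenever the \emph{support} of $\mathbf a\in\mathbb N^{r+1}$ is a face of $\Delta$, not only when $\mathbf a$ is itself a $0/1$ vector. For instance $x_0^2$ survives in $S(\Delta)$. Consequently the sentence ``It follows formally that each $\Tor_i^S(S(\Delta),\Bbbk)$ is concentrated in squarefree multidegrees'' is not justified by what precedes it. The conclusion is still true, but you must argue it differently: e.g.\ if $a_j\ge 2$ for some $j$, then multiplication by $x_j$ gives a chain homotopy showing that $(K_\bullet\otimes_S S(\Delta))_{\mathbf a}$ is acyclic, or else invoke that a minimal free resolution of a squarefree module lives in squarefree multidegrees. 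Once this reduction is repaired, the remainder of your argument---identifying the multidegree-$\mathbf 1_W$ strand of the Koszul complex with the augmented cochain complex of $\Delta[W]$ and then passing from cohomology to homology over a field---is correct and is exactly the textbook route.
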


A closed subscheme $Z\subseteq\mathbb P^r$ is called \emph{$(q+1)$-regular} for an integer $q\geq 1$ if its homogeneous ideal $I(Z)$ satisfies $\reg I(Z)\leq q+1$. 
This notion is generalized and interpreted for simplicial complexes as follows.

\begin{Def}
Let $\Delta$ be a simplicial complex on $V$. Then one says that $\Delta$ is \emph{$(q+1)$-regular} for an integer $q\geq 1$ if one of the following equivalent conditions holds.
\begin{enumerate}
    \item $\reg I(\Delta)\leq q+1$.
    \item $\widetilde{H}_j(\Delta[W],\Bbbk)=0$ for all nonempty subsets $W\subseteq V$ and all integers $j\geq q$.
\end{enumerate}
\end{Def}

\subsection{Higher secant complexes}
Given a nondegenerate reduced subscheme $Z\subseteq\mathbb P^r$ and an integer $q\geq 1$ the \emph{$q$-secant variety} to $Z\subseteq\mathbb P^r$ is defined to be the Zariski closure
$$ 
\sigma_qZ=\overline{\bigcup_{z_i\in Z}\langle z_1,\ldots,z_q\rangle}\subseteq\mathbb P^r, 
$$
where $\langle z_1,\ldots,z_q\rangle\subseteq\mathbb P^r$ is the subspace of $\mathbb P^r$ spanned by $z_1,\ldots,z_q \in Z$.

If $Z\subseteq\mathbb P^r$ is a nondegenerate coordinate subspace arrangement, then so is its $q$-secant variety $\sigma_q Z$. Then the Stanley-Reisner complex assigned to $I(\sigma_qZ)$ can be obtained via \emph{embedded join}.

\begin{Def}
For finite subsets $V_1$ and $V_2$ in a set $V$ let $\Delta_1$ and $\Delta_2$ be simplicial complexes on $V_1$ and on $V_2$, respectively. Then the \emph{embedded join} of $\Delta_1$ and $\Delta_2$ is a simplicial complex on $V_1\cup V_2\subseteq V$ defined to be
$$ 
\Delta_1\ast\Delta_2=\{F_1\cup F_2\subseteq V_1\cup V_2:F_1\in\Delta_1\text{ and }F_2\in\Delta_2\}. 
$$
For a simplicial complex $\Delta$ the \emph{$q$-secant complex} to $\Delta$ is defined to be
$$
\sigma_q\Delta=\underbrace{\Delta\ast\cdots\ast\Delta}_{q \text{ times}}=\{F_1\cup\cdots\cup F_q\subseteq V(\Delta):F_i\in\Delta\}. 
$$
\end{Def}

The statement just above is formally written as follows.

\begin{Prop}\cite[Corollary 3.3.]{MR1749874}
    Let $\Delta_1$ and $\Delta_2$ be simplicial complexes on the same vertex set. Then $I(\Delta_1 \ast \Delta_2)$ is the join algebra \cite[Definition 1.1]{MR1749874} of $I(\Delta_1)$ and $I(\Delta_2)$. In particular, for a simplicial complex $\Delta$ on $r+1$ vertices and its corresponding coordinate subspace arrangement $Z\subseteq\mathbb P^r$ we have 
    $$
    I(\sigma_q \Delta)=I(\sigma_q Z)
    $$
    for all $q\geq 1$.
\end{Prop}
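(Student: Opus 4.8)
The proposition is taken from the cited reference; the plan is to give a short self-contained proof, after which the ``in particular'' clause drops out. Recall that the join algebra of ideals $I_1,I_2\subseteq S=\Bbbk[x_0,\dots,x_r]$ is obtained by taking two further copies $y=(y_0,\dots,y_r)$, $z=(z_0,\dots,z_r)$ of the variables, forming
\[
J=I_1(y)+I_2(z)+(x_i-y_i-z_i:0\le i\le r)\subseteq\Bbbk[x,y,z],
\]
and setting $I_1\ast I_2=J\cap S$. The first step is to observe that when $I_1=I(\Delta_1)$ and $I_2=I(\Delta_2)$ are Stanley--Reisner ideals, the torus $(\Bbbk^\times)^{r+1}$ acting by $x_i\mapsto t_ix_i$, $y_i\mapsto t_iy_i$, $z_i\mapsto t_iz_i$ stabilizes $J$: the monomial ideals $I_1(y)$, $I_2(z)$ are obviously stable, and each $x_i-y_i-z_i$ is scaled by $t_i$. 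Since $S$ is a torus-stable subring of $\Bbbk[x,y,z]$ on which the action has pairwise distinct monomial weights, $I_1\ast I_2=J\cap S$ is a monomial ideal, and it remains to compare monomials.

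For $I(\Delta_1\ast\Delta_2)\subseteq I_1\ast I_2$: if $W\notin\Delta_1\ast\Delta_2$ then, by the elementary equivalence that $W\in\Delta_1\ast\Delta_2$ exactly when $W$ admits a partition $W=A\sqcup B$ with $A\in\Delta_1$ and $B\in\Delta_2$, every partition $W=A\sqcup B$ has $A\notin\Delta_1$ or $B\notin\Delta_2$, i.e.\ $y_A\in I_1(y)$ or $z_B\in I_2(z)$. Expanding $\prod_{i\in W}(y_i+z_i)=\sum_{W=A\sqcup B}y_Az_B$, every term lies in $I_1(y)+I_2(z)$; combining with the telescoping identity $\prod_{i\in W}x_i-\prod_{i\in W}(y_i+z_i)\in(x_i-y_i-z_i:i)$ gives $x_W\in J\cap S=I_1\ast I_2$.

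For the reverse inclusion, let $x^M$ be a monomial of $I_1\ast I_2$ and put $W=\operatorname{supp}(M)$. Reduction modulo $(x_i-y_i-z_i:i)$ carries $x^M$ to $\prod_i(y_i+z_i)^{M_i}$, which therefore lies in the monomial ideal $I_1(y)+I_2(z)$ of $\Bbbk[y,z]$; since membership in a monomial ideal is detected monomial by monomial, every monomial occurring in $\prod_i(y_i+z_i)^{M_i}$ lies in $I_1(y)+I_2(z)$, and a monomial $uv$ with $u$ in the $y$'s and $v$ in the $z$'s lies there only if $u\in I_1(y)$ or $v\in I_2(z)$. If $W$ were a face of $\Delta_1\ast\Delta_2$, fix a partition $W=A\sqcup B$ with $A\in\Delta_1$ and $B\in\Delta_2$; then the monomial $\big(\prod_{i\in A}y_i^{M_i}\big)\big(\prod_{i\in B}z_i^{M_i}\big)$ occurs in the expansion, but its $y$-part has support $A\in\Delta_1$, hence lies in no minimal generator of $I_1(y)$, and its $z$-part has support $B\in\Delta_2$, hence likewise for $I_2(z)$ --- a contradiction. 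Hence $W\notin\Delta_1\ast\Delta_2$, so $x^M\in I(\Delta_1\ast\Delta_2)$, and we conclude $I_1\ast I_2=I(\Delta_1\ast\Delta_2)$. I expect this reverse inclusion to be the only real obstacle: a priori $I_1\ast I_2$ is merely an elimination ideal, so one must rule out both non-monomial elements (handled by the torus symmetry) and monomials supported on faces (handled by exhibiting one surviving monomial that neither $I_1(y)$ nor $I_2(z)$ can absorb).

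For the ``in particular'': since $\ast$ on complexes is associative, iterating the equality just proved gives $I(\sigma_q\Delta)=I(\Delta)\ast\cdots\ast I(\Delta)$, the $q$-fold join algebra of $I(\Delta)$, which by the defining property of the join construction is $I(\sigma_qZ)$; alternatively, and avoiding any hypothesis on $\Bbbk$, one checks directly that a span $\langle z_1,\dots,z_q\rangle$ with $z_j$ in the coordinate subspace on a facet $F_j$ of $\Delta$ lies in, and as the $z_j$ vary fills out, the coordinate subspace on $F_1\cup\dots\cup F_q$, so that $\sigma_qZ$ is the finite (hence Zariski closed) union of the coordinate subspaces on the faces of $\sigma_q\Delta$ --- that is, the coordinate subspace arrangement associated with $\sigma_q\Delta$ --- and the Stanley--Reisner correspondence yields $I(\sigma_qZ)=I(\sigma_q\Delta)$.
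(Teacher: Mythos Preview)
Your proof is correct and self-contained; the paper, by contrast, offers no proof at all --- the proposition is simply quoted as \cite[Corollary 3.3]{MR1749874} and used as a black box. So there is nothing to compare against in the paper itself.

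One minor remark: your torus argument for monomiality implicitly wants $\Bbbk$ infinite (so that torus-stability forces decomposition into weight spaces). The cleaner phrasing, valid over any field, is that $J$ is homogeneous for the $\mathbb Z^{r+1}$-grading with $\deg x_i=\deg y_i=\deg z_i=e_i$, hence so is $J\cap S$, and in $S$ each multidegree is one-dimensional. This is what your torus language encodes, but spelling it out as a grading avoids any quibble about the ground field. Everything else --- the partition reformulation of $\Delta_1\ast\Delta_2$, the expansion of $\prod_{i\in W}(y_i+z_i)$, the reduction modulo $(x_i-y_i-z_i)$ landing in $I_1(y)+I_2(z)\subseteq\Bbbk[y,z]$, and the extraction of a single surviving monomial for the reverse inclusion --- is on the mark.
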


As its consequence $\sigma_qZ$ shares known results with $\sigma_q\Delta$. Useful examples are the following.

\begin{Thm}[cf.\ {\cite[Theorem 1.2]{MR2541390}}]\label{thm:Prolong}
Let $\Delta$ be a simplicial complex with the vertex set $V=\{0,1,\ldots,r\}$ and $q\geq 1$ be an integer.
Then the graded pieces of $I(\sigma_q\Delta)$ in degrees $q$ and $q+1$ are $I(\sigma_q\Delta)_q=0$ and
$$
I(\sigma_q\Delta)_{q+1}=\langle x_W:W\subseteq V,\textup{ }|W|=q+1,\textup{ and }\Delta[W]\textup{ has no edges}\rangle.
$$
\end{Thm}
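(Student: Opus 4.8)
The plan is to argue entirely on the combinatorial side, using that $I(\sigma_q\Delta)$ is by definition the squarefree monomial ideal $(x_W:W\subseteq V,\ W\notin\sigma_q\Delta)$. The first thing I would record is that a monomial $x^{\mathbf{a}}\in S$ lies in $I(\sigma_q\Delta)$ precisely when its support $\operatorname{supp}(x^{\mathbf{a}})$ is not a face of $\sigma_q\Delta$; this is immediate from the definition together with the fact that the family of non-faces of a simplicial complex is closed under taking supersets. Hence computing $I(\sigma_q\Delta)_q$ and $I(\sigma_q\Delta)_{q+1}$ amounts to deciding, for subsets $W\subseteq V$ with $|W|\le q+1$, whether $W\in\sigma_q\Delta$. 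If $|W|\le q$, then padding with the empty face (which is a face by the nondegeneracy hypothesis) we may write $W=\bigl(\bigcup_{w\in W}\{w\}\bigr)\cup\varnothing\cup\cdots\cup\varnothing$ as a union of $q$ faces of $\Delta$; thus $W\in\sigma_q\Delta$. This at once gives $I(\sigma_q\Delta)_q=0$, and it also shows that no degree-$(q+1)$ monomial with support of size at most $q$ lies in $I(\sigma_q\Delta)$, so that $I(\sigma_q\Delta)_{q+1}$ is spanned by the squarefree monomials $x_W$ with $|W|=q+1$ and $W\notin\sigma_q\Delta$.

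It therefore remains to show that for $W\subseteq V$ with $|W|=q+1$ one has $W\in\sigma_q\Delta$ if and only if $\Delta[W]$ has an edge. For the ``if'' direction, an edge $\{u,v\}$ of $\Delta[W]$ is a face of $\Delta$, and the expression $W=\{u,v\}\cup\bigcup_{w\in W\setminus\{u,v\}}\{w\}$ exhibits $W$ as a union of $1+(q-1)=q$ faces of $\Delta$, so $W\in\sigma_q\Delta$. For the ``only if'' direction, write $W=F_1\cup\cdots\cup F_q$ with all $F_i\in\Delta$; since $q+1=|W|\le\sum_i|F_i|$, some $F_i$ must have $|F_i|\ge 2$, and any two-element subset of such an $F_i$ is a face of $\Delta$ contained in $W$, hence an edge of $\Delta[W]$. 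Combining this equivalence with the reduction from the previous paragraph yields $I(\sigma_q\Delta)_{q+1}=\langle x_W:W\subseteq V,\ |W|=q+1,\ \Delta[W]\text{ has no edges}\rangle$, which is the assertion.

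I do not anticipate a genuine obstacle here: the statement is a combinatorial counterpart of a classical fact about prolongations of ideals (cf.\ \cite[Theorem 1.2]{MR2541390}), and once it is phrased in terms of faces it collapses to the pigeonhole count above. The only points that require a little care are the padding-by-empty-faces step, which is what makes the argument valid for every $|W|<q$ and not merely for $|W|=q$, and the strict inequality $q+1>q$ that drives the pigeonhole step in the ``only if'' direction.
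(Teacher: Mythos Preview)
Your proof is correct and follows essentially the same approach as the paper: both arguments reduce to the combinatorial observation that any $W$ with $|W|\le q$ is automatically a face of $\sigma_q\Delta$, while a $(q+1)$-element $W$ is a face if and only if $\Delta[W]$ contains an edge. The paper's proof is a one-sentence sketch of exactly this pigeonhole reasoning, so your version is simply a more explicit rendering of the same idea.
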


\begin{proof}
    Let $W\subseteq V$ be a subset. Observe that it must be a face of $\sigma_q\Delta$ when $|W|=q$ and that when $|W|=q+1$, it is a face of $\sigma_q\Delta$ if and only if $\Delta$ has at least one edge of two vertices in $W$.
\end{proof}

Taking higher secant complexes is not a one-to-one process. The following resolves some subtleties.

\begin{Lem}[cf.\ {\cite[Lemma 1.4]{MR2392585}}]\label{Eventually}
Let $\Delta_1$ and $\Delta_2$ be simplicial complexes with the same vertex set such that $\Delta_1\subseteq\Delta_2$, and take an integer $q\geq 1$. Then $\sigma_q\Delta_1=\sigma_q\Delta_2$ implies that $\sigma_{q+1}\Delta_1=\sigma_{q+1}\Delta_2$.  
\end{Lem}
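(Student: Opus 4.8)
The plan is to prove the nontrivial inclusion $\sigma_{q+1}\Delta_2 \subseteq \sigma_{q+1}\Delta_1$; the reverse inclusion is immediate from $\Delta_1\subseteq\Delta_2$. First I would restate everything in terms of coverings: a subset of $V$ lies in $\sigma_k\Delta$ exactly when it is a union of $k$ faces of $\Delta$ (allowing the empty face), so the hypothesis $\sigma_q\Delta_1=\sigma_q\Delta_2$ says precisely that every union of $q$ faces of $\Delta_2$ is also a union of $q$ faces of $\Delta_1$.

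Now fix $F\in\sigma_{q+1}\Delta_2$ and write $F=F_1\cup\cdots\cup F_{q+1}$ with each $F_i\in\Delta_2$. The set $F_1\cup\cdots\cup F_q$ is a union of $q$ faces of $\Delta_2$, so by hypothesis it equals $G_1\cup\cdots\cup G_q$ for some $G_1,\dots,G_q\in\Delta_1$. The key step is to regroup
$$F=G_q\cup\bigl(G_1\cup\cdots\cup G_{q-1}\cup F_{q+1}\bigr).$$
The parenthesized set is a union of $q$ faces of $\Delta_2$ — namely $G_1,\dots,G_{q-1}\in\Delta_1\subseteq\Delta_2$ together with $F_{q+1}\in\Delta_2$ — so the hypothesis applies once more and rewrites it as $H_1\cup\cdots\cup H_q$ with each $H_k\in\Delta_1$. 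Hence $F=G_q\cup H_1\cup\cdots\cup H_q$ is a union of $q+1$ faces of $\Delta_1$, i.e.\ $F\in\sigma_{q+1}\Delta_1$. (When $q=1$ the hypothesis degenerates to $\Delta_1=\Delta_2$ and the claim is trivial; the displayed identity still reads correctly with an empty union $G_1\cup\cdots\cup G_{q-1}$.)

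I expect the only subtle point — and the reason the lemma is not entirely obvious — to be the choice of which set to feed back into the hypothesis. The naive attempt rewrites $F_1\cup\cdots\cup F_q$ and $F_{q+1}$ separately over $\Delta_1$; but $F_{q+1}\in\Delta_2\subseteq\sigma_q\Delta_1$ only presents $F_{q+1}$ as a union of $q$ faces of $\Delta_1$, leaving $F$ as a union of $2q$ faces of $\Delta_1$, far too many once $q\ge 2$. The fix is to keep $q-1$ of the $G_j$ intact and merge them with the single leftover face $F_{q+1}$, forming a union of exactly $q$ faces of $\Delta_2$: this is precisely the regime that $\sigma_q\Delta_1=\sigma_q\Delta_2$ governs, and it costs only one extra face rather than $q$. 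Past that observation the proof is pure bookkeeping with unions of faces, with no homology or commutative algebra involved.
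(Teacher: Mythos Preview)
Your proof is correct and follows essentially the same approach as the paper: rewrite $F_1\cup\cdots\cup F_q$ as $G_1\cup\cdots\cup G_q$ with $G_j\in\Delta_1$, then set one $G_j$ aside and apply the hypothesis again to the remaining $q-1$ of the $G_j$ together with $F_{q+1}$. The only difference is cosmetic (you set aside $G_q$, the paper sets aside $G_1$).
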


\begin{proof}
    We trivially have $\sigma_{q+1}\Delta_1\subseteq\sigma_{q+1}\Delta_2$. For the reverse containment let $F=F_1\cup\cdots\cup F_{q+1}\in\sigma_{q+1}\Delta_2$ be an arbitrary face, where $F_1,\ldots,F_{q+1}\in\Delta_2$. Then since $F_1\cup\cdots\cup F_q\in\sigma_q\Delta_2=\sigma_q\Delta_1$, there are faces $G_1,\ldots,G_q\in\Delta_1$ such that $G_1\cup\cdots\cup G_q=F_1\cup\cdots\cup F_q$, hence $F=G_1\cup\cdots\cup G_q\cup F_{q+1}$. Do the same thing for $G_2\cup\cdots\cup G_q\cup F_{q+1}$ so that it is equal to $H_2\cup\cdots\cup H_{q+1}$ for some $H_2,\ldots,H_{q+1}\in\Delta_1$. We conclude that $F=G_1\cup H_2\cup\cdots\cup H_{q+1}\in\sigma_{q+1}\Delta_1$.
\end{proof}

We collect operations on simplicial complexes. Let $\Delta$ be a simplicial complex on $V$.
\begin{enumerate}
    \item The \emph{induced subcomplex} of $\Delta$ by a subset $W\subseteq V$ is defined to be  
    $$\Delta[W] = \{ F \cap W : F \in \Delta \}.$$
    \item For a face $F$ in $\Delta$, the \emph{star} and \emph{link} of $F$ in $\Delta$ are defined by
    $$\st_\Delta (F)= \{G\subseteq V: G\cup F\in\Delta\}
    $$ 
    and 
    $$\link_\Delta (F) = \{ G\subseteq V\setminus F: G\cup F \in \Delta\},
    $$
    respectively.
    Note that $\st_\Delta (F) = \{ F'\cup F: F' \in \link_\Delta (F)\}$.
    \item The \emph{vertex deletion} of $\Delta$ by a vertex $v$ of $\Delta$ is defined by $$\Delta- v = \{F \subseteq V\setminus \{v\}: F \in \Delta\}.$$
\end{enumerate}

We remark that taking induced subcomplexes commutes with taking higher secant complexes. 
That is, 
$$
(\sigma_q \Delta)[W] = \sigma_q(\Delta[W]).
$$
In particular, vertex deletions commute with taking higher secant complexes.

We recall face contractions of complexes.
\begin{Def}
    Let $\Delta$ be a simplicial complex on $V$ and $F$ be a face of $\Delta$, and introduce a new vertex $v$. On $(V\setminus F)\cup\{v\}$ we define a simplicial complex
    $$
    \Delta/F=\{(G\setminus F)\cup\{v\}: G\cap F\neq\emptyset\}\cup\{G:G\cap F=\emptyset\},
    $$
    where $G$ stands for a face of $\Delta$, and we call it the \emph{face contraction} of $\Delta$ by $F$. If $F$ is an edge, then it is called an \emph{edge contraction}.
\end{Def}

\begin{Rmk}
Let $\Delta$ be a simplicial complex.
    \begin{enumerate}
        \item For any face $F\in\Delta$ one can see that $\sigma_q(\Delta/F)=(\sigma_q\Delta)/F$, hence $\sigma_q\Delta/F$ is well-defined.
        \item When $\Delta$ is identified with a coordinate subspace arrangement $Z$ under the Stanley-Reisner correspondence, one can understand an edge contraction $\Delta/e$ as the linear projection of $Z$ from a general point in the projective line corresponding to the edge $e$.
    \end{enumerate}
\end{Rmk}

Edge contractions present a fascinating observation that plays a central role in our arguments to appear.

\begin{Thm}[{\cite[Theorem 2 and Lemma 3]{MR2919643}}]\label{lkcond}
Let $e$ be an edge of a simplicial complex $\Delta$, and suppose that $e$ is not contained in any minimal nonface of $\Delta$. Then $\Delta/e$ is homotopy equivalent to $\Delta$.
\end{Thm}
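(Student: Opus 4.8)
The plan is to reduce the hypothesis to the classical \emph{link condition} on the edge and then bridge $\Delta$ and $\Delta/e$ through a single auxiliary complex obtained by an expansion, so that the whole statement becomes an assembly of two standard homotopy‑theoretic moves: gluing a cone along a contractible subcomplex, and deleting a dominated vertex. Throughout write $e=\{a,b\}$ and let $v$ denote the new vertex of $\Delta/e$.

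The first stage is a purely combinatorial reformulation. I would show that ``$e$ is contained in no minimal nonface of $\Delta$'' is equivalent to the link condition $\link_\Delta(e)=\link_\Delta(a)\cap\link_\Delta(b)$, i.e.\ that $F\cup e\in\Delta$ whenever $F\subseteq V(\Delta)\setminus e$ satisfies $F\cup\{a\}\in\Delta$ and $F\cup\{b\}\in\Delta$. Indeed, if $F\cup e\notin\Delta$ then $F\cup e$ contains a minimal nonface $N$; since $F\cup\{a\}$ and $F\cup\{b\}$ are faces, $N$ is contained in neither, so $a,b\in N$ and hence $e\subseteq N$, contradicting the hypothesis. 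The reverse implication is immediate, so from now on I assume the link condition.

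For the second stage, set $\Delta^{+}:=\Delta\cup\bigl(\{b\}\ast\st_\Delta(a)\bigr)$, a simplicial complex whose faces not already in $\Delta$ are of the form $\{b\}\cup\sigma$ with $\sigma\in\st_\Delta(a)$ and $b\notin\sigma$. I would then establish the chain
$$\Delta\ \simeq\ \Delta^{+}\ \simeq\ \Delta^{+}-a\ \cong\ \Delta^{+}/e\ =\ \Delta/e$$
in four steps. (1) With $C=\{b\}\ast\st_\Delta(a)$ and $D=\Delta\cap C$, the link condition yields $D=\st_\Delta(a)$: the only nontrivial point is that $\{a,b\}\cup\sigma\in\Delta$ whenever $\sigma\in\st_\Delta(a)$, $b\notin\sigma$, and $\{b\}\cup\sigma\in\Delta$, which is automatic if $a\in\sigma$ and otherwise follows from $\sigma\in\link_\Delta(a)\cap\link_\Delta(b)=\link_\Delta(e)$. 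Since $\st_\Delta(a)=D$ is a cone (apex $a$), hence contractible, the cofibration $D\hookrightarrow C=\{b\}\ast D$ is the inclusion of a (strong) deformation retract; extending the retraction by the identity on $\Delta$ exhibits $\Delta$ as a deformation retract of $\Delta^{+}$. (2) In $\Delta^{+}$ the vertex $a$ is dominated by $b$, that is, $\link_{\Delta^{+}}(a)=\link_\Delta(a)\cup\bigl(\{b\}\ast\{\tau\in\link_\Delta(a):b\notin\tau\}\bigr)$ is a cone with apex $b$; hence $\Delta^{+}$ deformation retracts onto $\Delta^{+}-a$. (3) A general sub‑lemma, proved by a direct check: if $a$ is dominated by $b$ in a complex $K$ with $\{a,b\}\in K$, then the vertex map sending $v\mapsto b$ and fixing all other vertices is a simplicial isomorphism $K/\{a,b\}\cong K-a$ (faces of $K$ meeting $\{a,b\}$ correspond, via $a\leftrightarrow b$, to faces of $K-a$ containing $b$, where domination supplies the needed face $F\cup\{b\}$ from $F\cup\{a\}$); applied to $K=\Delta^{+}$ this gives $\Delta^{+}/e\cong\Delta^{+}-a$. (4) Finally $\Delta^{+}/e=\Delta/e$, because the contraction sends an extra face $\{b\}\cup\sigma$ to $(\sigma\setminus\{a\})\cup\{v\}$, which already equals the image of $\sigma\in\Delta$ when $a\in\sigma$ and of $\sigma\cup\{a\}\in\Delta$ when $a\notin\sigma$, so it lies in $\Delta/e$.

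The first stage and the link/star bookkeeping in steps (2)–(4) are routine; the heart of the argument — and the unique place the hypothesis is genuinely needed — is the identity $D=\st_\Delta(a)$ in step (1). Without the link condition $\Delta\cap C$ is strictly larger than $\st_\Delta(a)$ and typically not contractible, so attaching the cone $C$ changes the homotopy type; the prototype is the edge contraction of the boundary complex of a triangle, which collapses a circle onto a segment — and there $e$ indeed lies in the minimal nonface $\{a,b,c\}$. The two external facts invoked are standard: a cofibration that is a homotopy equivalence is the inclusion of a strong deformation retract (used for $D\hookrightarrow C$ in step (1)), and deletion of a dominated vertex is a strong deformation retraction, i.e.\ a strong collapse (used in step (2)).
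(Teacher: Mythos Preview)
Your proof is correct. The first stage translating the minimal-nonface hypothesis into the link condition $\link_\Delta(a)\cap\link_\Delta(b)=\link_\Delta(e)$ is clean, and the four-step bridge through $\Delta^{+}$ is the standard argument: the identification $D=\st_\Delta(a)$ in step~(1) is precisely where the link condition enters, and steps~(2)--(4) are routine verifications that assemble known homotopy moves (gluing along a contractible subcomplex, removing a dominated vertex, and the combinatorial isomorphism $K/\{a,b\}\cong K-a$ when $a$ is dominated by $b$).

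However, there is nothing to compare against: the paper does not prove this statement. \Cref{lkcond} is quoted from an external source (the reference labeled \cite{MR2919643}, specifically its Theorem~2 and Lemma~3) and is used as a black box to obtain \Cref{cor:EdgePreserve}. Your write-up therefore supplies a self-contained proof where the paper simply cites one; the approach you give---expanding to $\Delta^{+}$ and then strong-collapsing---is essentially the argument found in the cited literature on edge contractions and the link condition.
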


Combining with \Cref{thm:Prolong} we reach the following.

\begin{Cor}\label{cor:EdgePreserve}
Let $\Delta$ be a simplicial complex and $q\geq 1$ be an integer. Assume that $I(\sigma_q\Delta)$ is generated in degree $q+1$. Then for any edge $e$ of $\Delta$ the natural simplicial map $\phi:\Delta\to\Delta/e$ induces isomorphisms
$$
\phi_\ast:\widetilde{H}_\ast(\sigma_q\Delta,\Bbbk)\to\widetilde{H}_\ast(\sigma_q\Delta/e,\Bbbk).
$$
\end{Cor}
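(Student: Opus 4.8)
The plan is to deduce the statement from \Cref{lkcond} applied to the complex $\sigma_q\Delta$ together with the edge $e$. First I would record two easy preliminary points. Since $\Delta\subseteq\sigma_q\Delta$, the edge $e$ is a face of $\sigma_q\Delta$, so the contraction $(\sigma_q\Delta)/e$ is defined; and as noted in the Remark above it equals $\sigma_q(\Delta/e)$, with the natural simplicial quotient map $\sigma_q\Delta\to(\sigma_q\Delta)/e$ being precisely $\sigma_q\phi$, i.e.\ the map induced by $\phi$ on $q$-secant complexes. Thus, once we know that this quotient map is a homotopy equivalence, the induced maps $\phi_\ast$ on reduced homology are isomorphisms, which is exactly what is claimed.

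To invoke \Cref{lkcond} it then remains to check that $e$ is contained in no minimal nonface of $\sigma_q\Delta$. This is where the hypothesis that $I(\sigma_q\Delta)$ is generated in degree $q+1$ enters: combined with \Cref{thm:Prolong} (which in particular gives $I(\sigma_q\Delta)_j=0$ for $j\le q$, since any $W$ with $|W|\le q$ is a face of $\sigma_q\Delta$), it forces every minimal nonface $W$ of $\sigma_q\Delta$ to have exactly $q+1$ vertices, and then the description of $I(\sigma_q\Delta)_{q+1}$ in \Cref{thm:Prolong} says that $\Delta[W]$ has no edges. If $e\subseteq W$ for such a $W$, then $e$ would be an edge of $\Delta[W]$ (because $e\in\Delta$ and $e\subseteq W$), a contradiction. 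Hence no minimal nonface of $\sigma_q\Delta$ contains $e$, and \Cref{lkcond} yields that $(\sigma_q\Delta)/e$ is homotopy equivalent to $\sigma_q\Delta$ via the contraction map, which completes the argument together with the first paragraph.

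The part that needs care here is bookkeeping rather than anything conceptual: one must confirm that the homotopy equivalence provided by \Cref{lkcond} is the one induced by the natural quotient map (so that it is compatible with the map $\phi_\ast$ appearing in the statement), and that the identification $(\sigma_q\Delta)/e=\sigma_q(\Delta/e)$ intertwines the contraction of $\sigma_q\Delta$ by $e$ with $\sigma_q$ applied to the contraction $\phi\colon\Delta\to\Delta/e$. Both follow directly from unwinding the definitions of embedded join and face contraction, so in the write-up I would simply note them rather than belabor the verification.
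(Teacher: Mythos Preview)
Your proposal is correct and follows precisely the route the paper intends: the corollary is stated immediately after \Cref{lkcond} with the phrase ``Combining with \Cref{thm:Prolong} we reach the following,'' and your argument spells out exactly that combination. Your additional care about the homotopy equivalence being induced by the natural quotient map is a reasonable point to note, though the paper leaves it implicit.
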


Of particular interest in this article are simplicial complexes that arise from graphs. 
Let $G$ be a (finite simple) graph with vertex set $V$. 
Then we define the \emph{clique complex} $\Delta(G)$ of $G$ by 
$$
\Delta(G) = \{F\subseteq V : F \text{ is a clique of }G \}
$$
Recall that by definition a subset $F\subseteq V$ is a clique of $G$ if $G[F]$ is a complete graph.

\begin{Rmk}
    Let $\Delta(G)$ be a clique complex.
    \begin{enumerate}
        \item $I(\Delta(G))$ is generated in degree $2$ since minimal nonfaces of $\Delta(G)$ consist of two vertices.
        \item Taking induced subgraphs commutes with taking clique complexes. 
        \item But edge contraction does not commute with taking clique complexes in general. The commutativity holds if and only if the edge contracted lies in no induced cycles $C_4$.
    \end{enumerate}
\end{Rmk}

Due to the correspondence between a graph $G$ and its clique complex $\Delta(G)$, operations on simplicial complexes are naturally inherited by graphs.  
Thus, we now introduce some notions from structural graph theory.

Let $G = (V, E)$ be a graph, $v$ be a vertex in $G$, and $W$ be a subset of $V$.
\begin{enumerate}
    \item The \emph{induced subgraph} of $G$ by a subset $W\subseteq V$ is defined by $G[W]=(W,E')$, where $E' = \{e : e\in E(G) \text{ with } e\subseteq W\}$.
    \item The induced subgraph $G-v = G[V \setminus \{v\}]$ is called the \emph{vertex deletion} of $G$ by $v$.
    \item The \emph{open neighborhood} of $v$ in $G$, denoted by $N_G(v)$, is the subgraph of $G$ induced by vertices in $V$ adjacent to $v$.  
    The \emph{degree} of $v$ in $G$ is $\deg_G v = |N_G(v)|$.
    \item The \emph{closed neighborhood} of $v$ in $G$, denoted by $N_G[v]$, is the subgraph of $G$ induced by $v$ itself and the vertices in $V$ adjacent to $v$.  
    (That is, $N_G[v] = G[\{v\} \cup W]$, where $W = \{ w \in V : \{v, w\} \in E(G) \}$.)
    \item A graph $G$ is said to be \emph{$H$-free} for another graph $H$ if no induced subgraphs of $G$ are isomorphic to $H$. 
    \item For a family $\mathcal F$ of graphs one says that $G$ is \emph{$\mathcal F$-free} if it is $H$-free for all $H\in\mathcal F$.
\end{enumerate}

Now we discuss some results from structural graph theory.
The following notions take part in the next sections. 

\begin{Def}\label{def:Matching}
Let $G=(V,E)$ be a graph on $V$. 
\begin{enumerate}
    \item A \emph{matching} of size $q$ is a set of $q$ edges in $G$ sharing no vertices with each other.
    \item The \emph{matching number} of $G$, denoted by $\nu(G)$, is defined as the cardinality of the largest matching in $G$. 
    In other words, $\nu(G)$ is the size of the largest independent edge set in $G$.
    \item A matching $M$ is called \emph{perfect} if every vertex in $V$ is incident to exactly one edge in $M$. 
    \item A subset $U$ of $V$ is called a \emph{vertex cover} if every edge in $G$ is incident to at least one vertex in $U$.
    \item A vertex cover $U$ of $V$ is called a \emph{minimum vertex cover} if in addition the cardinality of $U$ is smallest among all vertex covers of $G$.
\end{enumerate}
\end{Def}

\bigskip

\section{Higher secant complexes to 2-regular complexes} \label{Sec:Reg2}

In this section we extend the following result of Fr{\"o}berg to higher secant complexes to clique complexes of chordal graphs.
\begin{Thm}[{\cite[Theorem 1]{MR1171260}}]\label{thm:Froeberg}
    A simplicial complex $\Delta$ is $2$-regular if and only if $\Delta=\Delta(G)$ for some chordal graph $G$ on $V(\Delta)$.
\end{Thm}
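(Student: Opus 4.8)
The plan is to work directly with the topological characterization of $2$-regularity given as item (2) of its definition: $\Delta$ is $2$-regular exactly when $\widetilde H_j(\Delta[W],\Bbbk)=0$ for every nonempty $W\subseteq V(\Delta)$ and every $j\geq 1$. Both implications then reduce to two ingredients: (a) understanding precisely when the reduced homology of an induced subcomplex is nonzero, and (b) the classical fact (Dirac's theorem) that every chordal graph has a simplicial vertex, i.e.\ a vertex whose neighborhood is a clique.

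For the ``$2$-regular $\Rightarrow$ chordal clique complex'' direction, I would first check that $\Delta$ must be a flag complex. If $W$ is a minimal nonface with $|W|=k$, then $\Delta[W]$ is the boundary of the $(k-1)$-simplex on $W$, so $\widetilde H_{k-2}(\Delta[W],\Bbbk)\cong\Bbbk$; $2$-regularity forces $k-2\leq 0$, and since every singleton is a face by nondegeneracy we get $k=2$. Hence every minimal nonface is an edge, which is exactly the statement that $\Delta=\Delta(G)$ with $G$ the $1$-skeleton of $\Delta$. If $G$ were not chordal it would contain an induced cycle $C_k$ with $k\geq 4$; then $\Delta[V(C_k)]=\Delta(C_k)$ is the $1$-dimensional complex $C_k$ itself, with $\widetilde H_1(C_k,\Bbbk)\cong\Bbbk\neq 0$, contradicting $2$-regularity. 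So $G$ is chordal.

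For the converse, let $\Delta=\Delta(G)$ with $G$ chordal. Since $\Delta(G)[W]=\Delta(G[W])$ and induced subgraphs of chordal graphs are chordal, it suffices to prove $\widetilde H_j(\Delta(G),\Bbbk)=0$ for all $j\geq 1$, which I would do by induction on $|V(G)|$, the base case being immediate. By Dirac's theorem $G$ has a simplicial vertex $v$, so $N_G(v)$ is a clique. Write $\Delta=\st_\Delta(v)\cup(\Delta-v)$, where $\Delta-v=\Delta(G-v)$ and $\st_\Delta(v)\cap(\Delta-v)=\link_\Delta(v)$. Now $\st_\Delta(v)=\{v\}\ast\link_\Delta(v)$ is a cone, hence contractible, and $\link_\Delta(v)=\Delta(N_G(v))$ is a simplex since $v$ is simplicial, hence contractible as well. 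The reduced Mayer--Vietoris sequence of this decomposition then yields $\widetilde H_j(\Delta,\Bbbk)\cong\widetilde H_j(\Delta(G-v),\Bbbk)$ for all $j\geq 1$, and the right-hand side vanishes by the inductive hypothesis.

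The main obstacle is the inductive step in the converse: one needs a vertex whose deletion simultaneously preserves chordality and makes both Mayer--Vietoris pieces (the star and the link) contractible, and this is precisely what the existence of a simplicial vertex provides. The forward direction, by contrast, is a routine homology computation for boundaries of simplices and for cycle graphs.
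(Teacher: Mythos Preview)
Your proposal is correct. For the direction ``chordal $\Rightarrow$ $2$-regular'' you follow exactly the approach the paper sketches in the paragraph after the statement: pick a simplicial vertex via Dirac's theorem, decompose $\Delta(G)$ as $\st_\Delta(v)\cup(\Delta-v)$ with intersection $\link_\Delta(v)$, observe that both the star and the link are contractible (the latter because $v$ is simplicial), and run Mayer--Vietoris with induction on the number of vertices. The paper does not actually argue the other direction (``$2$-regular $\Rightarrow$ chordal flag'') in the text---it simply attributes the full statement to Fr\"oberg---so your argument there (a minimal nonface of size $k$ forces $\widetilde H_{k-2}\neq 0$, whence $k=2$; and an induced $C_k$ with $k\geq 4$ contributes $\widetilde H_1\cong\Bbbk$) supplies more detail than the paper, but is the standard and correct route.
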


Useful facts about the structure of chordal graphs are that they are closed under vertex deletions and that any chordal graph $G$ admits a vertex $v$ such that $N_G(v)$ form a clique, namely a \emph{simplicial vertex}.
\begin{Thm}[A corollary of {\cite[Theorem 1]{MR130190}}]
Every chordal graph $G$ has a simplicial vertex.
\end{Thm}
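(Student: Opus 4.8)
The plan is to establish, by induction on the number of vertices, the stronger assertion of Dirac that a chordal graph $G$ is either complete or contains two non-adjacent simplicial vertices; the theorem then follows at once, since in a complete graph every vertex is simplicial. The base case in which $|V(G)| \le 2$ or $G$ is complete is trivial, so assume $G$ is chordal and not complete and fix two non-adjacent vertices $a, b \in V(G)$.

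First I would choose an inclusion-minimal subset $S \subseteq V(G) \setminus \{a, b\}$ whose removal separates $a$ from $b$, and let $A$ and $B$ be the connected components of $G - S$ containing $a$ and $b$, respectively. The crucial step is to prove that $S$ is a clique of $G$, and this is where chordality enters and is the main obstacle. Suppose $u, v \in S$ are non-adjacent. By minimality of $S$, neither $S \setminus \{u\}$ nor $S \setminus \{v\}$ separates $a$ from $b$, so each of $u$ and $v$ has a neighbor in $A$ and a neighbor in $B$; hence there is a path from $u$ to $v$ whose interior lies in $A$ and another whose interior lies in $B$, and taking shortest such paths and concatenating them yields an induced cycle on at least four vertices, contradicting that $G$ is chordal.

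Granting that $S$ is a clique, I would next observe that for any vertex $a' \in A$ one has $N_G(a') \subseteq A \cup S$, because $A$ is a connected component of $G - S$; consequently, a vertex of $A$ that is simplicial in the induced subgraph $G[A \cup S]$ is automatically simplicial in $G$. Since $B \neq \emptyset$, the graph $G[A \cup S]$ is chordal with strictly fewer vertices than $G$. If $G[A \cup S]$ is complete, then every vertex of $A$ is simplicial in $G$; otherwise the induction hypothesis furnishes two non-adjacent simplicial vertices of $G[A \cup S]$, and as $S$ is a clique at least one of them lies in $A$. Either way we obtain a simplicial vertex $s_A \in A$ of $G$, and by the symmetric argument a simplicial vertex $s_B \in B$ of $G$; since $s_A$ and $s_B$ lie in distinct components of $G - S$ they are non-adjacent, which completes the induction. (Alternatively, one could invoke a clique tree of $G$ and extract a simplicial vertex from any of its leaves, but the minimal-separator argument above is self-contained.)
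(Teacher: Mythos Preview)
Your argument is the classical Dirac proof and is correct. Note, however, that the paper does not actually prove this statement: it is stated as a corollary of \cite[Theorem 1]{MR130190} and used as a black box to obtain perfect elimination orderings for chordal graphs. So there is no ``paper's own proof'' to compare against; you have simply supplied the standard self-contained justification that the paper omits.
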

Hence, any chordal graph $G$ has a \emph{perfect elimination ordering} $(v_0,v_1,\dots,v_r)$ of the vertices in $G$ so that for every $i=0,1,\ldots,r$ the vertex $v_i$ is simplicial in the vertex deletion $G-v_0-v_1-\cdots-v_{i-1}$ being chordal.

By the structure of chordal graphs one can prove the following direction of Fr{\"o}berg's result by using the Mayer-Vietoris sequence and induction on the number of vertices: If $G$ is chordal, then $\Delta(G)$ is $2$-regular. 
For a generalization to \emph{hereditary} families of graphs, see \cite[Theorem 1.4]{MR4123750}.

The following is the main theorem of this section.

\begin{Thm}\label{thm:2Regular}
If a simplicial complex $\Delta$ is $2$-regular, then $\sigma_q\Delta$ is $(q+1)$-regular for every integer $q\geq 2$. 
\end{Thm}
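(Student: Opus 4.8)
The plan is to combine Fröberg's theorem with a Mayer--Vietoris induction that peels off a simplicial vertex. By \Cref{thm:Froeberg} a $2$-regular complex is $\Delta(G)$ for a chordal graph $G$, and since taking induced subcomplexes commutes with $\sigma_q$ and induced subgraphs of chordal graphs are chordal, it suffices to prove the following: for every chordal graph $G$ and every integer $q\ge 1$ one has $\widetilde H_j(\sigma_q\Delta(G),\Bbbk)=0$ for all $j\ge q$. I would prove this by a double induction---an outer induction on $q$, whose base case $q=1$ is exactly Fröberg's theorem ($\sigma_1\Delta(G)=\Delta(G)$ is $2$-regular), and for fixed $q\ge 2$ an inner induction on $|V(G)|$, with the trivial base cases $|V(G)|\le 1$.

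For the inductive step fix $q\ge 2$ and a chordal $G$, and choose a simplicial vertex $v$ of $G$ (which exists). The key computation is the identity
$$\link_{\sigma_q\Delta(G)}(v)=\Delta(N_G(v))\ast\sigma_{q-1}\Delta(G-v),$$
together with the general fact $\st_{\sigma_q\Delta(G)}(v)=\{v\}\ast\link_{\sigma_q\Delta(G)}(v)$. The forward inclusion of the identity is where simpliciality of $v$ enters: if $F_1\cup\dots\cup F_q\in\sigma_q\Delta(G)$ contains $v$, then every $F_i$ containing $v$ lies in $N_G[v]$, and because $N_G(v)$ is a clique, the union of all the sets $F_i\setminus\{v\}$ with $v\in F_i$ is again a clique inside $N_G(v)$; the remaining at most $q-1$ cliques avoid $v$ and hence are cliques of $G-v$. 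The reverse inclusion is immediate, since $F'\cup\{v\}$ is a clique of $G$ whenever $F'\subseteq N_G(v)$. I would write this out carefully, as it is the heart of the argument; everything else is bookkeeping.

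From the identity, $\st_{\sigma_q\Delta(G)}(v)$ is a cone, hence acyclic, while $\link_{\sigma_q\Delta(G)}(v)$ is either a join with the nonempty simplex $\Delta(N_G(v))$---hence contractible---when $N_G(v)\neq\emptyset$, or equals $\sigma_{q-1}\Delta(G-v)$ when $v$ is isolated. Now apply reduced Mayer--Vietoris to $\sigma_q\Delta(G)=\bigl(\sigma_q\Delta(G)-v\bigr)\cup\st_{\sigma_q\Delta(G)}(v)$, whose intersection is $\link_{\sigma_q\Delta(G)}(v)$ and whose second piece is acyclic, and use $\sigma_q\Delta(G)-v=\sigma_q\Delta(G-v)$ (vertex deletion commutes with $\sigma_q$ and with taking clique complexes). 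This yields an exact sequence
$$\widetilde H_j(\sigma_q\Delta(G-v))\longrightarrow\widetilde H_j(\sigma_q\Delta(G))\longrightarrow\widetilde H_{j-1}\bigl(\link_{\sigma_q\Delta(G)}(v)\bigr).$$
For $j\ge q$ the left term vanishes by the inner induction hypothesis applied to the chordal graph $G-v$, and the right term vanishes because $j-1\ge q-1$: either $\link_{\sigma_q\Delta(G)}(v)$ is contractible, or it is $\sigma_{q-1}\Delta(G-v)$ and then $\widetilde H_{j-1}$ vanishes by the outer induction hypothesis at level $q-1$ (which is Fröberg's theorem when $q=2$). Hence $\widetilde H_j(\sigma_q\Delta(G))=0$ for all $j\ge q$, completing the induction, and then $(q+1)$-regularity of $\sigma_q\Delta$ for a $2$-regular $\Delta$ follows by passing to induced subcomplexes. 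The main obstacle is proving the link identity in the right generality; once it is available the Mayer--Vietoris step is routine, and the chordality hypothesis is used precisely through the existence of a simplicial vertex together with the clique structure of its neighborhood.
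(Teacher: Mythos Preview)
Your proof is correct and follows essentially the same approach as the paper: both use the Mayer--Vietoris decomposition $\sigma_q\Delta(G)=(\sigma_q\Delta(G)-v)\cup\st_{\sigma_q\Delta(G)}(v)$ at a simplicial vertex $v$, establish the same link identity $\link_{\sigma_q\Delta(G)}(v)=\Delta(G[N_G(v)])\ast\sigma_{q-1}\Delta(G-v)$, and conclude by induction. The only cosmetic differences are that you organize the argument as a double induction (outer on $q$, inner on $|V|$) and dispose of the link by contractibility when $N_G(v)\neq\emptyset$, whereas the paper runs a single induction on $|V|$ for all $q$ at once and instead identifies the link with $\sigma_{q-1}\Delta(G[V\setminus N_G[v]])$ up to universal vertices before invoking the hypothesis.
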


\begin{proof}
We proceed by induction on the number of vertices. Note that \Cref{thm:Froeberg} gives a chordal graph $G$ on $V=V(\Delta)$ such that $\Delta(G)=\Delta$. Take a simplicial vertex $v$ of $G$. We decompose $\sigma_q\Delta$ into
$$
\sigma_q \Delta = (\sigma_q \Delta - v) \cup (\st_{\sigma_q \Delta} v)\quad\text{with}\quad(\sigma_q \Delta - v) \cap (\st_{\sigma_q \Delta} v) = \link_{\sigma_q \Delta} v.
$$
By the Mayer-Vietoris sequence we obtain 
$$
\reg(\sigma_q \Delta) \le \max \{ \reg(\sigma_q \Delta - v), \reg(\link_{\sigma_q \Delta} v) + 1\}.
$$
So it suffices to verify that $\reg(\sigma_q\Delta(G)-v)\leq q+1$ and $\reg(\link_{\sigma_q\Delta(G)}v)\leq q$. 

To this end observe that $\sigma_q\Delta(G)-v=\sigma_q\Delta(G-v)$ and 
$$
\link_{\sigma_q\Delta(G)}v=(\link_{\Delta(G)}v)\ast(\sigma_{q-1}\Delta(G)-v)=\Delta(G[N_G(v)])*\sigma_{q-1}\Delta(G-v).
$$
Especially, $\link_{\sigma_q\Delta(G)}v$ is isomorphic to $\sigma_{q-1}\Delta(G[V\setminus N_G[v]])$ up to universal vertices, for $v$ is simplicial in $G$. 
Since $G-v$ and $G[V\setminus N_G[v]]$ are both chordal with less vertices, we are done by the induction hypothesis.
\end{proof}

The same phenomenon occurs in the class of projective varieties. 
Explicitly speaking, if $X\subseteq\mathbb P^r$ is a variety of minimal degree over $\Bbbk$ algebraically closed, then $\sigma_qX\subseteq\mathbb P^r$ is a $q$-secant variety of minimal degree for any integer $q\geq 2$. See for example \cite{MR2199628}. 

Due to \Cref{thm:main} we will see that if $G$ is a chordal graph, then it is also $q$-secant chordal for any $q \geq 2$.

The converse of \Cref{thm:2Regular} is not true obviously. We provide a counterexample.

\begin{Ex}\label{ex:sunlet}
    Let $\Delta$ be the clique complex of the graph in \Cref{fig:3regbutminimalsecants}.
    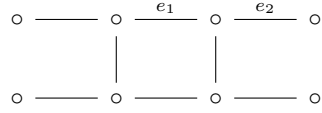
\begin{figure}[h!]
        \centering
        \begin{tikzcd}
        \circ \ar[r,no head] & \circ \ar[r,no head,"e_1"] \ar[d,no head] & \circ \ar[r,no head,"e_2"] \ar[d,no head] & \circ \\
        \circ \ar[r,no head] & \circ \ar[r,no head] & \circ \ar[r,no head] & \circ 
        \end{tikzcd}
        \caption{The $4$-sunlet graph}
        \label{fig:3regbutminimalsecants}
    \end{figure}
    
With assistance of Macaulay2 \cite{M2} we see that 
$$
\reg\Delta=3,\quad\text{but}\quad\reg\sigma_2\Delta=3,\quad\text{and}\quad\reg\sigma_3\Delta=4
$$
with $\sigma_4\Delta$ a simplex.
\end{Ex}

As a result we may explicitly compute the whole graded Betti numbers of higher secant complexes to certain $2$-regular complexes.

\begin{Cor}\label{cor:PKBetti}
Suppose that $G$ is either $P_{r+1}$ or $\overline{K_{r+1}}$, and let $c$ denote the codimension of $\sigma_q\Delta(G)$.
Then $S(\sigma_q\Delta(G))$ is Cohen-Macaulay, and we obtain
$$\beta_{p,q}(\sigma_q\Delta(G))= \binom{p+q-1}{q}\binom{c+q}{p+q}\quad\text{for}\quad 1\le p \le c.$$
\begin{table}[h!]
    \centering
    \begin{tabular}{c|c c c c c c}
          & $0$ & $1$ & $2$ & $\cdots$ & $c$ \\
        \hline
       $0$ & $1$ & . & . & $\cdots$ & . \\
       $q$ & . & $\beta_{1,q}$ & $\beta_{2,q}$ & $\cdots$  & $\beta_{c,q}$ \\
    \end{tabular}
    \caption{Betti tables of $\sigma_q\Delta(\overline{K}_{r+1})$ and $\sigma_q\Delta(P_{r+1})$}
    \label{tab:secantisolatedpts}
\end{table}
\end{Cor}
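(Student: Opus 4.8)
The plan is to first establish Cohen--Macaulayness and then extract the Betti numbers via the Herzog--K\"uhl formula (\Cref{thm:pure}). For the case $G=\overline{K_{r+1}}$ the simplicial complex $\Delta(G)$ consists only of the vertices, so $\sigma_q\Delta(G)$ is the $(q-1)$-skeleton of the simplex on $r+1$ vertices; its Stanley--Reisner ideal is generated by all squarefree monomials of degree $q+1$, which is a well-known ideal whose resolution is the Eagon--Northcott-type (in fact the tail of the Koszul complex) resolution, and in particular $S(\sigma_q\Delta(\overline{K_{r+1}}))$ is Cohen--Macaulay with a $(q+1)$-linear resolution. For $G=P_{r+1}$, the path graph is chordal, so by \Cref{thm:2Regular} the complex $\sigma_q\Delta(P_{r+1})$ is $(q+1)$-regular, i.e.\ $I(\sigma_q\Delta(P_{r+1}))$ has regularity $q+1$; since by \Cref{thm:Prolong} this ideal has no generators in degree $q$ and is generated in degree $q+1$ (the path has an edge on any $q+1$ vertices only if\ldots actually one must check the generators sit in degree $q+1$, which follows from $(q+1)$-regularity plus the vanishing $I(\sigma_q\Delta)_q=0$), the resolution is $(q+1)$-linear, hence pure. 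So in both cases the resolution is pure, concentrated in the single row $j=q+1$ (row $q$ in the $\beta_{i,j}(\Delta)$ convention of the excerpt, i.e.\ $\beta_{i,j}$ with $j=q$), with Betti table of the claimed shape.

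Next I would pin down Cohen--Macaulayness. A pure resolution that is $(q+1)$-linear for a quotient $S/I$ where $I$ is generated in degree $q+1$ forces $\pd S(\sigma_q\Delta(G)) = c = \codim\sigma_q\Delta(G)$ precisely when the ring is Cohen--Macaulay; conversely, one direction of \Cref{thm:pure} says the Betti numbers of a Cohen--Macaulay pure resolution are determined by the formula. So the real content is to prove $\pd = \codim$. For $\overline{K_{r+1}}$ this is classical. For $P_{r+1}$ I would argue that $\sigma_q\Delta(P_{r+1})$ is Cohen--Macaulay, for instance by exhibiting a recursive structure: deleting an endpoint vertex $v$ of the path, one has $\sigma_q\Delta(P_{r+1}) - v = \sigma_q\Delta(P_r)$ and (as in the proof of \Cref{thm:2Regular}) $\link_{\sigma_q\Delta(P_{r+1})}v = \Delta(G[N(v)]) * \sigma_{q-1}\Delta(P_r)$, which up to a universal vertex is $\sigma_{q-1}\Delta(P_{r-1})$; by the Reisner criterion together with induction on $r$ and $q$, and the fact that stars and such links are themselves Cohen--Macaulay of the right dimension, one deduces $\sigma_q\Delta(P_{r+1})$ is Cohen--Macaulay. (Alternatively, once the linearity is known, one can cite that a squarefree ideal with linear resolution whose complex is, say, vertex-decomposable is Cohen--Macaulay; the path graph's secant complexes should be vertex-decomposable via the same endpoint recursion.)

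Finally, with purity and Cohen--Macaulayness in hand, I would plug into \Cref{thm:pure}. The resolution has $j_0 = 0$ and $j_i = q$ for $1 \le i \le c$ in the $(i+j)$-grading of the excerpt (so the generators sit in total degree $q+1$ and each subsequent map is linear). The Herzog--K\"uhl formula then yields, after a routine binomial simplification,
$$
\beta_{i,q}(\sigma_q\Delta(G)) = \binom{i+q-1}{q}\binom{c+q}{i+q},
$$
which is exactly the claimed value with $p=i$; one checks separately that $\beta_{0,0}=1$ and all other entries vanish, giving the stated Betti table. The main obstacle I anticipate is the Cohen--Macaulayness of $\sigma_q\Delta(P_{r+1})$: the linearity of the resolution is handed to us by \Cref{thm:2Regular}, but upgrading linear resolution to $\pd = \codim$ requires the separate homological input above (Reisner's criterion plus the endpoint recursion, or vertex-decomposability), and care is needed to track the shift by universal vertices in the link and to handle the base cases $q=2$ and small $r$ correctly.
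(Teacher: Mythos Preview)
Your overall architecture matches the paper's: reduce to Cohen--Macaulayness, note that chordality plus \Cref{thm:2Regular} gives a pure $(q+1)$-linear resolution, and then read off the Betti numbers from the Herzog--K\"uhl formula \Cref{thm:pure}. The difference lies in how Cohen--Macaulayness is established.

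For $\overline{K_{r+1}}$ the paper does not invoke the Eagon--Northcott/Koszul description; it simply observes via Hochster's formula (\Cref{thm:Homology}) that $\beta_{c+1,q}$ would require a subset $W$ with $|W|=c+q+1=r+2>r+1$, which is vacuous. Your citation of the classical resolution is fine and equivalent. For $P_{r+1}$ the paper's argument is genuinely different from yours: rather than the endpoint deletion--link recursion (double induction on $q$ and $r$) or vertex-decomposability, the paper runs a single induction on $r$ through Hochster's formula and the edge-contraction lemma \Cref{cor:EdgePreserve}. Concretely, for any $W$ with $|W|=c+q+1=r-q+2$ one has $|W|\ge\lceil(r+1)/2\rceil+1$, so $P_{r+1}[W]$ contains an edge $e$; contracting $e$ identifies $\widetilde H_{q-1}(\sigma_q\Delta(P_{r+1})[W],\Bbbk)$ with the corresponding homology of an induced subcomplex of $\sigma_q\Delta(P_r)$ on $(c-1)+q$ vertices, which vanishes by the inductive hypothesis. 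Your Reisner-plus-endpoint approach would also work (the link at an endpoint is, up to a universal vertex, $\sigma_{q-1}\Delta(P_{r-1})$, as in the proof of \Cref{thm:2Regular}), but it needs the extra induction layer on $q$ and the dimension bookkeeping you flag. The paper's route avoids that second induction and, more importantly, showcases \Cref{cor:EdgePreserve}, which is the workhorse for the later results (\Cref{FBettiTable}, \Cref{thm:main}, \Cref{thm:numerical}).
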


\begin{proof}
It suffices to show that $S(\sigma_q\Delta(G))$ is Cohen-Macaulay.
Indeed, since both $\overline{K_{r+1}}$ and $P_{r+1}$ are chordal, $S(\sigma_q\Delta(G))$ has a pure resolution by \Cref{thm:2Regular}.
Now, if $S(\sigma_q\Delta(G))$ is Cohen-Macaulay, then the Betti number computation follows from applying \Cref{thm:pure}.

We prove that $S(\sigma_q \Delta(\overline{K_{r+1}}))$ is Cohen-Macaulay.  
We may assume that $r \geq q$ since if $r \leq q-1$, then $\sigma_q\Delta(\overline{K_{r+1}})$ is simply a simplex on $r+1$ vertices.
For $r \geq q$ the complex $\sigma_q\Delta(\overline{K_{r+1}})$ is the collection of subsets of $V(\overline{K_{r+1}})$ whose sizes are at most $q$.
In particular, the codimension of $\sigma_q \Delta(\overline{K_{r+1}})$ is $c=r-q+1$.  
By \Cref{thm:Homology} it suffices to show that $\widetilde{H}_{q-1}(\sigma_q \Delta(\overline{K_{r+1}})[W],\Bbbk) = 0$ for all $W \subseteq V(\overline{K_{r+1}})$ with $|W| = c+q+1$.  
This follows immediately because $|W| = r+2 > r+1$.  
Thus, $S(\sigma_q \Delta(\overline{K_{r+1}}))$ is Cohen-Macaulay.
We now prove by induction on $r$ that $S(\sigma_q\Delta(P_{r+1}))$ is Cohen-Macaulay for any fixed integer $q\geq 1$.  
We may set $r \geq 2q$ as the case $r \leq 2q-1$ is trivial since $\sigma_q\Delta(P_{r+1})$ becomes a simplex on $r+1$ vertices.  
If $r = 2q$, then $\sigma_q\Delta(P_{2q+1})$ is given by a single nonface, and so $S(\sigma_q\Delta(P_{2q+1}))$ is Cohen-Macaulay.

Now, suppose $S(\sigma_q\Delta(P_{r}))$ is Cohen-Macaulay for an integer $r\geq 2q+1$.
To show that $S(\sigma_q\Delta(P_{r+1}))$ is Cohen-Macaulay, we claim that $\widetilde{H}_{q-1}(\sigma_q \Delta(P_{r+1})[W],\Bbbk) = 0$ for any $W \subseteq V(P_{r+1})$ with $|W| = c+q+1 =r-q+2$.
Choose such a subset $W$.  
Since $|W| \geq \lceil(r+1)/2\rceil+1$, there exists an edge $e$ in $P_{r+1}[W]$.  
Then by \Cref{cor:EdgePreserve} one sees that
$$
\widetilde{H}_{q-1}(\sigma_q \Delta(P_{r+1})[W],\Bbbk) \cong \widetilde{H}_{q-1}(\sigma_q\Delta(P_{r+1})[W]/e,\Bbbk),
$$
and $\Delta(P_{r+1})[W]/e$ is isomorphic to an induced subcomplex of $\Delta(P_{r})$ with $(c-1)+q$ vertices.
Thus, $\widetilde{H}_{q-1}(\sigma_q \Delta(P_{r+1})[W],\Bbbk)=0$ by the inductive hypothesis.
\end{proof}

This argument is expanded in \Cref{Sec:CM} so that the quantities $\beta_{p,q}(\sigma_q\Delta(G))$ are interpreted graph-theoretically.

\section{Special families of graphs}\label{Sec:special}
In this section we introduce two families of graphs that obstruct $q$-secant complexes from satisfying property $N_{q+1,p}$ for each $q \geq 1$, and also we collect structural results to be used in the proofs of our main theorems.

\begin{Def}[\cite{hetyei2x} and \cite{MR0498248}]\label{EB}
A bipartite graph $G$ with bipartition $V(G)=U\sqcup W$ is called \emph{elementary} if one of the following equivalent conditions holds.
\begin{enumerate}
    \item\label{EB1} $|U|=|W|$, and every subset $\emptyset\subsetneq U'\subsetneq U$ satisfies $|N_G(U')|>|U'|$.
    \item\label{EB2} $G-u-w$ has a perfect matching for all $u\in U$ and $w\in W$. 
    \item \label{EB3} $G$ is connected and equal to the union of perfect matchings of $G$.
    \item \label{EB4} The union of perfect matchings of $G$ forms a connected subgraph.
    \item \label{EB5} $U$ and $W$ are the only minimum vertex covers of $G$.
\end{enumerate}
\end{Def}

\begin{Rmk}
Let $G\subseteq K_{q+1,q+1}$ be an elementary bipartite graph on $2q+2$ vertices, where $K_{q+1,q+1}$ is the complete bipartite graph. Then by \Cref{EB}\eqref{EB1} every subgraph between them is also elementary bipartite.
\end{Rmk}

For the structure of elementary bipartite graphs, we refer to the following. 

\begin{Thm}[{\cite[Theorem 2]{MR0498248}}]\label{EBstructure}
An elementary bipartite graph is precisely a balanced bipartite graph with a unique bipartition of the vertex set that can be recursively constructed as follows.
\begin{enumerate}
    \item $K_2$ is elementary bipartite. 
    \item For an elementary bipartite $H$ with bipartition coloring, a new elementary bipartite graph is obtained by introducing a new path $P_{2m}$ for an integer $m\geq 1$ and then by identifying the endpoints of $P_{2m}$ with any two vertices of different colors in $H$.
\end{enumerate}
\end{Thm}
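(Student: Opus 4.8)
To establish \Cref{EBstructure} I would separate it into three claims: that a connected bipartite graph with a perfect matching carries a unique, balanced bipartition; that every graph produced by the recursion is elementary bipartite; and that, conversely, every elementary bipartite graph arises in this way. The first claim is immediate — a connected bipartite graph has a unique $2$-colouring, and a perfect matching (which exists by \Cref{EB}\eqref{EB2}) pairs off the two colour classes — and it also shows the recursion is well posed: a path $P_{2m}$ has odd length $2m-1$, so identifying its two endpoints with vertices of different colours in an already $2$-coloured bipartite graph respects the colouring, keeping us inside the class of connected balanced bipartite graphs.

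For the ``if'' direction I would induct on the number of ears attached, the base case $K_2$ being elementary by \Cref{EB}\eqref{EB3}. Suppose $H$ is elementary bipartite with colour classes $U\sqcup W$ and $G$ is obtained by attaching the path $v_0v_1\cdots v_{2m-1}$ with $v_0=u\in U$ and $v_{2m-1}=w\in W$. Then $G$ is connected, so by \Cref{EB}\eqref{EB3} it suffices to exhibit, for each edge of $G$, a perfect matching of $G$ containing it. An edge of $H$ lies in some perfect matching $M_H$ of $H$, and $M_H\cup\{v_1v_2,v_3v_4,\ldots,v_{2m-3}v_{2m-2}\}$ is then a perfect matching of $G$ through it; taking any perfect matching $M_H$ of $H$, this same union also covers the path edges $v_1v_2,v_3v_4,\ldots,v_{2m-3}v_{2m-2}$. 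For the remaining path edges $v_0v_1,v_2v_3,\ldots,v_{2m-2}v_{2m-1}$ I would instead take a perfect matching of $H-u-w$ (which exists by \Cref{EB}\eqref{EB2}) together with $\{v_0v_1,v_2v_3,\ldots,v_{2m-2}v_{2m-1}\}$. Hence $G$ is the union of its perfect matchings, so it is elementary bipartite.

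For the ``only if'' direction I would build a bipartite ear decomposition of a given elementary bipartite $G$ from the inside: let $G_0$ be a single edge of $G$, and, given an elementary bipartite subgraph $G_j\subsetneq G$ carrying the inherited colouring, adjoin to it an \emph{ear} of $G$ — a path of odd length, internally disjoint from $V(G_j)$, whose two endpoints are vertices of $G_j$ of opposite colours — to form $G_{j+1}=G_j\cup P\subseteq G$. By the ``if'' direction each $G_{j+1}$ is again elementary bipartite, and the number of edges strictly increases, so the process ends at $G_k=G$, exhibiting $G$ as built by the recursion. The existence of an admissible ear $P$ at each stage is the heart of the matter: using that $G$ is connected and matching covered one locates an edge of $G$ leaving $G_j$ and lying in a perfect matching $M$, picks a second perfect matching $M'$, passes to the alternating cycle of $M\triangle M'$ through that edge, and traverses it from $V(G_j)$ until the first return to $V(G_j)$; this yields a path internally disjoint from $G_j$ whose length is forced to be odd — so that its endpoints land in opposite colour classes — by the bipartite alternation of the cycle.

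The step I expect to require the most care is exactly this last one in the ``only if'' direction: arranging, uniformly over all stages of the greedy construction, that the extracted alternating path is internally disjoint from the current subgraph \emph{and} has odd length (equivalently, returns on the opposite colour class). This bookkeeping is precisely the substance of the classical ear-decomposition theorem for matching covered bipartite graphs (cf.\ Lov\'asz and Plummer, \emph{Matching Theory}); everything else reduces to the Hall-type equivalences already recorded in \Cref{EB}. Since the statement is quoted verbatim from \cite{MR0498248}, in the paper one would simply cite that source, but the outline above is the route I would follow to reprove it.
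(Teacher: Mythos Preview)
The paper does not prove \Cref{EBstructure}; it is quoted from \cite{MR0498248} and used as a black box, so there is no in-paper argument to compare against. Your outline follows the classical ear-decomposition route, and your ``if'' direction is clean and correct.

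In the ``only if'' direction your sketch has the right shape but one step is looser than it should be. With $M$ and $M'$ taken to be two \emph{arbitrary} perfect matchings of $G$, the $M/M'$-alternation on a cycle of $M\triangle M'$ is a different $2$-colouring of the edges than the bipartite $2$-colouring of the vertices, so ``bipartite alternation'' alone does not force the first-return path to have odd length. The standard fix is to keep track of one more invariant---that each $G_j$ is \emph{nice}, i.e.\ $G-V(G_j)$ admits a perfect matching $M_1$---and to take $M=M_0\cup M_1$ with $M_0$ a perfect matching of $G_j$; then every $M$-edge at a vertex outside $V(G_j)$ stays outside, so the walk from $x$ through $y$ can only re-enter $V(G_j)$ along an $M'$-edge, giving an $M',M,M',\ldots,M'$ pattern and hence odd length, and the internal $M$-edges witness that $G_{j+1}$ is again nice. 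You already flag this step as the delicate one and point to Lov\'asz--Plummer; that is exactly where this refinement lives, so your self-assessment is accurate.
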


For examples of elementary bipartite graphs see the following.
\begin{Ex}
    The $4$-cycle $C_4$ is the unique elementary bipartite graph on $4$ vertices.
    We list all elementary bipartite graphs on $6$ vertices in \Cref{fig:EB6} and all edge-minimal elementary bipartite graphs on $8$ vertices in \Cref{fig:EB8}:
    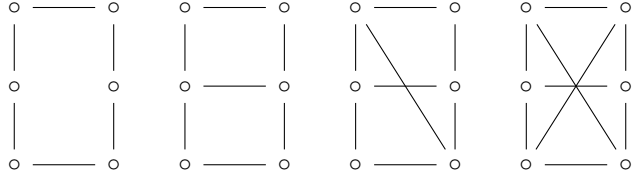
\begin{figure}[h!]
    \centering
    \begin{tikzcd}
        \circ \ar[r,no head] \ar[d,no head] & \circ \ar[d,no head] \\
        \circ \ar[d,no head] & \circ \ar[d,no head] \\
        \circ \ar[r,no head] & \circ
    \end{tikzcd}
    \quad
    \begin{tikzcd}
        \circ \ar[r,no head] \ar[d,no head] & \circ \ar[d,no head] \\
        \circ \ar[d,no head] \ar[r,no head] & \circ \ar[d,no head] \\
        \circ \ar[r,no head] & \circ
    \end{tikzcd}
    \quad
    \begin{tikzcd}
        \circ \ar[r,no head] \ar[d,no head] \ar[ddr,no head] & \circ \ar[d,no head] \\
        \circ \ar[d,no head] \ar[r,no head] & \circ \ar[d,no head] \\
        \circ \ar[r,no head] & \circ
    \end{tikzcd}
    \quad
    \begin{tikzcd}
        \circ \ar[r,no head] \ar[d,no head] \ar[ddr,no head] & \circ \ar[d,no head] \ar[ddl,no head] \\
        \circ \ar[d,no head] \ar[r,no head] & \circ \ar[d,no head] \\
        \circ \ar[r,no head] & \circ
    \end{tikzcd}
    \caption{All elementary bipartite graphs on $6$ vertices}
    \label{fig:EB6}
\end{figure}
\begin{figure}[h!]
\centering
    \begin{subfigure}{0.4\textwidth}
    $$
\begin{tikzcd}
    \circ \ar[r,no head] \ar[d,no head] & \circ \ar[r,no head] & \circ \ar[r,no head] & \circ \ar[d,no head] \\
    \circ \ar[r,no head] & \circ \ar[r,no head] & \circ \ar[r,no head] & \circ
\end{tikzcd}
$$
\caption{}
\end{subfigure}
\begin{subfigure}{0.4\textwidth}
    $$
\begin{tikzcd}
    \circ \ar[d,no head] \ar[r,no head] & \circ \ar[r,no head] & \circ \ar[d,no head] & \circ \ar[d,no head] \ar[lll,no head,bend right=20] \\
    \circ \ar[r,no head] & \circ \ar[r,no head] & \circ \ar[r,no head] & \circ
\end{tikzcd}
$$
\caption{}
\label{NoHamil}
\end{subfigure}
\caption{All edge-minimal elementary bipartite graphs on $8$ vertices}
\label{fig:EB8}
\end{figure}
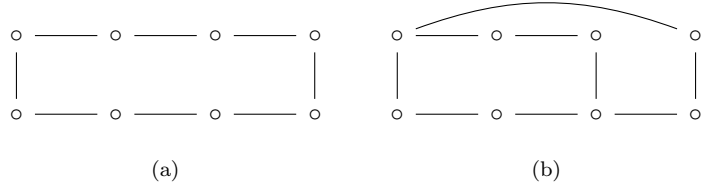
\end{Ex}

\begin{Rmk}
Even cycles are typical examples of elementary bipartite graphs, and many elementary bipartite graphs admit Hamiltonian cycles. 
But it is not the case in general.
For example, consider the graph in \Cref{NoHamil}. 
It is in fact a unique edge-minimal elementary bipartite graph on $8$ vertices other than $C_8$. 
Consult with \cite[Section 2]{MR0498248} for edge-minimal elementary bipartite graphs.
\end{Rmk}

Now we specify graphs having a central role in our study.

\begin{Def}
Let $q\geq 1$ be an integer.
    \begin{enumerate}
    \item $\mathcal F_{q,1}$ is the collection of graphs $H\neq\overline{K_{q+1}}$ such that $V(H)$ is not a union of $q$ cliques, but so is every proper subset (or equivalently the complement graph $\overline{H}$ is not colorable with $q$ colors, but so are its proper induced subgraphs).
    \item $\mathcal F_{q,2}$ consists of $H\sqcup\overline{K_{q-j}}$ for all $1\leq j\leq q$ and all elementary bipartite graphs $H$ on $2j+2$ vertices.
\end{enumerate}
\end{Def}

In the study of higher secant complexes the importance of the families $\mathcal F_{q,1}$ and $\mathcal F_{q,2}$ can be understood via the following observations.

\begin{Thm}[{\cite[Theorem 3.2]{MR2252121}}]\label{Gen}
Let $G$ be a graph on $V = \{0, 1, \ldots, r\}$. 
Then $I(\sigma_q\Delta(G))$ is minimally generated by $x_W$ for all $W\subseteq V$ such that either 
$$
G[W]=\overline{K_{q+1}}\quad\text{or}\quad G[W]\in\mathcal F_{q,1}.
$$
\end{Thm}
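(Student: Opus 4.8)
The plan is to reduce the statement to a purely combinatorial description of the minimal nonfaces of $\sigma_q\Delta(G)$ and then to read that description off from the relevant definitions. Recall that a squarefree monomial ideal has a unique minimal monomial generating set, and that for a simplicial complex $\Delta$ on $V$ the minimal generators of $I(\Delta)$ are exactly the $x_W$ with $W$ a \emph{minimal nonface}, i.e.\ $W\notin\Delta$ but $W'\in\Delta$ for every proper subset $W'\subsetneq W$, since $x_W\mid x_{W'}$ precisely when $W\subseteq W'$. So it suffices to show that $W\subseteq V$ is a minimal nonface of $\sigma_q\Delta(G)$ if and only if $G[W]=\overline{K_{q+1}}$ or $G[W]\in\mathcal F_{q,1}$.

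First I would pass to graph language. A clique of $G$ contained in $W$ is the same as a clique of $G[W]$, so the definition $\sigma_q\Delta(G)=\{C_1\cup\cdots\cup C_q:C_i\text{ a clique of }G\}$ gives that $W\in\sigma_q\Delta(G)$ if and only if the vertex set of $G[W]$ is a union of $q$ cliques (equivalently $\overline{G[W]}$ is $q$-colorable). Combined with the fact that induced subcomplexes commute with $\sigma_q$ and that induced subgraphs commute with clique complexes, membership of a subset $W'\subseteq W$ in $\sigma_q\Delta(G)$ is detected inside $G[W']$ alone. Hence $W$ is a minimal nonface of $\sigma_q\Delta(G)$ exactly when $G[W]$ is not a union of $q$ cliques while every proper induced subgraph of $G[W]$ is.

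To finish I would compare this condition with the definition of $\mathcal F_{q,1}$, which imposes the very same property on $V(H)$ after excluding the single graph $\overline{K_{q+1}}$. So I need to check that $\overline{K_{q+1}}$ is itself a minimal nonface of $\sigma_q\Delta(G)$ whenever it occurs as some $G[W]$ (its $q+1$ isolated vertices require $q+1$ singleton cliques to cover, while any proper subset has at most $q$ vertices and is therefore a face) and, conversely, that it is the only minimal nonface not already lying in $\mathcal F_{q,1}$. For the latter: a minimal nonface $W$ has $|W|\ge q+1$ because $\le q$ vertices form $\le q$ singleton cliques; and a minimal nonface on exactly $q+1$ vertices must induce $\overline{K_{q+1}}$, since one edge together with the remaining $q-1$ vertices as singletons would cover it by $q$ cliques. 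Every minimal nonface with $G[W]\ne\overline{K_{q+1}}$ then satisfies the defining property of $\mathcal F_{q,1}$ by the previous paragraph and hence lies in it, while conversely any $W$ with $G[W]\in\mathcal F_{q,1}$ is a minimal nonface by definition. Assembling these cases yields the asserted minimal generating set.

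The argument is essentially a chain of definition unwindings, so I do not expect a genuine obstacle; the only delicate point is the bookkeeping that isolates $\overline{K_{q+1}}$ as the unique minimal nonface omitted from $\mathcal F_{q,1}$, i.e.\ verifying that excluding $\overline{K_{q+1}}$ in the definition of $\mathcal F_{q,1}$ loses nothing else (in particular that the larger empty graphs $\overline{K_s}$ with $s\ge q+2$ are not minimal nonfaces, as they contain $\overline{K_{q+1}}$). If one instead follows the route of Sturmfels and Sullivant, the corresponding work is identifying the $q$-th secant ideal of the edge ideal of $G$ with $I(\sigma_q\Delta(G))$, but here that identification is already contained in the definition of $\sigma_q\Delta$ together with \cite[Corollary 3.3]{MR1749874}.
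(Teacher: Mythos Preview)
Your argument is correct. The paper does not supply a proof of this statement at all; it is quoted verbatim from Sturmfels and Sullivant \cite[Theorem 3.2]{MR2252121} and used as a black box. Your definition-unwinding is exactly the standard route: identify minimal monomial generators with minimal nonfaces, translate ``$W\in\sigma_q\Delta(G)$'' into ``$V(G[W])$ is a union of $q$ cliques'', and then observe that the resulting minimality condition is literally the defining clause of $\mathcal F_{q,1}$ together with the one excluded graph $\overline{K_{q+1}}$. The only point requiring a sentence of care is the one you flagged, namely that $\overline{K_{q+1}}$ is the \emph{unique} minimal nonface not captured by $\mathcal F_{q,1}$, and you handled it correctly by noting that any minimal nonface has at least $q+1$ vertices and that on exactly $q+1$ vertices a single edge already yields a covering by $q$ cliques. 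There is nothing to add.
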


\begin{Thm}[{\cite[Proposition 5.1]{MR1749874}}]\label{Gen_Secant}
The family $\mathcal F_{2,1}$ consists of $\overline{C_n}$ with $n\geq 5$ odd.
\end{Thm}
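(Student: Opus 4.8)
The plan is to pass to complements and recognize $\mathcal{F}_{2,1}$, up to complementation, as the family of \emph{minimally non-bipartite} graphs. By the parenthetical reformulation in the definition of $\mathcal{F}_{q,1}$, a graph $H$ belongs to $\mathcal{F}_{2,1}$ precisely when $H\neq\overline{K_3}$ and $G:=\overline{H}$ is not $2$-colorable while every proper induced subgraph of $G$ is $2$-colorable. Since a graph is $2$-colorable if and only if it is bipartite, this amounts to saying that $G$ is not bipartite but every proper induced subgraph of $G$ is bipartite, together with $G\not\cong K_3$. So it suffices to prove that the graphs that are non-bipartite yet have all proper induced subgraphs bipartite are exactly the odd cycles $C_n$ with $n\geq 3$; taking complements and discarding the case $G=C_3$ will then give the theorem.

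For the forward implication I would argue as follows. If $G$ is not bipartite then $G$ contains an odd cycle, because any graph without an odd cycle is bipartite (colour each connected component by the parity of distance to a fixed basepoint). Choose an odd cycle $C$ in $G$ of minimum length $n$; then $n\geq 3$ is odd, and I claim $C$ is chordless, so that $G[V(C)]\cong C_n$. Indeed, a chord of $C$ would split $C$ into two cycles whose lengths sum to $n+2$, hence one of which is odd and strictly shorter than $n$, contradicting minimality. Now $G[V(C)]\cong C_n$ is itself non-bipartite, so the hypothesis that every \emph{proper} induced subgraph of $G$ is bipartite forces $V(C)=V(G)$, i.e. $G\cong C_n$ with $n$ odd. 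For the converse, $C_n$ with $n\geq 3$ odd is non-bipartite, while every proper induced subgraph of $C_n$ is a disjoint union of path graphs and hence bipartite.

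Finally, I would dispose of the case $n=3$: since $\overline{C_3}=\overline{K_3}$, this graph is explicitly excluded from $\mathcal{F}_{q,1}$ when $q=2$, whereas $\overline{C_n}$ for odd $n\geq 5$ is genuinely different from $\overline{K_3}$. Combining this with the dictionary from the first paragraph yields $\mathcal{F}_{2,1}=\{\overline{C_n}:n\geq 5\text{ odd}\}$. I do not anticipate a serious obstacle here, as the content is essentially the classical characterization of bipartiteness by odd cycles; the only points demanding a little care are the chord argument showing that a shortest odd cycle is induced, and keeping track of the exclusion $H\neq\overline{K_{q+1}}$ built into the definition of $\mathcal{F}_{q,1}$.
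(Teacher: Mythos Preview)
Your argument is correct. The paper does not supply its own proof of this statement; it is quoted as a known result from \cite{MR1749874}, so there is no in-paper proof to compare against. Your route via complements---identifying $\mathcal{F}_{2,1}$ with complements of the vertex-critical non-bipartite graphs and then using the classical fact that a shortest odd cycle is induced---is the standard and entirely adequate argument, and your handling of the excluded case $\overline{C_3}=\overline{K_3}$ is exactly what the definition requires.
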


The following also serves as a criterion for determining whether a given graph is elementary bipartite by means of higher secant complexes.

\begin{Prop}\label{EBci}
Let $G$ be a graph on $2q+2$ vertices for an integer $q\geq 1$. Then the following are equivalent.
\begin{enumerate}
    \item $G$ is elementary bipartite.
    \item $I(\sigma_q\Delta(G))$ is a complete intersection ideal of two monomials of degree $q+1$.
    \item $I(\sigma_q\Delta(G))_{q+1}$ is spanned by two monomials without common variables.
\end{enumerate}
Furthermore, in this case $G$ is $\mathcal F_{q,1}$-free, the two monomials correspond to the bipartition classes of $G$, and $I(\sigma_{q+1}\Delta(G))$ is the zero ideal.
\end{Prop}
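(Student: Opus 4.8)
Here is a plan for proving \Cref{EBci}.

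The plan is to reduce the whole statement to bookkeeping with clique partitions, matchings, and vertex covers of $G$, via the combinatorial description of the faces of $\sigma_q\Delta(G)$. The key preliminary I would record is a dictionary: a subset $W\subseteq V$ is a face of $\sigma_q\Delta(G)$ exactly when $W$ is a union of at most $q$ cliques of $G$, equivalently when $W$ can be partitioned into at most $q$ cliques of $G$; and when $G$ is bipartite (the only relevant case) every clique of $G$, hence of $G[W]$, is an edge or a vertex, so that the minimum clique-partition number of $G[W]$ equals $|W|-\nu(G[W])$, and thus $W\in\sigma_q\Delta(G)$ if and only if $|W|-\nu(G[W])\le q$. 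By K\"onig's theorem $\nu(G[W])$ equals the minimum vertex-cover number of $G[W]$, so this condition says precisely that $G[W]$ has no independent set of size $q+1$. Therefore, for bipartite $G$, the non-faces of $\sigma_q\Delta(G)$ are exactly the subsets of $V$ containing an independent set of size $q+1$, so $I(\sigma_q\Delta(G))$ is generated by the monomials $x_W$ with $W$ an independent set of $G$ of size $q+1$; these sit in degree $q+1$, in accordance with \Cref{thm:Prolong}. I would also record the elementary fact that no member of $\mathcal F_{q,1}$ has at most $q+1$ vertices: a graph on $\le q$ vertices is a union of $q$ singleton cliques, a graph on $q+1$ vertices carrying an edge is covered by that edge together with $q-1$ singletons, and $\overline{K_{q+1}}$ is excluded by definition.

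With this in hand I would prove $(1)\Rightarrow(2)\Rightarrow(3)\Rightarrow(1)$. For $(1)\Rightarrow(2)$, let $G$ be elementary bipartite with bipartition $V=X\sqcup Y$, so $|X|=|Y|=q+1$. By the form of \Cref{EB} stating that $G$ is the union of its perfect matchings, $G$ has a perfect matching, hence $\nu(G)=q+1$; so by K\"onig the minimum vertex-cover number of $G$ is $q+1$ and the maximum size of an independent set is $q+1$. The independent sets attaining this size are the complements of the minimum vertex covers, which by the form of \Cref{EB} stating that $X$ and $Y$ are the only minimum vertex covers are exactly $X=V\setminus Y$ and $Y=V\setminus X$. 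By the dictionary above $I(\sigma_q\Delta(G))=(x_X,x_Y)$, and since $X\cap Y=\emptyset$ the two monomials have coprime supports, hence form a regular sequence: a complete intersection generated by two monomials of degree $q+1$. The implication $(2)\Rightarrow(3)$ is immediate, since a monomial regular sequence consists of monomials with disjoint supports and the degree $q+1$ component of $(x_A,x_B)$ is spanned by $x_A$ and $x_B$.

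For $(3)\Rightarrow(1)$, suppose $I(\sigma_q\Delta(G))_{q+1}=\langle x_A,x_B\rangle$ with $A\cap B=\emptyset$. Comparing degrees and cardinalities gives $|A|=|B|=q+1$ and $V=A\sqcup B$, and by \Cref{thm:Prolong} the sets $A$ and $B$ are the only independent sets of $G$ of size $q+1$. Then $G$ has no independent set of size $\ge q+2$, since such a set would have more than two subsets of size $q+1$, all independent; so $A$ and $B$ are the maximum independent sets of $G$, and, being independent and partitioning $V$, they exhibit $G$ as a bipartite graph with parts $A,B$. To see that $G$ is elementary for this bipartition, I would verify the defining condition in \Cref{EB}: given $\emptyset\subsetneq A'\subsetneq A$, the set $A'\cup(B\setminus N_G(A'))$ is independent --- a vertex of $A'$ has all its neighbours in $N_G(A')$ --- of cardinality $|A'|+(q+1)-|N_G(A')|$, so if $|N_G(A')|\le|A'|$ this cardinality is $\ge q+1$ and the set must equal $A$ or $B$. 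If it equals $A$, then $B\setminus N_G(A')=\emptyset$, so $N_G(A')=B$ and $|N_G(A')|=q+1>q\ge|A'|$, a contradiction; if it equals $B$, then $A'\subseteq B$, contradicting $\emptyset\ne A'\subseteq A$. Hence $|N_G(A')|>|A'|$ for all such $A'$, and $G$ is elementary bipartite.

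It remains to treat the ``furthermore'', assuming $(1)$ with bipartition $V=X\sqcup Y$. The computation of $I(\sigma_q\Delta(G))_{q+1}$ above identifies the two monomials in $(2)$ and $(3)$ with $x_X$ and $x_Y$, i.e.\ with the bipartition classes. Since $I(\sigma_q\Delta(G))=(x_X,x_Y)$ is generated in degree $q+1$, \Cref{Gen} forces $G$ to be $\mathcal F_{q,1}$-free: otherwise some minimal generator would be an $x_W$ with $G[W]\in\mathcal F_{q,1}$, necessarily of degree $|W|\ge q+2$ by the first paragraph. Finally, a perfect matching of $G$ is a set of $q+1$ edges of $G$ whose union is $V$, so $V\in\sigma_{q+1}\Delta(G)$, whence $\sigma_{q+1}\Delta(G)=2^{V}$ and $I(\sigma_{q+1}\Delta(G))=(0)$; alternatively one can deduce the vanishing from \Cref{Eventually}, since $\sigma_q\Delta(G)$ coincides with $\sigma_q$ of the complete bipartite clique complex on $X\sqcup Y$. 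The step I expect to require the most care is the dictionary in the first paragraph --- the chain ``$W$ is a face'' $\Leftrightarrow$ ``$W$ partitions into $\le q$ cliques of $G$'' $\Leftrightarrow$ ``$G[W]$ has no $(q+1)$-element independent set'', in particular the reduction from arbitrary clique covers to clique partitions and the K\"onig-theorem step; once it is set up, all three implications and the ``furthermore'' are short.
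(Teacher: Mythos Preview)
Your proposal is correct, and the overall shape --- prove $(1)\Rightarrow(2)\Rightarrow(3)\Rightarrow(1)$ and read off the ``furthermore'' --- matches the paper's. The main difference lies in how $(1)\Rightarrow(2)$ is obtained. The paper invokes \Cref{Gen} (the Sturmfels--Sullivant description of minimal generators) and first proves $\mathcal F_{q,1}$-freeness of $G$ using \Cref{EB}\eqref{EB2}: if $H$ were an induced $\mathcal F_{q,1}$-subgraph, then $\overline H$ would have no $(q+1)$-clique, forcing $V(H)$ to miss a vertex from each side of the bipartition, whence a perfect matching of the appropriate $G-u-w$ colours $\overline H$ with $q$ colours. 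Only after this does the paper conclude that the generators are $x_U,x_W$. You bypass this entirely by setting up the K{\"o}nig-based dictionary for bipartite graphs (faces of $\sigma_q\Delta(G)$ $\Leftrightarrow$ no independent set of size $q+1$) and then applying \Cref{EB}\eqref{EB5}, which hands you the two $(q+1)$-independent sets directly; $\mathcal F_{q,1}$-freeness becomes a consequence rather than a prerequisite. This is a genuinely cleaner route to $(2)$ and makes the argument more self-contained, at the cost of importing K{\"o}nig's theorem. For $(3)\Rightarrow(1)$ your argument is essentially the paper's ``easy to check \Cref{EB}\eqref{EB1}'' written out in full; the paper is terser but identical in content.
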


\begin{proof}
Suppose that $G$ is elementary bipartite with bipartition $V(G)=U\sqcup W$. 
By \Cref{Gen} and \Cref{EB}\eqref{EB1}, it is enough to show that $G$ is $\mathcal F_{q,1}$-free.  
To the contrary assume that for some induced subgraph $H$ of $G$ the complement graph $\overline{H}$ is not colorable with $q$ colors but has no cliques of $q+1$ vertices. 
The last condition tells us that $V(H)\subseteq V(G)\setminus\{u,w\}$ for some $u\in U$ and $w\in W$. 
By \Cref{EB}\eqref{EB2} we see that $\overline{H}$ is colorable with $q$ colors, a contradiction. 
Thus, $I(\sigma_q\Delta(G))$ becomes the desired complete intersection ideal. 
Moreover, by definition $G$ carries at least one perfect matching so that $V(G)$ is a union of $q+1$ edges, which implies that $I(\sigma_{q+1}\Delta(G))=0$.

Let us say that there are only two monomials in $I(\sigma_q\Delta(G))_{q+1}$ and that they consist of distinct variables. Note that the two monomials give a balanced bipartition $V(G)=U\sqcup W$ of $G$ in view of \Cref{Gen}. 
Then it is easy to check \Cref{EB}\eqref{EB1} since $U$ and $W$ are the only sets of $q+1$ vertices with no edges in $G$, hence $G$ is elementary bipartite.
\end{proof}

\begin{Prop}\label{F2toF1}
Let $q\geq 1$ be an integer. 
Then if $G\in\mathcal F_{q,2}$, then $G/e\in\mathcal F_{q,1}$ for every edge $e$ of $G$ not lying in any induced cycle $C_4$ of $G$. 
\end{Prop}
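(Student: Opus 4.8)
The plan is to reduce the statement to two structural facts about the contracted graph and then verify the defining conditions of $\mathcal F_{q,1}$ by hand. Since $G\in\mathcal F_{q,2}$, we may write $G=H\sqcup\overline{K_{q-j}}$ for some $1\le j\le q$ and some elementary bipartite graph $H$ on $2j+2$ vertices. As $\overline{K_{q-j}}$ has no edges, $e$ is an edge of $H$ and $G/e=(H/e)\sqcup\overline{K_{q-j}}$ with $|V(H/e)|=2j+1$; moreover any induced $C_4$ of $G$ through $e$ lies inside $H$, and because $H$ is bipartite every $4$-cycle of $H$ is induced, so the hypothesis amounts to saying that $e$ lies on no $4$-cycle of $H$. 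Write $e=\{a,b\}$ with $a,b$ in the two colour classes $U,W$ of $H$. The two facts I would establish are: \textbf{(A)} $H/e$ is triangle-free, hence in particular simple; and \textbf{(B)} $H/e$ is \emph{factor-critical}, that is, $(H/e)-z$ has a perfect matching for every vertex $z$ of $H/e$ (so in particular $\nu(H/e)=j$).

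For \textbf{(A)}: a triangle of $H/e$ cannot avoid the contracted vertex $v$, since $H-a-b$ is bipartite; and if it contains $v$, pulling its two other vertices back to their neighbours among $\{a,b\}$ produces a $4$-cycle of $H$ through $e$, contradicting the hypothesis. For \textbf{(B)}: if $z=v$ then $(H/e)-v=H-a-b$, which has a perfect matching by \Cref{EB}\eqref{EB2}; if $z$ is an ordinary vertex, say $z\in W\setminus\{b\}$ (the case $z\in U\setminus\{a\}$ being symmetric), then $(H/e)-z=(H-z)/e$, and the key observation is a matching transfer: in any perfect matching of $H-a-z$ the vertex $b$ is matched to some neighbour $c\in U\setminus\{a\}$ — there is no other option since $a$ has been deleted — and replacing the edge $\{b,c\}$ by $\{v,c\}$ turns that matching into a perfect matching of $(H-z)/e$. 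Since $H-a-z$ has a perfect matching by \Cref{EB}\eqref{EB2}, so does $(H/e)-z$.

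With \textbf{(A)} and \textbf{(B)} in hand I would check the three requirements of $\mathcal F_{q,1}$ for $G/e$. First, $|V(G/e)|=(2j+1)+(q-j)=q+j+1>q+1$, so $G/e\ne\overline{K_{q+1}}$. For the clique-cover conditions note that, writing $W=W'\sqcup W''$ with $W'\subseteq V(H/e)$ and $W''\subseteq V(\overline{K_{q-j}})$, the set $W$ is a union of $q$ cliques of $(G/e)[W]$ if and only if $W'$ is a union of $q-|W''|$ cliques of $(H/e)[W']$, because the vertices of $W''$ are isolated in $G/e$ and each forces a clique of its own. Applying this to $W=V(G/e)$ one would need $V(H/e)$, a set of $2j+1$ vertices, to be a union of $j$ cliques, which is impossible by \textbf{(A)} since in a triangle-free graph cliques have at most two vertices and $2j<2j+1$; hence $V(G/e)$ is not a union of $q$ cliques. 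Since unions of $q$ cliques are closed under passing to subsets, it remains to treat proper subsets missing a single vertex $w$: if $w$ lies in the $\overline{K_{q-j}}$-summand we need $V(H/e)$ to be a union of $j+1$ cliques, supplied by a size-$j$ matching together with one singleton; if $w\in V(H/e)$ we need $V(H/e)\setminus\{w\}$, of size $2j$, to be a union of $j$ cliques, supplied by a perfect matching of $(H/e)-w$ from \textbf{(B)}. Hence $G/e\in\mathcal F_{q,1}$.

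I expect the matching transfer in \textbf{(B)} to be the only step requiring real care: one must confirm that a perfect matching of $H-a-z$ necessarily pairs $b$ with a vertex other than $a$ (immediate once $a$ is deleted) and that swapping $\{b,c\}$ for $\{v,c\}$ yields a matching covering exactly $V((H/e)-z)$, which is where the bookkeeping of the two colour classes — and the fact that exactly one vertex is removed from each side — is used. The triangle-freeness in \textbf{(A)} and the clique-covering counts are routine once the disjoint summand $\overline{K_{q-j}}$ has been absorbed via the ``one singleton per isolated vertex'' reduction.
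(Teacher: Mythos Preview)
Your proof is correct and takes a genuinely different route from the paper's. You work entirely inside graph theory: after splitting off the isolated vertices, you prove directly that the contracted elementary bipartite graph $H/e$ is triangle-free and factor-critical, and then verify the clique-cover conditions defining $\mathcal F_{q,1}$ by hand via the ``one singleton per isolated vertex'' reduction. The paper instead routes the argument through the Stanley--Reisner picture: by \Cref{EBci} the complex $\sigma_q\Delta(G)$ has exactly the two bipartition classes as minimal nonfaces, so its edge contraction $\sigma_q\Delta(G)/e$ is the boundary of the full simplex on $2q+1$ vertices; since $\sigma_q\Delta(G)/e\subseteq\sigma_q\Delta(G/e)$, the latter is either that boundary or the whole simplex, and the simplex is ruled out because $\Delta(G/e)$ is at most one-dimensional (this is exactly your fact \textbf{(A)}). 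What your approach buys is an explicit structural statement---factor-criticality of $H/e$---that the paper never isolates, and complete independence from the secant-complex formalism; what the paper's approach buys is brevity and the avoidance of the matching-transfer bookkeeping in \textbf{(B)}, since the ``every proper subset is a union of $q$ cliques'' half comes for free from the containment $\sigma_q\Delta(G)/e\subseteq\sigma_q\Delta(G/e)$ rather than from constructing matchings.
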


\begin{proof}
We may assume that $G$ has no isolated vertices. Observe that $\sigma_q\Delta(G)/e$ is determined by a single nonface of $2q+1$ vertices. 
Since $\sigma_q\Delta(G)/e\subseteq\sigma_q\Delta(G/e)$, it is enough to show that $\sigma_q\Delta(G/e)$ is not a simplex. Indeed, it is the case because $\Delta(G/e)$ consists of only vertices and edges due to the $C_3$-freeness of $G$ and the choice of $e$.
\end{proof} 

\begin{Rmk}
In the proposition above the edge contracted must avoid induced cycles $C_4$. For instance, consider the following graph.
$$
\begin{tikzcd}
    \circ \ar[d,no head] \ar[r,no head] & \circ \ar[r,no head] & \circ \ar[r,no head] \ar[d,no head] & \circ \ar[d,no head] \\ 
    \circ \ar[r,no head] & \circ \ar[r,no head] & \circ \ar[r,no head] & \circ
\end{tikzcd}
$$
It has an induced cycle $H=C_4$, and if $e$ is any edge of $H$, then $G/e$ has its vertices covered by $3$ cliques including $V(H)$. 
\end{Rmk}

Now we establish a lemma that is used in our main theorem. 
\begin{Lem}\label{bipartite_edge_contraction}
Let $G$ be a graph on $r+1$ vertices. 
Suppose that every edge contractions of $G$ is bipartite.
Then $G$ is either a forest or $C_n\sqcup\overline{K_{r-n+1}}$ for some odd integer $3\leq n\leq r+1$.
\end{Lem}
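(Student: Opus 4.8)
The plan is to argue by contradiction: suppose $G$ is neither a forest nor of the form $C_n\sqcup\overline{K_{r-n+1}}$ with $n$ odd, and produce an edge whose contraction is non-bipartite. First I would reduce to the connected case, since both conclusions are local to a connected component: if $G$ has a component that is not a single edge or isolated vertex, contractions within that component can only affect that component, and a non-bipartite connected graph $C$ contracts to a non-bipartite graph unless $C$ itself is already very restricted. So assume $G$ is connected with at least one cycle; the goal becomes showing that if every edge contraction of $G$ is bipartite, then $G=C_n$ with $n$ odd.

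The key observation is the following: contracting an edge $e=\{u,v\}$ creates a triangle (hence an odd cycle, hence non-bipartiteness) precisely when $u$ and $v$ have a common neighbor, i.e.\ when $e$ lies on a triangle of $G$; and more generally it creates an odd cycle whenever $e$ lies on an \emph{even} cycle of $G$ (the cycle of length $2k$ through $e$ becomes a cycle of length $2k-1$ after contraction, and one must check it stays induced-free or at least that some odd closed walk survives as an odd cycle). So the hypothesis forces: (i) $G$ is triangle-free, and (ii) $G$ has no even cycle through any edge — but every edge of a $2$-edge-connected graph lies on some cycle, and if $G$ contains two distinct cycles they can be combined to force an even cycle through a shared edge. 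I would make this precise by a case analysis on the block (2-connected component) structure of $G$: a block with at least two cycles, or with a cycle plus an attached path, yields an edge on an even cycle after a short combinatorial argument; hence each block is a single edge or a single cycle $C_m$. Two cycles joined at a cut vertex, or a cycle with a pendant edge, again let us route an even cycle through some edge after one contraction (contract an edge of the path/pendant region to merge structure), contradiction. This forces $G$ to be a single cycle $C_m$, and then $m$ odd is forced because $C_m$ with $m$ even is bipartite but contracting one edge gives $C_{m-1}$ with $m-1$ odd — wait, that is allowed; rather $m$ must be odd so that $C_m$ itself together with the stated form matches, but in fact one checks $G=C_m$ for any $m$ has all contractions $=C_{m-1}$, so both parities satisfy the hypothesis, and $C_m$ even is itself a forest-free bipartite graph not of the listed form, so I must double-check: $C_m$ with $m$ even is \emph{not} a forest and not $C_n\sqcup\overline{K}$ with $n$ odd, yet all its contractions are bipartite — this would be a counterexample, so the lemma as stated must be read with $G$ allowed to equal $C_n$ only for $n$ odd because an even cycle $C_n$ \emph{is} covered: $C_n = C_n \sqcup \overline{K_0}$, and the statement only claims $n$ odd. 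So I need to also rule out even $n$, which means contracting a single edge of $C_{2k}$ is not the only move — one should contract to get $C_{2k-1}$, fine, but the hypothesis is only about \emph{edge contractions of $G$}, all of which are odd cycles, hence non-bipartite! So $C_{2k}$ violates the hypothesis. Good — that is the crux: an even cycle has \emph{non-bipartite} edge contractions, so it is excluded, leaving only odd cycles.

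The main obstacle I anticipate is the combinatorial heart of step (ii): showing that any connected triangle-free graph that is not a single cycle and not a forest must contain an edge lying on an even cycle, equivalently that its edge contractions cannot all be bipartite. The clean way is via blocks and ear decompositions — a $2$-connected non-cycle block has an ear decomposition with at least two ears, and combining the first cycle with an ear produces two cycles sharing a path, whose symmetric difference is a cycle; by a parity argument at least one of the three cycles in this "theta subgraph" is even and passes through a common edge of the other two, which after contraction yields an odd cycle surviving as an induced-free odd closed walk, hence non-bipartiteness. I would also need to handle the interface between blocks (cut vertices) and pendant trees hanging off cycles, where one first contracts an edge to pull a cycle and a path into a common even cycle. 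I expect the induced-subgraph bookkeeping (ensuring the odd cycle we find after contraction genuinely witnesses non-bipartiteness, which only needs \emph{some} odd cycle, not an induced one, so this is actually mild) to be routine, and the block-decomposition dichotomy to be the part requiring the most care.
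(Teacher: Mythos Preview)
Your even-cycle observation is correct and is half of what is needed: if $G$ contains an even cycle $C$, contracting an edge of $C$ produces an odd cycle in $G/e$, so such $G$ are excluded. But the surrounding argument has errors. The claim that contracting $e=\{u,v\}$ creates a triangle ``precisely when $u$ and $v$ have a common neighbor, i.e.\ when $e$ lies on a triangle of $G$'' is false: if $u,v,w$ is a triangle, contracting $\{u,v\}$ collapses it to a single edge, not a triangle (what you meant is that $e$ lies on a $C_4$). Consequently your derived conclusion ``(i) $G$ is triangle-free'' does not follow, and indeed $G=C_3$ satisfies the hypothesis. Your treatment of ``a cycle with a pendant edge'' is similarly off: you try to route an even cycle after a contraction, but an odd cycle with a pendant contains no even cycle at all, before or after contracting the pendant.

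The paper's proof avoids all block and ear machinery with one further observation you are missing. Assume $G$ is not a forest and take an \emph{induced} cycle $C_n$. The even case you already have: contracting an edge of $C_n$ leaves the odd $C_{n-1}$ in $G/e$. For $n$ odd, suppose $G$ has any edge $e$ not on $C_n$; since $C_n$ is induced, $e$ shares at most one vertex with $C_n$, so after contracting $e$ the cycle $C_n$ persists as an odd cycle in $G/e$ --- contradiction. Hence every edge of $G$ lies on $C_n$, and $G=C_n\sqcup\overline{K_{r-n+1}}$ with $n$ odd. The step you were circling around via theta subgraphs and ear decompositions is simply that an odd induced cycle survives contraction of any edge outside it; once you see this, the structural detour is unnecessary.
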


\begin{proof}
Assume that $G$ is not a forest.
Then $G$ has an induced cycle $C_n$ for some $3\leq n\leq r+1$. 
If $n$ were even, and if $e$ were an edge of $C_n$, then $G/e$ would have an (induced) odd cycle $C_{n-1}$, a contraction.
On the other hand, if $n$ were odd, and if $G$ had an edge $e$ other than those of $C_n$, then $C_n$ would still form an odd cycle of $G/e$, a contradiction. Consequently, the only possibility is that $G=C_n\sqcup\overline{K_{r-n+1}}$ with $3\leq n\leq r+1$ odd.
\end{proof}

\begin{Ex}
We illustrate the families $\mathcal F_{q,1}$ and $\mathcal F_{q,2}$ for $q$ small in \Cref{tab:Ftable}.

\begin{table}[h!]
    \centering
    \begin{tabular}{c|lll}
        $q$                 & $1$           & $2$                                                   & $\geq 3$ \\ \hline
        \\[-1em]
        $\mathcal F_{q,1}$  & $\emptyset$   & $\overline{C_n}$, $n\geq 5$ odd (\Cref{Gen_Secant})   & cf.\ \Cref{F2toF1} \\
        $\mathcal F_{q,2}$  & $C_4$         & $C_4\sqcup\overline{K_1}$ and graphs in \Cref{fig:EB6}& See \Cref{EBstructure} \\
    \end{tabular}
    \caption{Families $\mathcal F_{q,1}$ and $\mathcal F_{q,2}$ for $q$ small}
    \label{tab:Ftable}
\end{table}

It would not be easy to describe $\mathcal F_{q,1}$ for the integers $q\geq 3$. 
However computationally it is rather simple to find all the members of $\mathcal F_{q,2}$ for the integers $q\geq 3$, for the structure of elementary bipartite graphs is well understood by \Cref{EBstructure}. 
For example, $\mathcal F_{3,2}$ is the set of the following graphs.
\begin{enumerate}
    \item $C_4\sqcup\overline{K_2}$,
    \item $H\sqcup\overline{K_1}$ for graphs $H$ in \Cref{fig:EB6}, and
    \item $26$ nonisomorphic elementary bipartite graphs on $8$ vertices.
\end{enumerate}
\end{Ex}

Now we summarize algebraic feature of graphs in $\mathcal F_{q,1}$ and $\mathcal F_{q,2}$ and cycles $C_{2q+c}$ with $c\geq 3$ in terms of Betti tables. To this end we need a lemma. 
Recall that for simplicial complexes $\Delta_1$ and  $\Delta_2$ on $V_1$ and $V_2$, respectively, the disjoint union $\Delta_1\sqcup\Delta_2$ is a simplicial complex on $V_1\sqcup V_2$ defined by
$$
\Delta_1\sqcup\Delta_2=\{F\subseteq V_1\sqcup V_2:\textup{either }F\in\Delta_1\textup{ or }F\in\Delta_2\}.
$$
Then if for each $i=1,2$ we write $\Bbbk[x_{i,0}, x_{i,1}, \ldots,x_{i,r_i}]$ for the polynomial ring assigned to the vertex set of $\Delta_i$, then the vertex set of $\Delta_1\sqcup\Delta_2$ carries the polynomial ring $\Bbbk[x_{1,0},x_{1,1},\ldots,x_{1,r_1},x_{2,0},x_{2,1},\ldots,x_{2,r_2}]$ in $r_1+r_2+2$ variables for itself.

\begin{Lem}\label{Isol}
Let $\Delta_1$ and $\Delta_2$ be simplicial complexes on vertex sets $V_1$ and $V_2$, respectively.
Then we obtain
$$I(\sigma_q(\Delta_1\sqcup\Delta_2))= \sum_{j = -1}^qI(\sigma_{q-j-1}\Delta_1)\cdot I(\sigma_j\Delta_2)$$
where for convenience we set $I(\sigma_{-1}\Delta)$ to be the unit ideal and $I(\sigma_0\Delta)$ to be the maximal homogeneous ideal.
\end{Lem}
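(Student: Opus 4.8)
The plan is to prove the identity by comparing the two monomial ideals on the level of monomials. Both sides are monomial ideals: the left side is the squarefree Stanley--Reisner ideal of a complex, and the right side is a sum of products of monomial ideals. The first ingredient I would record is the structural fact that a subset $W\subseteq V_1\sqcup V_2$, written uniquely as $W=W_1\sqcup W_2$ with $W_i\subseteq V_i$, is a face of $\sigma_q(\Delta_1\sqcup\Delta_2)$ if and only if there are integers $a,b\geq 0$ with $a+b\leq q$ such that $W_1\in\sigma_a\Delta_1$ and $W_2\in\sigma_b\Delta_2$. This holds because each face of $\Delta_1\sqcup\Delta_2$ lies entirely in $V_1$ or entirely in $V_2$, so any representation $W=F_1\cup\cdots\cup F_q$ with $F_i\in\Delta_1\sqcup\Delta_2$ separates into the $F_i$ contained in $V_1$, whose union is $W_1$, and those contained in $V_2$, whose union is $W_2$. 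Here it is convenient to also set $\sigma_0\Delta=\{\emptyset\}$ and $\sigma_{-1}\Delta=\emptyset$ (the void complex), which is consistent with the stated conventions $I(\sigma_0\Delta)=(x_0,\dots,x_r)$ and $I(\sigma_{-1}\Delta)=S$, and to note the monotonicity $\sigma_a\Delta\subseteq\sigma_{a+1}\Delta$ for all $a\geq -1$ (using $\emptyset\in\Delta$).

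Next I would introduce, for each $W_i\subseteq V_i$, the minimal level $\alpha_i(W_i)=\min\{a\geq 0: W_i\in\sigma_a\Delta_i\}$, which is finite since $W_i=\bigcup_{v\in W_i}\{v\}\in\sigma_{|W_i|}\Delta_i$, and which is $0$ exactly when $W_i=\emptyset$. By monotonicity, for every integer $s\geq -1$ one has $x_{W_i}\in I(\sigma_s\Delta_i)$ if and only if $\alpha_i(W_i)\geq s+1$, where the boundary cases $s=-1$ and $s=0$ reduce to $I(\sigma_{-1}\Delta_i)=S_i$ and $I(\sigma_0\Delta_i)=(x_v:v\in V_i)$ respectively. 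Combining this with the face description above, $x_W\in I(\sigma_q(\Delta_1\sqcup\Delta_2))$ if and only if $\alpha_1(W_1)+\alpha_2(W_2)\geq q+1$.

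It then remains to establish the two inclusions on monomials. For ``$\supseteq$'' I would observe that a monomial in the summand $I(\sigma_{q-j-1}\Delta_1)\cdot I(\sigma_j\Delta_2)$ is divisible by a product $x_{W_1}x_{W_2}=x_{W_1\sqcup W_2}$ with $W_1\subseteq V_1$, $W_2\subseteq V_2$, $\alpha_1(W_1)\geq q-j$, and $\alpha_2(W_2)\geq j+1$; hence $\alpha_1(W_1)+\alpha_2(W_2)\geq q+1$, so $x_{W_1\sqcup W_2}$, and therefore the original monomial, lies in $I(\sigma_q(\Delta_1\sqcup\Delta_2))$. For ``$\subseteq$'' it suffices to treat a squarefree minimal generator $x_W$ of $I(\sigma_q(\Delta_1\sqcup\Delta_2))$; writing $W=W_1\sqcup W_2$ we have $\alpha_1(W_1)+\alpha_2(W_2)\geq q+1$, so the integer interval $[\,q-\alpha_1(W_1),\ \alpha_2(W_2)-1\,]$ is nonempty, and since $\alpha_1(W_1),\alpha_2(W_2)\geq 0$ it meets $\{-1,0,\dots,q\}$. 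Choosing $j$ in this intersection gives $x_{W_1}\in I(\sigma_{q-j-1}\Delta_1)$ and $x_{W_2}\in I(\sigma_j\Delta_2)$, so $x_W=x_{W_1}x_{W_2}$ lies in the $j$-th summand.

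I expect the only delicate point to be the bookkeeping around the degenerate indices, i.e.\ when $j=-1$ or $q-j-1\in\{-1,0\}$: one must check that the chosen interval always meets $\{-1,\dots,q\}$ and that the equivalence $x_{W_i}\in I(\sigma_s\Delta_i)\Leftrightarrow\alpha_i(W_i)\geq s+1$ genuinely persists at $s=-1,0$, which is precisely where the conventions $I(\sigma_{-1}\Delta)=S$ and $I(\sigma_0\Delta)=(x_0,\dots,x_r)$ enter. An alternative, possibly cleaner, packaging I might use instead is to write $\sigma_q(\Delta_1\sqcup\Delta_2)=\bigcup_{a+b=q}(\sigma_a\Delta_1)\ast(\sigma_b\Delta_2)$, use that taking Stanley--Reisner ideals turns unions of complexes into intersections of ideals and that $I(\Gamma\ast\Gamma')=I(\Gamma)+I(\Gamma')$ for complexes on disjoint vertex sets, and then derive the ``staircase'' distributive identity $\bigcap_{a+b=q}\bigl(I(\sigma_a\Delta_1)+I(\sigma_b\Delta_2)\bigr)=\sum_{j=-1}^{q}I(\sigma_{q-j-1}\Delta_1)\cdot I(\sigma_j\Delta_2)$, which again reduces to the same comparison $\alpha_1(W_1)+\alpha_2(W_2)\geq q+1$.
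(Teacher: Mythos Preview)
Your proof is correct. You argue directly on the monomial side by introducing the minimal-level function $\alpha_i(W_i)=\min\{a\geq 0:W_i\in\sigma_a\Delta_i\}$ and reducing both inclusions to the single criterion $\alpha_1(W_1)+\alpha_2(W_2)\geq q+1$. The paper instead establishes the Stanley--Reisner dual statement on the complex side,
\[
\sigma_q(\Delta_1\sqcup\Delta_2)=\bigcap_{j=-1}^q\Bigl((\sigma_{q-j-1}\Delta_1\ast 2^{V_2})\cup(2^{V_1}\ast\sigma_j\Delta_2)\Bigr),
\]
which is precisely the ``alternative packaging'' you sketch at the end; for the nontrivial inclusion the paper picks the minimal $k$ with $F\cap V_2\in\sigma_k\Delta_2$, which is exactly your $\alpha_2(F\cap V_2)$. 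So the two arguments are dual rather than genuinely different. Your version has the modest advantage of being self-contained: it handles the boundary conventions $s\in\{-1,0\}$ explicitly and does not rely on the (easy but left implicit in the paper) passage from the complex identity to the ideal identity, which uses that Stanley--Reisner takes intersections of complexes to sums of ideals and that for monomial ideals in disjoint sets of variables the intersection equals the product.
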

\begin{proof}
One can see that it is enough to show the equality
    $$\sigma_q(\Delta_1\sqcup\Delta_2)=\bigcap_{j=-1}^q(\sigma_{q-j-1}\Delta_1\ast 2^{V_2})\cup(2^{V_1}\ast\sigma_j\Delta_2),$$
    where $\sigma_{-1}\Delta$ and $\sigma_0\Delta$ are naturally understood by $I(\sigma_{-1}\Delta)$ and $I(\sigma_0\Delta)$, respectively.

Let $F$ be an arbitrary face of $\sigma_q(\Delta_1\sqcup\Delta_2)$. By definition every face of $\Delta_1\sqcup\Delta_2$ lies in either $\Delta_1$ or $\Delta_2$, which implies that 
$$F=F_1\cup F_2$$ 
for some $F_1\in\sigma_{q-k}\Delta_1$ and $F_2\in\sigma_k\Delta_2$ with $0\leq k\leq q$. If $-1\leq j\leq k-1$, then $F_1\in\sigma_{q-j-1}\Delta_1$, and if $k\leq j\leq q$, then $F_2\in\sigma_j\Delta_2$. 

On the other hand, choose any face $F$ of the simplicial complex on the right-hand side. Put 
$$
k=\min\{-1\leq j\leq q:F\in 2^{V_1}\ast\sigma_j\Delta_2\}
$$
so that $F\in(\sigma_{q-k}\Delta_1\ast 2^{V_2})\cap(2^{V_1}\ast\sigma_k\Delta_2)\subset\sigma_q(\Delta_1\sqcup\Delta_2)$.
\end{proof}

The desired Betti tables are computed as follows.

\begin{Prop}\label{FBettiTable}
Let $G$ be a graph and $q\geq 1$ be an integer. Then the following hold.
\begin{enumerate}
    \item\label{FBettiTable1} If $G\in\mathcal F_{q,1}$, then the Betti table of $S(\sigma_q\Delta(G))$ is given by \textup{\Cref{F1BettiTable}}, where $r+1>q+1$ is the number of vertices in $G$.
    \item\label{FBettiTable2} If $G\in\mathcal F_{q,2}$, then the Betti table of $S(\sigma_q\Delta(G))$ is given by \textup{\Cref{F2BettiTable}}, where $0\leq s<q$ is the number of isolated vertices in $G$.
    \item\label{FBettiTable3} If $G=C_{2q+c}$ for an integer $c\geq 3$, then the Betti table of $S(\sigma_q\Delta(G))$ is given by \textup{\Cref{DelPezzo}}.
\end{enumerate}
\end{Prop}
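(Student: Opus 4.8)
The plan treats the three parts by separate mechanisms, using \Cref{Gen} to control the minimal generators of $I(\sigma_q\Delta(G))$ throughout. For part \eqref{FBettiTable1} I would show the ideal is principal: by \Cref{Gen} its minimal generators are the monomials $x_W$ with $G[W]=\overline{K_{q+1}}$ or $G[W]\in\mathcal F_{q,1}$, and in either case $W$ fails to be a union of $q$ cliques of $G$; since $G\in\mathcal F_{q,1}$ the only such subset of $V(G)$ is $V(G)$ itself, so $I(\sigma_q\Delta(G))=(x_{V(G)})$ is generated in the single degree $r+1$, $S(\sigma_q\Delta(G))$ is a hypersurface ring, and its resolution $0\to S(-(r+1))\to S\to S(\sigma_q\Delta(G))\to 0$ is \Cref{F1BettiTable}.

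For part \eqref{FBettiTable2}, write $G=H\sqcup\overline{K_s}$ with $H$ elementary bipartite on $2(q-s)+2$ vertices (the case $s=0$ is immediate from \Cref{EBci}) and let $N$ be the set of the $s$ isolated vertices. I would pin down $I(\sigma_q\Delta(G))$ via \Cref{Isol}: since $H$ has a perfect matching, $\sigma_k\Delta(H)$ is a simplex for $k\ge q-s+1$, and $\sigma_j\Delta(\overline{K_s})$ is a simplex for $j\ge s$, so in the sum of \Cref{Isol} only the index $j=s-1$ survives, giving $I(\sigma_q\Delta(G))=I(\sigma_{q-s}\Delta(H))\cdot I(\sigma_{s-1}\Delta(\overline{K_s}))=(x_A,x_B)\cdot(x_N)=(x_{A\cup N},x_{B\cup N})$ by \Cref{EBci}, where $V(H)=A\sqcup B$ is the bipartition with $|A|=|B|=q-s+1$. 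This monomial ideal has two incomparable generators of degree $q+1$ whose least common multiple has degree $2q-s+2$, so its Taylor complex is minimal and $S(\sigma_q\Delta(G))$ has resolution $0\to S(-(2q-s+2))\to S(-(q+1))^{2}\to S\to S(\sigma_q\Delta(G))\to 0$, which is \Cref{F2BettiTable}.

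Part \eqref{FBettiTable3} is the substantive one. Setting $n=2q+c$, I would apply Hochster's formula (\Cref{thm:Homology}) to $\sigma_q\Delta(C_n)$, using $(\sigma_q\Delta(C_n))[W]=\sigma_q\Delta(C_n[W])$, and split the sum over subsets $W$ according to whether $W\subsetneq V(C_n)$ or $W=V(C_n)$. For $W\subsetneq V(C_n)$ the graph $C_n[W]$ is a disjoint union of $d=d(W)$ paths, hence chordal, so $\sigma_q\Delta(C_n[W])$ is $(q+1)$-regular by \Cref{thm:2Regular} and contributes nothing in homological rows $j\ge q+1$; moreover $I(\sigma_q\Delta(F))$ is generated in degree $q+1$ for any disjoint union of paths $F$ (being $(q+1)$-regular with no forms of degree $\le q$ by \Cref{thm:Prolong}) and paths have no induced $C_4$, so \Cref{cor:EdgePreserve} lets me contract each path to a point, giving $\widetilde H_{q-1}(\sigma_q\Delta(C_n[W]))\cong\widetilde H_{q-1}(\sigma_q\Delta(\overline{K_d}))\cong\Bbbk^{\binom{d-1}{q}}$, the last step since $\sigma_q\Delta(\overline{K_d})$ is the $(q-1)$-skeleton of a $(d-1)$-simplex. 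For $W=V(C_n)$ the same mechanism applies: for every $m\ge 2q+2$ the ideal $I(\sigma_q\Delta(C_m))$ is generated in degree $q+1$ (by \Cref{Gen}, once one checks via a clique-cover count that no induced subgraph of $C_m$ lies in $\mathcal F_{q,1}$) and the edges of $C_m$ lie in no induced $C_4$ once $m\ge 5$, so contracting one edge at a time yields $\widetilde H_\ast(\sigma_q\Delta(C_n))\cong\widetilde H_\ast(\sigma_q\Delta(C_{2q+1}))$ for $q\ge 2$ (the chain ends at $C_4$ when $q=1$); since $C_{2q+1}\in\mathcal F_{q,1}$ for $q\ge 2$, part \eqref{FBettiTable1} presents $\sigma_q\Delta(C_{2q+1})$ as the boundary of a $2q$-simplex, and $\sigma_1\Delta(C_4)=\Delta(C_4)\simeq S^1$, so in all cases $\widetilde H_{2q-1}(\sigma_q\Delta(C_n))\cong\Bbbk$ with all other reduced homology vanishing.

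Assembling these inputs, Hochster's formula gives $\beta_{0,0}=1$ and $\beta_{c,2q}=1$ (from $W=V(C_n)$, where $i+j=n$), vanishing of all remaining entries with $j\ge q+1$, and $\beta_{i,q}=\sum_{|W|=i+q}\binom{d(W)-1}{q}$. Since $d(W)$ is the number of cyclic runs of $S:=V(C_n)\setminus W$, grouping by the number $\rho$ of runs and using that there are $\frac{n}{n-t}\binom{n-t}{\rho}\binom{t-1}{\rho-1}$ subsets of a cyclic $n$-set of size $t=q+c-i$ with $\rho$ runs turns the row $j=q$ into one binomial sum; it vanishes for $i\ge c$ and for $i=0$, and for $1\le i\le c-1$ a Vandermonde convolution collapses it to a product of three binomial coefficients, which a short manipulation identifies with $\binom{i+q-1}{q}\binom{c+q}{i+q}-\binom{c+q-i-1}{q-1}\binom{c+q-1}{c+q-i}$, i.e.\ with \Cref{DelPezzo}. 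I expect the main obstacle to be exactly this last computation---carrying out the Vandermonde reduction and the binomial bookkeeping accurately---while the clique-cover verification licensing the edge contractions (that no $C_m$ with $m\ge 2q+3$, and no disjoint union of paths, lies in $\mathcal F_{q,1}$) is a more routine secondary point.
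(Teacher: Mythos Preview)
Your treatment of parts \eqref{FBettiTable1} and \eqref{FBettiTable2} matches the paper's: both reduce to \Cref{Gen}, \Cref{EBci}, and \Cref{Isol} to identify the ideal as principal of degree $r+1$, respectively as generated by two coprime-up-to-$x_N$ monomials of degree $q+1$, and the resolutions follow.

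The genuine divergence is in part \eqref{FBettiTable3}. The paper does \emph{not} compute the row $j=q$ directly. Instead it observes that once the vanishing pattern is established (only row $j=q$ for proper $W$, via \Cref{cor:PKBetti} applied to induced subgraphs of $P_{2q+c-1}$; only $j=2q$ for $W=V$, via a single edge contraction and induction on $c$ with base case given by \eqref{FBettiTable1} or \eqref{FBettiTable2}), the resolution is pure and the module is Cohen--Macaulay (projective dimension visibly equals $c$). Then the Herzog--K\"uhl formula \Cref{thm:pure} forces the entries $\beta_{p,q}$ to be exactly those of \Cref{DelPezzo}. No combinatorics of cyclic runs, no Vandermonde convolution.

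Your route---contracting each path component of $C_n[W]$ to a point to get $\widetilde H_{q-1}\cong\Bbbk^{\binom{d(W)-1}{q}}$, then summing over $W$ grouped by the number of runs of the complement---is correct in principle and more self-contained, but it trades the appeal to \Cref{thm:pure} for a binomial identity that you yourself flag as the main obstacle. The paper's approach buys you that identity for free: purity plus Cohen--Macaulayness determines the numbers. Conversely, your approach would yield the Betti numbers without ever invoking \Cref{thm:pure}, which is a mild gain in elementarity if the Vandermonde step is carried out. The clique-cover checks you mention (that $C_m\notin\mathcal F_{q,1}$ for $m\ge 2q+2$ and that disjoint unions of paths are $\mathcal F_{q,1}$-free) are indeed routine; the paper sidesteps them by using \Cref{cor:PKBetti} for proper subsets rather than contracting paths, and by inducting one edge at a time for the full cycle.
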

\begin{table}[h!]
\begin{minipage}[b]{.45\textwidth}
$$
\begin{array}{c|ccc}
     & 0 & 1 \\ \hline
    0 & 1 & . \\
    r & . & 1
\end{array}
$$
    \caption{$G\in\mathcal F_{q,1}$}
    \label{F1BettiTable}
\end{minipage}
\begin{minipage}[b]{.45\textwidth}
$$
\begin{array}{c|ccc}
     & 0 & 1 & 2 \\ \hline
    0 & 1 & . & . \\
    q & . & 2 & . \\
    2q-s & . & . & 1
\end{array}
$$
    \caption{$G\in\mathcal F_{q,2}$}
    \label{F2BettiTable}
\end{minipage}
\end{table}

\begin{proof}
    \eqref{FBettiTable1} It is immediate from \Cref{Gen}.
    
    \eqref{FBettiTable2} Since $G \in \mathcal{F}_{q,2}$, we have $G = H \sqcup \overline{K_s}$, where $H$ is an elementary bipartite graph on $2q - 2s + 2$ for $0 \leq s \leq q$. 
    Let $x_0, x_1, \dots, x_{q-s}$ and $y_0, y_1, \dots, y_{q-s}$ be the variables corresponding to the vertices in $U$ and $W$, respectively, where $U$ and $W$ are the bipartition sets of the vertex set of $H$. 
    Let $z_1, z_2, \dots, z_s$ be the variables corresponding to the vertices of $K_s$. 
    Then, the Stanley-Reisner ideal of $\sigma_q(\Delta(G))$ is generated by the two monomials 
    $$ (x_0 x_1 \cdots x_{q-s})(z_1 z_2\cdots z_s) \quad\text{and}\quad (y_0 y_1 \cdots y_{q-s})(z_1 z_2 \cdots z_s)$$
    by \Cref{EBci} and \Cref{Isol}.
    Therefore, we obtain the Betti table as shown in \Cref{F2BettiTable}.

    \eqref{FBettiTable3} By \Cref{thm:pure} it would suffice to show the required vanishing of Betti numbers only. 
    In order to proceed by \Cref{thm:Homology}, take any nonempty subset $W\subseteq V(C_{2q+c})$. 
    We divide into two cases.

    {\it Case}: $W\neq V(C_{2q+c})$.
    The induced subgraph $C_{2q+c}[W]$ becomes isomorphic to that of $P_{2q+c-1}$, and \Cref{cor:PKBetti} says that $\widetilde{H}_{j-1}(\sigma_q\Delta(C_{2q+c})[W],\Bbbk)\neq 0$ only if $|W|\leq c+q-1$ and $j=q$.

    {\it Case}: $W=V(C_{2q+c})$.
    We pick an edge $e$ of $C_{2q+c}$.
    Then by \Cref{cor:EdgePreserve} we have $\widetilde{H}_{j-1}(\sigma_q\Delta(C_{2q+c}),\Bbbk)\cong\widetilde{H}_{j-1}(\sigma_q\Delta(C_{2q+c})/e,\Bbbk)\cong\widetilde{H}_{j-1}(\sigma_q\Delta(C_{2q+c-1}),\Bbbk).$
    Applying induction on the number of vertices we conclude that $\widetilde{H}_{j-1}(\sigma_q\Delta(C_{2q+c}),\Bbbk)\neq 0$ if and only if $j=2q$.
    Note that \eqref{FBettiTable1} or \eqref{FBettiTable2} provides the initial step of induction.
\end{proof}

For the rest of this section we introduce interactions between graphs in the families $\mathcal F_{q,1},\mathcal F_{q,2},\mathcal F_{q+1,1},\mathcal F_{q+1,2}$, and cycles in terms of forbidden induced subgraphs, which is used in \Cref{Sec:Main}. We refer to the following theorems.

\begin{Thm}[Strong perfect graph theorem, \cite{MR2233847}]\label{Perfect}
A graph $G$ is perfect if and only if $G$ is $C_n$- and $\overline{C_n}$-free for all odd integers $n\geq 5$.
\end{Thm}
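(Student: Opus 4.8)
This is the \emph{Strong Perfect Graph Theorem} of Chudnovsky, Robertson, Seymour and Thomas, and the plan is to treat its two implications separately, since they differ enormously in depth. Recall that a graph is \emph{Berge} if it is $C_n$- and $\overline{C_n}$-free for all odd $n\geq 5$ (no odd hole and no odd antihole), and \emph{perfect} if $\chi(H)=\omega(H)$ for every induced subgraph $H$.

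For the implication ``perfect $\Rightarrow$ Berge'' I would use that perfection is hereditary: it suffices to check that no odd hole and no odd antihole is itself perfect. For $C_{2k+1}$ with $k\geq 2$ one has $\omega=2$ but $\chi=3$. For $\overline{C_{2k+1}}$ with $k\geq 2$ one has $\omega(\overline{C_{2k+1}})=\alpha(C_{2k+1})=k$, while $\chi(\overline{C_{2k+1}})$ is the minimum number of cliques needed to cover $V(C_{2k+1})$; since $C_{2k+1}$ is triangle-free, each such clique is a vertex or an edge, so at least $\lceil(2k+1)/2\rceil=k+1$ of them are needed. Thus $\chi>\omega$ in both cases, so neither graph is perfect, whence a perfect graph can contain neither as an induced subgraph.

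The reverse implication ``Berge $\Rightarrow$ perfect'' is the substance of \cite{MR2233847}, and I would follow that argument. Suppose for contradiction that $G$ is a minimal counterexample: $G$ is Berge and imperfect, but every proper induced subgraph of $G$ is perfect, so $G$ is \emph{minimally imperfect}. By Lov\'asz's theory, such a $G$ is \emph{partitionable} and satisfies strong numerical constraints (for instance $|V(G)|=\omega(G)\alpha(G)+1$, each vertex lies in exactly $\omega(G)$ maximum cliques, and $G$ has no star cutset). The heart of the proof is the \emph{decomposition theorem for Berge graphs}: every Berge graph is either \emph{basic} --- bipartite, the line graph of a bipartite graph, the complement of one of these, or a double split graph --- or it admits one of a short list of structural decompositions: a proper $2$-join, a $2$-join in the complement, a balanced skew partition, or a homogeneous pair. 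One then checks (i) every basic Berge graph is perfect, and (ii) none of these decompositions can occur in a minimally imperfect graph (in particular a minimally imperfect graph has no balanced skew partition, no $2$-join, and no homogeneous pair). Combining (i), (ii), and the decomposition theorem, no minimally imperfect Berge graph can exist, so every Berge graph is perfect.

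The main obstacle, overwhelmingly, is the decomposition theorem for Berge graphs itself: its proof is a long and intricate case analysis (on prisms, wheels, ``appearances'' of specific configurations, and so on) spanning most of \cite{MR2233847}. Accordingly, in the present paper I would not attempt to reprove it; the theorem is invoked only as an external tool, to convert the hypothesis ``$C_n$- and $\overline{C_n}$-free for all odd $n\geq 5$'' into ``perfect,'' so I would simply cite \cite{MR2233847}.
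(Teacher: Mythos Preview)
Your proposal is correct and matches the paper's treatment: the paper does not prove this theorem at all but simply states it with a citation to \cite{MR2233847}, using it as an external input. Your additional sketch of the easy direction and outline of the CRST strategy are fine but go beyond what the paper itself provides.
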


An equivalent formulation of the theorem above has been suggested by Sturmfels and Sullivant.

\begin{Thm}[{\cite[Proposition 3.4 and Corollary 3.7]{MR2252121}}] \label{Perfect'}
A graph $G$ is not perfect if and only if there is an integer $q\geq 2$ such that $I(\sigma_q\Delta(G))$ is not generated in degree $q+1$. Furthermore, in this case if $q$ is set to be smallest, then either
\begin{enumerate}
    \item $q=2$, and $\beta_{1,j}(\sigma_q\Delta(G))\neq 0$ implies $j\in\{2,4,\ldots\}$, or
    \item $q\geq 3$, and $\beta_{1,j}(\sigma_q\Delta(G))\neq 0$ implies either $j=q$ or $j=2q$.
\end{enumerate}
\end{Thm}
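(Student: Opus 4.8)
The plan is to reduce everything to forbidden induced subgraphs via \Cref{Gen} and then feed the result through the strong perfect graph theorem \Cref{Perfect} (which in particular gives that $G$ is perfect if and only if $\overline G$ is). First I would unwind $\mathcal F_{q,1}$: since any edge of a graph on $q+1$ vertices together with the remaining $q-1$ vertices as singletons covers the vertex set with $q$ cliques, the only graph on $q+1$ vertices whose vertex set is not a union of $q$ cliques is $\overline{K_{q+1}}$, so every $H\in\mathcal F_{q,1}$ has $|V(H)|\ge q+2$; and phrasing ``not a union of $q$ cliques, but every proper subset is'' via the complement, $H\in\mathcal F_{q,1}$ exactly when $\overline H$ is $(q+1)$-vertex-critical and $\overline H\ne K_{q+1}$. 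In that case $\overline H$ cannot contain $K_{q+1}$ (vertex-criticality would force $\overline H=K_{q+1}$), so $\chi(\overline H)=q+1>\omega(\overline H)$ and $\overline H$, hence $H$, hence any graph having $H$ as an induced subgraph, is imperfect. Combining this with \Cref{thm:Prolong} (all minimal generators of $I(\sigma_q\Delta(G))$ sit in degree $\ge q+1$) and \Cref{Gen}, the condition ``$I(\sigma_q\Delta(G))$ is not generated in degree $q+1$'' becomes ``$G$ has an induced subgraph in $\mathcal F_{q,1}$'', and such a subgraph already certifies that $G$ is imperfect; this is the ``if'' direction of the equivalence.

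For the ``only if'' direction, assume $G$ is not perfect, so $\overline G$ is not perfect and therefore contains an induced subgraph $M$ that is minimally imperfect. By minimality $M$ is vertex-critical, $\chi(M)\ge 3$, and $\omega(M)<\chi(M)$ (there is no imperfect graph with $\chi\le 2$). Put $q:=\chi(M)-1\ge 2$; then $M$ is $(q+1)$-vertex-critical with $M\ne K_{q+1}$, so $\overline M\in\mathcal F_{q,1}$, and $\overline M$ is an induced subgraph of $\overline{\overline G}=G$. By \Cref{Gen}, $x_{V(M)}$ is then a minimal generator of $I(\sigma_q\Delta(G))$ of degree $|V(M)|\ge q+2$, so $I(\sigma_q\Delta(G))$ is not generated in degree $q+1$. (Via \Cref{Perfect}, $M$ is concretely an odd hole $C_{2k+1}$, giving $q=2$ and generator degree $2k+1\ge 5$, or an odd antihole $\overline{C_{2k+1}}$, giving $q=k$ and generator degree $2k+1>k+1$.)

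For the ``furthermore'' part, fix the smallest $q$ with $I(\sigma_q\Delta(G))$ not generated in degree $q+1$. By \Cref{Gen}, every minimal generator has degree $q+1$ (from an induced $\overline{K_{q+1}}$) or degree $|V(H)|$ for an induced $H\in\mathcal F_{q,1}$ of $G$. If $q=2$, then by \Cref{Gen_Secant} such an $H$ is an odd antihole $\overline{C_n}$, $n\ge 5$ odd, so all generator degrees lie in $\{3,5,7,\dots\}$ and hence $\beta_{1,j}(\sigma_2\Delta(G))\ne 0$ forces $j$ even. If $q\ge 3$, I claim every induced $H\in\mathcal F_{q,1}$ of $G$ equals $C_{2q+1}$, which forces all generator degrees into $\{q+1,2q+1\}$ and finishes the proof. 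Indeed, $\overline H$ is $(q+1)$-vertex-critical with $\omega(\overline H)\le q$, hence imperfect, so by \Cref{Perfect} it contains an induced $C_n$ or $\overline{C_n}$ with $n\ge 5$ odd. If $\overline H$ contained an induced $C_n$, then $H$ and thus $G$ would contain the induced antihole $\overline{C_n}\in\mathcal F_{2,1}$, so $I(\sigma_2\Delta(G))$ would fail to be generated in degree $3$ and $q=2$ would be admissible, contradicting $q\ge 3$. So $\overline H$ contains an induced $\overline{C_n}$, i.e.\ $H$ (hence $G$) contains an induced $C_n$; since $C_n\in\mathcal F_{\frac{n-1}{2},1}$, minimality of $q$ gives $\frac{n-1}{2}\ge q$, i.e.\ $n\ge 2q+1$, while $\chi(\overline{C_n})=\frac{n+1}{2}\le\chi(\overline H)=q+1$ gives $n\le 2q+1$; so $n=2q+1$. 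Then $\overline{C_{2q+1}}$ is an induced subgraph of $\overline H$ with chromatic number $q+1=\chi(\overline H)$, so by vertex-criticality of $\overline H$ it cannot be proper, whence $\overline H=\overline{C_{2q+1}}$ and $H=C_{2q+1}$.

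The main obstacle is this last claim, that when $q$ is minimal and at least $3$ the only induced $\mathcal F_{q,1}$-subgraph that can occur in $G$ is the odd hole $C_{2q+1}$: the argument must apply the strong perfect graph theorem to the small graph $\overline H$ and then play the vertex-criticality of $\overline H$ against the minimality of $q$ to trap $n$ at exactly $2q+1$. The remaining work --- translating ``generated in degree $q+1$'' into generator degrees via \Cref{thm:Prolong} and \Cref{Gen}, and the complementation trick for perfection --- is routine bookkeeping.
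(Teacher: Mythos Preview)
The paper does not prove \Cref{Perfect'} itself; it is quoted from Sturmfels--Sullivant and used as a black box. So there is no ``paper's proof'' to compare against, and the relevant question is only whether your argument stands on its own. It does: the translation via \Cref{Gen} of ``not generated in degree $q+1$'' into ``contains an induced member of $\mathcal F_{q,1}$'' is exactly right, and the observation that $H\in\mathcal F_{q,1}$ is equivalent to $\overline H$ being $(q{+}1)$-vertex-critical with $\overline H\ne K_{q+1}$ gives the clean link to imperfection. The ``only if'' direction through a minimally imperfect induced subgraph of $\overline G$ is standard and correct (minimally imperfect graphs are indeed vertex-critical, since deleting a vertex yields a perfect graph with $\chi=\omega\le\omega(M)<\chi(M)$).

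The genuinely nontrivial step is your final claim in the $q\ge 3$ case, that the only induced $\mathcal F_{q,1}$-subgraph of $G$ is $C_{2q+1}$. Your two-sided pinching of $n$ is sound: an induced odd hole in $\overline H$ would hand $G$ an induced $\overline{C_n}\in\mathcal F_{2,1}$, contradicting minimality of $q\ge 3$; an induced odd antihole $\overline{C_n}$ in $\overline H$ forces $n\ge 2q+1$ via $C_n\in\mathcal F_{(n-1)/2,1}$ and minimality, and $n\le 2q+1$ via $\chi(\overline{C_n})=(n+1)/2\le\chi(\overline H)=q+1$. Vertex-criticality then forces $\overline H=\overline{C_{2q+1}}$. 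This is a clean self-contained derivation of the Sturmfels--Sullivant statement from \Cref{Perfect}, \Cref{Gen}, and \Cref{Gen_Secant}.
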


Building upon them we find a criterion for the $\mathcal F_{q+1,1}$-freeness.

\begin{Prop}\label{Forbidden1}
Let $G$ be a graph and $q \geq 1$ be an integer.
Suppose that $G$ is $\mathcal F_{j,1}$-free for all $2\leq j<q$ and that either
\begin{enumerate}
    \item\label{Forbidden1-1} $q=1$, and $G$ is $\mathcal F_{1,2}$-free and $C_5$-free, or
    \item\label{Forbidden1-2} $q\geq 2$, and $G$ is $\mathcal F_{q,1}$-free and $C_{2q+3}$-free. 
\end{enumerate}
Then $G$ is $\mathcal F_{q+1,1}$-free.
\end{Prop}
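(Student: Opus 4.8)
The plan is to argue by contradiction, treating the cases $q=1$ and $q\ge 2$ separately since they use different parts of the hypothesis.

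For $q=1$ the claim reduces to a direct check. By \Cref{Gen_Secant} the family $\mathcal F_{2,1}$ consists of the antiholes $\overline{C_n}$ with $n\ge 5$ odd, so if $G$ were not $\mathcal F_{2,1}$-free it would contain an induced $\overline{C_n}$ for some odd $n\ge 5$. For $n=5$ this is an induced $C_5$, as $\overline{C_5}\cong C_5$, contradicting $C_5$-freeness. For $n\ge 7$ one checks that $\overline{C_n}$ already contains an induced $4$-cycle --- for instance, the vertices at cyclic positions $0,1,3,4$ along $C_n$ span an induced $C_4$ in $\overline{C_n}$ --- so $G$ would contain an induced $C_4$, i.e.\ would fail to be $\mathcal F_{1,2}$-free (recall $\mathcal F_{1,2}=\{C_4\}$). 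Either way we reach a contradiction.

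Now let $q\ge 2$ and suppose, for contradiction, that $G$ is not $\mathcal F_{q+1,1}$-free. By \Cref{Gen}, $I(\sigma_j\Delta(G))$ is minimally generated by the squarefree monomials $x_W$ with $G[W]=\overline{K_{j+1}}$ or $G[W]\in\mathcal F_{j,1}$, and every member of $\mathcal F_{j,1}$ has at least $j+2$ vertices; so our hypotheses say precisely that $I(\sigma_j\Delta(G))$ is generated in degree $j+1$ for all $2\le j\le q$ while $I(\sigma_{q+1}\Delta(G))$ is not. Thus $q+1$ is the smallest index $\ge 2$ where generation in the expected degree fails, and since $q+1\ge 3$, \Cref{Perfect'} forces the minimal generators of $I(\sigma_{q+1}\Delta(G))$ to lie in degrees $q+2$ and $2q+3$ only. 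As generation in degree $q+2$ fails, there is a minimal generator of degree $2q+3$, which by \Cref{Gen} is $x_W$ for an induced subgraph $H:=G[W]\in\mathcal F_{q+1,1}$ on exactly $2q+3$ vertices. Cutting the problem down to this extremal case --- so that the cycle-forbidding argument below need only control cycles of length at most $2q+3$ --- is the step I expect to be the main obstacle, and it is precisely here that the Sturmfels--Sullivant input \Cref{Perfect'} is indispensable.

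It then remains to rule out such an $H$. Being an induced subgraph of $G$, the graph $H$ is $\mathcal F_{j,1}$-free for all $2\le j\le q$ and $C_{2q+3}$-free. A short check gives that for every odd $\ell$ with $5\le\ell\le 2q+1$ one has $C_\ell\in\mathcal F_{(\ell-1)/2,1}$ --- the $\ell$ vertices of $C_\ell$ cannot be covered by $(\ell-1)/2$ cliques, whereas every proper induced subgraph of $C_\ell$ is a disjoint union of paths and so can be --- and $2\le(\ell-1)/2\le q$; combining this with $C_{2q+3}$-freeness and $|V(H)|=2q+3$ shows that $H$ has no induced odd cycle of length $\ge 5$, i.e.\ no odd hole. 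On the other hand, $\mathcal F_{2,1}$-freeness of $H$ together with \Cref{Gen_Secant} gives that $H$ has no induced $\overline{C_n}$ with $n\ge 5$ odd, i.e.\ no odd antihole. Passing to the complement, $\overline{H}$ then has neither an odd hole nor an odd antihole, so $\overline{H}$ is perfect by the strong perfect graph theorem (\Cref{Perfect}); in particular $\chi(\overline{H})=\omega(\overline{H})$. But $H\in\mathcal F_{q+1,1}$ means exactly that $\overline{H}$ is vertex-critical of chromatic number $q+2$, i.e.\ $\chi(\overline{H})=q+2$ while $\chi(\overline{H}-v)\le q+1$ for every vertex $v$. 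Now $\omega(\overline{H})=q+2$ produces an induced $K_{q+2}$ inside $\overline{H}$, and since $|V(\overline{H})|=2q+3>q+2$ we may delete a vertex $v$ outside this clique, getting $\chi(\overline{H}-v)\ge q+2$ --- a contradiction. Therefore $G$ is $\mathcal F_{q+1,1}$-free.
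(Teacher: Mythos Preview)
Your proof is correct and follows essentially the same route as the paper's: for $q=1$ both use \Cref{Gen_Secant} to reduce to $\overline{C_n}$ and find an induced $C_5$ or $C_4$; for $q\ge2$ both invoke \Cref{Perfect'} to cut down to an induced subgraph $H\in\mathcal F_{q+1,1}$ on exactly $2q+3$ vertices, observe $C_{2j+1}\in\mathcal F_{j,1}$ to kill the short odd holes, and then appeal to the strong perfect graph theorem. The only difference is in the final contradiction: the paper uses the \emph{imperfection} of $H$ to force the remaining odd hole $C_{2q+3}$ directly, whereas you use $C_{2q+3}$-freeness to make $H$ (hence $\overline H$) \emph{perfect} and then contradict vertex-criticality via $\chi(\overline H)=\omega(\overline H)$---two complementary readings of the same theorem.
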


\begin{proof}
To the contrary assume that $G$ is not $\mathcal F_{q+1,1}$-free. We divide into cases according to the value of $q$.

\eqref{Forbidden1-1} Thanks to \Cref{Gen_Secant} we may assume that $G=\overline{C_n}$ for some odd integer $n\geq 5$. But $\overline{C_5}=C_5$, and $\overline{C_m}$ has an induced cycle $C_4$ for all $m\geq 6$, a contradiction.

\eqref{Forbidden1-2} By \Cref{Perfect'} the graph $G$ is imperfect with $\beta_{1,2q}(\sigma_q\Delta(G))\neq 0$.
Taking an appropriate induced subgraph if necessary we may set $G$ to have exactly $2q+3$ vertices.
Note that $G$ is $C_{2j+1}$-free for all $2\leq j\leq q$ since $C_{2j+1}\in\mathcal F_{j,1}$ for all $j\geq 2$ and that it is also $\overline{C_n}$-free for every odd integer $n\geq 5$ due to \Cref{Gen_Secant}.
Now \Cref{Perfect} implies that $G \cong C_{2q+3}$, a contradiction.
\end{proof}

Let us see the families $\mathcal{F}_{q+1,2}$.

\begin{Prop}\label{Forbidden2}
Let $G$ be a graph and $q\geq 1$ be an integer. Then if $G$ is $\mathcal F_{q,2}$- and $C_{2q+4}$-free, then it is $\mathcal F_{q+1,2}$-free.
\end{Prop}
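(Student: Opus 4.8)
The plan is to argue by contraposition and to produce, from an induced member of $\mathcal F_{q+1,2}$ in $G$, either an induced member of $\mathcal F_{q,2}$ or an induced $C_{2q+4}$. Suppose $G$ has an induced subgraph isomorphic to $H\sqcup\overline{K_{q+1-j}}\in\mathcal F_{q+1,2}$, where $H$ is elementary bipartite on $2j+2$ vertices and $1\le j\le q+1$. If $j\le q$, delete one of the (at least one) vertices of $\overline{K_{q+1-j}}$: the result $H\sqcup\overline{K_{q-j}}$ is again an induced subgraph of $G$ and lies in $\mathcal F_{q,2}$ (same $j$, with the convention $\overline{K_0}=\emptyset$), contradicting $\mathcal F_{q,2}$-freeness. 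So it remains to rule out $j=q+1$, i.e.\ an induced elementary bipartite subgraph $H$ of $G$ on $2q+4$ vertices; and $H\not\cong C_{2q+4}$ because $G$ is $C_{2q+4}$-free. Thus everything reduces to the purely structural claim: \emph{every elementary bipartite graph $H$ on $2q+4$ vertices with $H\not\cong C_{2q+4}$ contains an induced subgraph isomorphic to some member of $\mathcal F_{q,2}$.}

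I would prove this claim by induction on $|E(H)|$, using the ear decomposition of \Cref{EBstructure}: write $K_2=H_0\subset H_1\subset\cdots\subset H_k=H$ with each $H_i$ obtained from $H_{i-1}$ by attaching an ear $P_{2m_i}$; since $H$ is not an even cycle, $k\ge 2$, and $H_1$ is an even cycle on $\ge 4$ vertices. If the last ear $P_{2m_k}$ has internal vertices (equivalently $m_k\ge 2$; recall an ear has an even number $2m_k-2$ of internal vertices), write them as $p_1,\dots,p_{2m_k-2}$ and set $H'=H_{k-1}=H-\{p_1,\dots,p_{2m_k-2}\}$, which is elementary bipartite on $2q+6-2m_k$ vertices with $2\le m_k\le q+1$ (the lower bound is $m_k\ge 2$; the upper bound holds because $H'\supseteq H_1$ has $\ge 4$ vertices). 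As $P_{2m_k}$ is the last ear, each $p_i$ has degree exactly $2$ in $H$, so $p_2,p_4,\dots,p_{2m_k-4}$ are pairwise non-adjacent and no edge of $H$ joins any of them to $H'$; hence the induced subgraph of $H$ on $V(H')\cup\{p_2,p_4,\dots,p_{2m_k-4}\}$ is $H'\sqcup\overline{K_{m_k-2}}$, which lies in $\mathcal F_{q,2}$, and we are done. If instead the last ear is a single edge $e$, then $H-e$ is elementary bipartite on $2q+4$ vertices with one fewer edge. Should $H-e\cong C_{2q+4}$, then $H$ is the $(2q+4)$-cycle with a single chord, which splits it into two even cycles; the vertex set of the shorter one induces a $C_{2\ell}$ with $2\le\ell\le q+1$, and if $\ell<q+1$ then every second vertex of the open arc on the other side — none of which is joined by an edge to that cycle — furnishes exactly $q+1-\ell$ isolated vertices, exhibiting an induced $C_{2\ell}\sqcup\overline{K_{q+1-\ell}}\in\mathcal F_{q,2}$ (while $\ell=q+1$ gives the induced $C_{2q+2}\in\mathcal F_{q,2}$ outright). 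Otherwise $H-e\not\cong C_{2q+4}$, so by induction $H-e$ has an induced subgraph $\Lambda=\Lambda_0\sqcup\overline{K_t}\in\mathcal F_{q,2}$ on a vertex set $W$ with $|W|\le 2q+2$ and $\Lambda_0$ its elementary bipartite summand; if some endpoint of $e$ lies outside $W$ then $H[W]=(H-e)[W]=\Lambda$, and if both endpoints of $e$ lie in $V(\Lambda_0)$ then $\Lambda_0+e$ is still elementary bipartite (a single-edge ear between different-colored vertices, by \Cref{EBstructure}), so $H[W]\in\mathcal F_{q,2}$; either way we are done.

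The step I expect to be the real obstacle is the one remaining branch of the single-edge-ear case: $e$ has one endpoint in $V(\Lambda_0)$ and one among the isolated vertices of $\Lambda$, so that in $H[W]$ that vertex acquires an edge and $H[W]\notin\mathcal F_{q,2}$, while merely discarding it leaves too few isolated vertices for membership in $\mathcal F_{q,2}$. Handling this will require either choosing the terminal single-edge ear $e$ (or the inductively produced $\Lambda$) more carefully so that this collision does not occur, exploiting the slack $2q+4-|W|\ge 2$ to substitute a fresh isolated vertex lying outside $W$, or strengthening the inductive hypothesis so that it records which vertices may play the role of the isolated part; another viable route is to sharpen the structural dichotomy so that the single-edge-ear case only ever leads to the cycle-with-chords configuration, which can then be finished by a direct, finite analysis.
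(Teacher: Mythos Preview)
Your reduction and overall strategy match the paper's exactly: both pass to an induced elementary bipartite subgraph $H$ on $2q+4$ vertices and invoke the ear decomposition of \Cref{EBstructure}. The paper's argument is terser than yours: it simply writes $H=G'\cup P_{2q-2k+4}$ for the final ear, and for $1\le k\le q$ selects $q-k$ alternate internal vertices of that ear as the $\overline{K_{q-k}}$ part, while $k=0$ forces $H=C_{2q+4}$. Your Case~1 (last ear with $m_k\ge 2$ internal vertices) is precisely this step, executed correctly.

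The obstacle you isolate is genuine, and in fact the paper's written proof does not explicitly dispose of it either: the stated range $0\le k\le q+1$ admits $k=q+1$ (a terminal single-edge ear), for which ``$G'\sqcup\overline{K_{q-k}}\in\mathcal F_{q,2}$'' is vacuous, and this case really occurs---for instance $H=K_{3,3}$ with $q=1$ has \emph{every} ear decomposition ending in single-edge ears, so one cannot write $K_{3,3}=G'\cup P_4$ at all. Your induction on $|E(H)|$ is the natural way to confront this, and you correctly handle both the base case ($H-e\cong C_{2q+4}$, cycle-with-one-chord) and two of the three sub-cases of the inductive step. The remaining branch---where the deleted single-edge ear $e$ joins a vertex of $\Lambda_0$ to an isolated vertex of $\Lambda$---is exactly the crux, and your honest assessment that it needs either a more careful choice of $\Lambda$, a substitution argument using the two spare vertices outside $W$, or a strengthened inductive hypothesis is accurate. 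One clean route: rather than inducting on edges of $H$, observe that $H[V(G')]$ is \emph{itself} elementary bipartite on $2k'+2$ vertices (by the remark after \Cref{EB}, since it sits between $G'$ and the complete bipartite graph on $V(G')$), so when you locate the first ear with internal vertices you may replace $G'$ by the full induced subgraph $H[V(G')]$; the remaining difficulty is then solely about choosing the isolated vertices to avoid chords of $H$, and this can be arranged by taking the ear decomposition so that $|V(G')|$ is as large as possible.
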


\begin{proof}
To the contrary suppose that $G$ is not $\mathcal F_{q+1,2}$-free. We may assume that $G=H\sqcup\overline{K_{q-j+1}}$ for some $1\leq j\leq q+1$ and some elementary bipartite graph $H$ on $2j+2$ vertices. If $j\leq q$, then $H\sqcup\overline{K_{q-j}}\in\mathcal F_{q,2}$, a contradiction. So $G$ is elementary bipartite with exactly $2q+4$ vertices. According to \Cref{EBstructure} one can write
$$G=G'\cup P_{2q-2k+4}$$
for an integer $0\leq k\leq q+1$, where $G'$ is an elementary bipartite graph on $2k+2$ vertices, and $P_{2q-2k+4}$ is the added path sharing two vertices with $G'$. Then if $k\geq 1$, then by taking $q-k$ suitable vertices from $P_{2q-2k+4}$ the graph $G$ has
$$G'\sqcup\overline{K_{q-k}}\in\mathcal F_{q,2}$$
as an induced subgraph, a contradiction, and otherwise we obtain $G=C_{2q+4}$, a contradiction again. Consequently $G$ is $\mathcal F_{q+1,2}$-free.
\end{proof}

\section{Property $N_{q+1,p}$ of $q$-secant complexes} \label{Sec:Main}
Eisenbud et al.\ have refined Fröberg's seminal work on the complete classification of $1$-regular complexes by providing a precise characterization of graphs that satisfy property $N_{2,p}$:

\begin{Thm}[{\cite[Theorem 2.1]{MR2188445}}]
    Let $G$ be a graph. Then $S(\Delta(G))$ satisfies property $N_{2,p}$ for an integer $p\geq2$ if and only if every cycle of $G$ with at most $p + 2$ vertices has a chord.
\end{Thm}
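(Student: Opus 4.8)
The plan is to translate the statement into vanishing of simplicial homology through Hochster's formula, dispose of the easy direction by exhibiting the offending cycle, and prove the hard direction by an induction on the vertex count driven by the Mayer-Vietoris sequence, with the chordal case of Fr\"oberg's theorem serving as the base. For the translation: since $\Delta(G)[W]=\Delta(G[W])$, \Cref{thm:Homology} shows that $S(\Delta(G))$ satisfies property $N_{2,p}$ exactly when $\widetilde H_{j-1}(\Delta(G[W]),\Bbbk)=0$ for all $W\subseteq V(G)$ and all $j\ge 2$ with $|W|\le p+j$; writing $k=j-1$ this reads $\widetilde H_{k}(\Delta(G[W]),\Bbbk)=0$ whenever $k\ge 1$, $W\subseteq V(G)$ and $|W|\le p+k+1$. (Here, as in the statement, a ``cycle'' is taken to have length at least $4$; triangles are irrelevant since $\Delta(C_3)$ is a simplex.) As $G[W]$ again has no induced cycle of length $\le p+2$ when $G$ does, it suffices to prove the implication for $W=V(G)$, the general reformulation following by applying it to each $G[W]$.

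For the easy direction I would argue by contraposition. If $G$ has an induced cycle $C$ of length $n$ with $4\le n\le p+2$, set $W=V(C)$; then $G[W]=C_n$ is triangle-free, $\Delta(G[W])=C_n$ is one-dimensional, and $\widetilde H_1(\Delta(G[W]),\Bbbk)\cong\Bbbk\ne 0$. Taking $j=2$ and $|W|=n\le p+2$, \Cref{thm:Homology} forces $\beta_{n-2,2}(\Delta(G))\ge 1$ with $n-2\le p$, so $N_{2,p}$ fails.

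For the hard direction, assume $G$ has no induced cycle of length $\le p+2$ and prove by induction on $m:=|V(G)|$ that $\widetilde H_k(\Delta(G),\Bbbk)=0$ for every $k\ge 1$ with $m\le p+k+1$. If $m\le p+2$, then every cycle of $G$ has at most $p+2$ vertices, hence a chord, so $G$ is chordal and the claim is the easy half of \Cref{thm:Froeberg} (cf.\ the proof of \Cref{thm:2Regular}). If $m\ge p+3$, only the range $k\ge m-p-1\ge 2$ is at stake: if some vertex is adjacent to all others then $\Delta(G)$ is a cone and its reduced homology vanishes, and otherwise I would pick any vertex $v$ (so $\deg_G v\le m-2$) and feed the decomposition $\Delta(G)=\st_{\Delta(G)}(v)\cup(\Delta(G)-v)$ --- where $\st_{\Delta(G)}(v)$ is contractible, $\Delta(G)-v=\Delta(G-v)$, and $\st_{\Delta(G)}(v)\cap(\Delta(G)-v)=\link_{\Delta(G)}(v)=\Delta(G[N_G(v)])$ --- into the Mayer-Vietoris sequence, obtaining the exact strand
$$\widetilde H_k(\Delta(G-v),\Bbbk)\longrightarrow\widetilde H_k(\Delta(G),\Bbbk)\longrightarrow\widetilde H_{k-1}(\Delta(G[N_G(v)]),\Bbbk).$$
For $k\ge m-p-1$ the left term vanishes by the inductive hypothesis applied to $G-v$ (which has $m-1<m$ vertices, and $k\ge(m-1)-p-1$), and the right term vanishes by the inductive hypothesis applied to $G[N_G(v)]$ (which has at most $m-2<m$ vertices, with $k-1\ge 1$ and $\deg_G v\le p+(k-1)+1$); hence $\widetilde H_k(\Delta(G),\Bbbk)=0$.

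The only real bookkeeping I anticipate is checking that both homological degrees appearing in the Mayer-Vietoris strand lie within the range already handled by the inductive hypothesis; this is painless precisely because for $m\ge p+3$ the relevant $k$ are automatically $\ge 2$, so one never needs to control $\widetilde H_0$ or $\widetilde H_1$ of the link, and the dominating-vertex case must be peeled off separately to ensure $G[N_G(v)]$ has strictly fewer vertices. The same scheme, with ``$q$-secant chordal'' replacing ``chordal'' at the base and the edge-contraction invariance of \Cref{cor:EdgePreserve} supplementing Mayer-Vietoris, is what one expects to carry the general \Cref{thm:main}.
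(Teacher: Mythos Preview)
Your argument is correct. The paper does not supply its own proof of this cited result, but the statement is the $q=1$ special case of \Cref{thm:main}, and the proof given there follows a genuinely different route from yours. Instead of Mayer--Vietoris driven by vertex deletion, the paper argues the ``if'' direction by contrapositive and downward induction on $p$: assuming $N_{q+1,p-1}$ holds but $N_{q+1,p}$ fails, one isolates a witnessing induced subcomplex, contracts an edge using \Cref{cor:EdgePreserve} to preserve the offending homology, and lands in the case $p-1$; the bottom cases $p=2$ and $p=3$ are handled respectively by a direct analysis of monomial relations and by the structural \Cref{bipartite_edge_contraction}. Your approach is more elementary and fully self-contained for $q=1$---it needs only Fr\"oberg's theorem and Mayer--Vietoris---whereas the paper's edge-contraction scheme is designed so that the same mechanism works uniformly for all $q\ge 1$, which is exactly what your closing paragraph anticipates. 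One minor remark: your inductive step implicitly uses that $G[N_G(v)]$ again has no induced cycle of length $\le p+2$; this is of course inherited from $G$, but it is worth stating explicitly since the induction is over all such graphs, not just subgraphs of a fixed one.
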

We extend this to higher secant complexes.

\begin{proof}[Proof of \Cref{thm:main}]
Given a simplicial complex $\Delta$ on a finite set $V = \{0,1,\ldots, r\}$ suppose that $S(\sigma_q\Delta)$ satisfies property $N_{q+1,1}$. 
Let $G_0$ be the underlying graph of $\Delta$, that is,
$$G_0=\{F\in\Delta:\dim F\leq 1\}.$$
One can see that $\Delta\subseteq\Delta(G_0)$, and so $\sigma_q\Delta\subseteq\sigma_q\Delta(G_0)$. 
By the assumption above \Cref{thm:Prolong} implies the equality $\sigma_q\Delta=\sigma_q\Delta(G_0)$.
Then the ``only if " parts of the statements \eqref{Main1}, \eqref{Main2}, and \eqref{Main3} can be verified using \Cref{FBettiTable}.
We now turn our attention to proving the ``if " parts.

\eqref{Main1} It is straightforward due to \Cref{Gen}.

\eqref{Main2} We prove this by contrapositive.
We assume that $S(\sigma_q\Delta(G))$ satisfies property $N_{q+1,1}$ but does not satisfy property $N_{q+1,2}$.
Note that every relation between monomials is spanned by those of the form
$$
\rel(m_1,m_2):\quad \left(\frac{m_2}{\gcd(m_1,m_2)}\right)m_1-\left(\frac{m_1}{\gcd(m_1,m_2)}\right)m_2=0,
$$
where $m_1$ and $m_2$ are any two monomials of degree $q+1$.
Therefore, for a suitable numbering of the vertices, $I(\sigma_q \Delta(G))$ contains two monomials given by
$$m_1 = (x_0 x_1 \cdots x_j) (x_{2j+2} x_{2j+3} \cdots x_{q+j+1}) \quad\text{and}\quad m_2 = (x_{j+1} x_{j+2} \cdots x_{2j+1})  (x_{2j+2} x_{2j+3} \cdots x_{q+j+1})$$
with $1 \leq j \leq q$.
Assume that the integer $j$ is the minimum among such integers, and replace $G$ with the induced subgraph $G[\{0, 1, \ldots, q+j+1\}]$.
By \Cref{thm:Prolong} the vertices $2j+2, 2j+3, \ldots, q+j+1$ are all isolated in $G$.
Therefore, for the induced subgraph $H=G[\{0, 1, \ldots, 2j+1\}]$, we have 
$$(x_0 x_1 \cdots x_j,x_{j+1} x_{j+2}\cdots x_{2j+1}) \subseteq I(\sigma_j\Delta(H))$$ by \Cref{Isol}.

We claim that there are no other monomial generators in $I(\sigma_j \Delta(H))$.
For the sake of contradiction suppose that there exists another monomial generator $m_3'$ of degree $j+1$ in $I(\sigma_j \Delta(H))$. 
We set $m_3 = m_3' \cdot (x_{2j+2} x_{2j+3} \cdots x_{q+j+1})$.
Then by \Cref{Isol} we have $m_3 \in I(\sigma_q \Delta(G))$. 
Since $\rel(m_1, m_3)$ and $\rel(m_3, m_2)$ span $\rel(m_1, m_2)$, one of them leads to a contradiction to the minimality of $j$. 

Thus, 
$$(x_0 x_1 \cdots x_j,x_{j+1} x_{j+2} \cdots x_{2j+1}) = I(\sigma_j \Delta(H)),$$
and we conclude that $G = H \sqcup \overline{K_{q-j}} \in \mathcal{F}_{q,2}$ since $H$ is elementary bipartite by \Cref{EBci}.

\eqref{Main3} We prove this by contrapositive.  
Assume that $S(\sigma_q\Delta(G))$ satisfies property $N_{q+1,p-1}$ but not property $N_{q+1,p}$ for $p \geq 3$.

First, suppose $p = 3$.  
Let $j$ be the smallest positive integer such that $\beta_{3,q+j}(\sigma_q\Delta(G)) \neq 0$.  
Let $G'=G[W]$ be an induced subgraph with $|W| = q+j+3$ such that
$$\widetilde{H}_{q+j-1}(\sigma_q \Delta(G'),\Bbbk) = \widetilde{H}_{q+j-1}(\sigma_q \Delta(G)[W],\Bbbk) \neq 0.$$
By \Cref{cor:EdgePreserve} for any edge $e$ of $G'$ the Stanley-Reisner ring $S(\sigma_q\Delta(G')/e)$ satisfies property $N_{q+1,1}$, but
\begin{equation}\label{Betti_special}
\beta_{2,q+k}(\sigma_q\Delta(G')/e) =
\begin{cases}
0 & \text{if } 1 \leq k \leq j - 1, \\
\dim_\Bbbk \widetilde{H}_{q+j-1}(\sigma_q\Delta(G')/e, \Bbbk) \neq 0 & \text{if } k = j.
\end{cases}
\end{equation}
Note that by \Cref{thm:Prolong} we have $\sigma_q\Delta(G')/e = \sigma_q\Delta(G'/e)$ since $I(\sigma_q\Delta(G')/e)$ is generated in degree $q+1$.  
Using \eqref{Betti_special} and \Cref{thm:main}\eqref{Main2}, we conclude that the edge contraction $G'/e$ is isomorphic to $H \sqcup \overline{K_{q-j}}$, where $H$ is an elementary bipartite graph with exactly $2j+2$ vertices.  
Since $e$ was chosen arbitrarily, \Cref{bipartite_edge_contraction} implies $G' = C_{2j+3} \sqcup \overline{K_{q-j}}$.  

However, if $j < q$, then by \Cref{Isol} and \Cref{FBettiTable}\eqref{FBettiTable1} the Stanley-Reisner ideal $I(\sigma_q\Delta(G'))$ would not be generated in degree $q+1$ which contradicts that $S(\sigma_q\Delta(G')/e)$ satisfies property $N_{q+1,1}$.  
Hence, we conclude that $G$ contains an induced subgraph isomorphic to $C_{2q+3}$.

For $p \geq 4$ it follows that an edge contraction $G/e$ has an induced cycle $C_{2q+p-1}$.  
However, $G$ is $C_{2q+p-1}$-free.
In conclusion, $G$ contains an induced cycle $C_{2q+p}$. 
\end{proof}

By further exploring the behavior of Betti tables under edge contractions, we may derive additional results on the vanishing patterns of graded Betti numbers.

\begin{proof}[Proof of \Cref{MainCor}]
For any integer $q\geq 1$, if $S(\sigma_q\Delta)$ satisfies property $N_{q+1,1}$, then there is a graph $G$ on $V(\Delta)$ such that $\sigma_q\Delta(G)=\sigma_q \Delta$.
Hence we can replace $\Delta$ with $\Delta(G)$ due to \Cref{Eventually}.

By \Cref{thm:main}\eqref{Main3} we know that $G$ is $\mathcal{F}_{j,1}$-free for all $2 \leq j < q$ and also $\mathcal{F}_{q,1}$-, $\mathcal{F}_{q,2}$-, and $C_{2q+i}$-free for all $3 \leq i \leq p$.  
Therefore, the graph $G$ is $\mathcal{F}_{q+1,1}$-free by \Cref{Forbidden1}.
By \Cref{Forbidden2} it is $\mathcal{F}_{q+1,2}$-free when $p \geq 4$ and it is already $C_{2(q+1)+i}$-free for all $3 \leq i \leq p-2$ provided that $p \geq 5$.  
This completes the proof by \Cref{thm:main}.
\end{proof}

In addition, the family of $q$-secant chordal graphs is closed under edge contraction. 
More generally, edge contractions shift, by one, the index $p$ of property $N_{q+1, p}$ satisfied.

\begin{proof}[Proof of \Cref{InnerProj}]
Suppose that $S(\sigma_q\Delta)$ satisfies property $N_{q+1,p}$ for an integer $p\geq 1$, and let $e$ be an edge of $\Delta$. For the case $p=1$ there is nothing to prove, hence we set $p\geq 2$. In order to find whether $S(\sigma_q\Delta/e)$ satisfies property $N_{q+1,p-1}$ take an arbitrary set $W$ of $i+j$ vertices of $\Delta/e$ with $i\leq p-1$ and $j\geq q+1$. Write $v_e$ for the new vertex of $\Delta/e$ obtained by contracting $e$. If $v_e\not\in W$, then $\sigma_q\Delta/e[W]=\sigma_q\Delta[W]$, and so their reduced homology groups are trivially the same. If $v_e\in W$, then the set $\widetilde{W}:=(W\setminus v_e)\cup e$ of vertices of $\Delta$ has $\widetilde{H}_{j-1}(\sigma_q\Delta[\widetilde{W}],\Bbbk)\cong\widetilde{H}_{j-1}(\sigma_q\Delta/e[W],\Bbbk)$ by \Cref{cor:EdgePreserve}. By our assumption we have $\widetilde{H}_{j-1}(\sigma_q\Delta/e[W],\Bbbk)=0$ and thus conclude that $S(\sigma_q\Delta/e)$ satisfies property $N_{q+1,p-1}$.

For the special case of $q$-secant chordal graphs, \Cref{thm:main} says that $I(\sigma_q\Delta(G))$ has $(q+1)$-linear resolution, and as above \Cref{cor:EdgePreserve} and \Cref{thm:Prolong} tell us that so does $I(\sigma_q\Delta(G/e))$. Applying \Cref{thm:main} again we see that $G/e$ is $q$-secant chordal.
\end{proof}

Through the study of graded Betti numbers and property $N_{q+1, p}$ of $q$-secant complexes, we may classify simplicial complexes whose $q$-secant complexes are del Pezzo (see \Cref{Def:delPezzo}).

\begin{proof}[Proof of \Cref{MainDelPezzo}]
    It immediately follows from \Cref{thm:main} and \Cref{FBettiTable}.
\end{proof}

\bigskip

\section{Numerical invariants and Cohen-Macaulayness}\label{Sec:CM}
In this section as a complementary observation we interpret the value of $\beta_{p,q}(\sigma_q\Delta(G))$ for a graph $G$ in the language of the connectivity of induced subgraphs when $S(\sigma_q\Delta(G))$ satisfies property $N_{q+1,p-1}$. 
This part is also devoted to computing the projective dimension of $S(\sigma_q\Delta(G))$ when $G$ is $q$-secant chordal. 

We specify collections of induced subgraphs that provide numerical building blocks for our purpose.

\begin{Def}
Let $G$ be a graph. For integers $n\geq s+1\geq 1$ we define $\mathcal H_n^s(G)$ to be the set of induced subgraphs $H$ on $n$ vertices of $G$ such that $H$ has exactly $s+1$ connected components.
\end{Def}

\Cref{thm:numerical} presents its significance. We prove it as follows:

\begin{proof}[Proof of \Cref{thm:numerical}]
We proceed by \Cref{thm:Homology}.
Let $H$ be an induced subgraph on $p+q$ vertices of $G$ such that $\widetilde{H}_{q-1}(\sigma_q\Delta(H),\Bbbk)\neq 0$. Take $s+1$ to be the number of connected components of $H$. Note that $p+q-s-1$ successive edge contractions of $H$ are possible, resulting in $\overline{K_{s+1}}$. Then presuming that $s\geq q$ we have
$$
\dim_\Bbbk\widetilde{H}_{q-1}(\sigma_q\Delta(H),\Bbbk)=\dim_\Bbbk\widetilde{H}_{q-1}(\sigma_q\Delta(\overline{K_{s+1}}),\Bbbk)=\binom{s}{q}
$$
by \Cref{cor:EdgePreserve} and \Cref{cor:PKBetti}.

We claim that $s\geq q$. To the contrary say that $s\leq q-1$. Then taking $p-1$ successive edge contractions of $H$ if necessary, we restrict ourselves to the case $p=1$, that is, $|V(H)|=q+1$. If $H$ had an edge, then $\sigma_q\Delta(H)$ would be a simplex so that $\widetilde{H}_{q-1}(\sigma_q\Delta(H),\Bbbk)=0$, a contradiction. Therefore, $H=\overline{K_{q+1}}$, and so $s=q$, a contradiction. Our declaration has been confirmed.
\end{proof}

\begin{Rmk}
Let $G$ be a graph and $n\geq s+1\geq 1$ be integers. Assume that $S(\sigma_q\Delta(G))$ satisfies property $N_{q+1,n-q-1}$ for all $s\leq q\leq n-1$. Then using a well-known inversion formula we get
$$
|\mathcal H_n^s(G)|=\sum_{q=s}^{n-1}(-1)^{q-s}\binom{q}{s}\beta_{n-q,q}(\sigma_q\Delta(G)).
$$
\end{Rmk}

As a consequence of \Cref{thm:numerical} for a $q$-secant chordal graph $G$ the sets $\mathcal H^s_n(G)$ can be shown to include information on the projective dimension of $S(\sigma_q\Delta(G))$.

\begin{Cor}\label{cor:pd}
Let $G$ be a $q$-secant chordal graph with $\sigma_q\Delta(G)$ not a simplex for an integer $q\geq1$. Then the projective dimension $\pd S(\sigma_q\Delta(G))$ is computed as
$$
\max\{|V(H)|-q:H\textup{ has at least }q+1\textup{ connected components}\},
$$
where $H$ stands for an induced subgraph of $G$.
\end{Cor}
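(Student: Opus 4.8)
The statement to prove is \Cref{cor:pd}, computing $\pd S(\sigma_q\Delta(G))$ for a $q$-secant chordal graph $G$. The natural route is to combine \Cref{thm:numerical} with the fact that $q$-secant chordal graphs have $(q+1)$-linear resolution by \Cref{thm:main}. Since $G$ is $q$-secant chordal, $I(\sigma_q\Delta(G))$ has $(q+1)$-linear resolution, so $\beta_{i,j}(\sigma_q\Delta(G))=0$ unless $j\in\{0,q\}$, and $\beta_{0,0}=1$; hence $\pd S(\sigma_q\Delta(G))$ equals the largest $p$ with $\beta_{p,q}(\sigma_q\Delta(G))\neq0$ (such $p$ exists and is positive because $\sigma_q\Delta(G)$ is not a simplex, so $I(\sigma_q\Delta(G))\neq0$). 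Because the resolution is $(q+1)$-linear, $S(\sigma_q\Delta(G))$ automatically satisfies property $N_{q+1,p-1}$ for every $p$, so \Cref{thm:numerical} applies unconditionally and gives
$$
\beta_{p,q}(\sigma_q\Delta(G)) = \sum_{s=q}^{p+q-1}\binom{s}{q}\,|\mathcal H^s_{p+q}(G)|.
$$

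**Key steps.** First I would record that $\pd S(\sigma_q\Delta(G)) = \max\{p \ge 1 : \beta_{p,q}(\sigma_q\Delta(G))\neq 0\}$, using $(q+1)$-linearity and non-simplex-ness as above. Second, I would observe that the right-hand side of the displayed formula is a sum of non-negative terms, and that $\binom{s}{q}>0$ for every $s$ in the range $q\le s\le p+q-1$; therefore $\beta_{p,q}(\sigma_q\Delta(G))\neq0$ if and only if $\mathcal H^s_{p+q}(G)\neq\emptyset$ for some $s$ with $q\le s\le p+q-1$ — equivalently, if and only if $G$ has an induced subgraph $H$ on exactly $p+q$ vertices with at least $q+1$ connected components (the upper bound $s\le p+q-1$ being automatic, since a graph on $p+q$ vertices has at most $p+q$ components, and $H$ having $p+q$ components would mean $H=\overline{K_{p+q}}$, still fine but then $s = p+q-1$ after discarding the empty face... actually $s+1 \le p+q$ always, so $s \le p+q-1$ is free). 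Third, I would translate: $\beta_{p,q}\neq 0$ iff there is an induced subgraph $H$ of $G$ with $|V(H)| = p+q$ and at least $q+1$ components, i.e. with $|V(H)| - q = p$ and at least $q+1$ components. Taking the maximum over all such $p$ yields exactly $\max\{|V(H)|-q : H \text{ induced subgraph of } G,\ H \text{ has} \ge q+1 \text{ connected components}\}$, as claimed.

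**The main obstacle.** The only genuine subtlety is a well-definedness check: the maximum on the right-hand side should be finite and should genuinely match $\pd$. Finiteness is immediate since $G$ is a finite graph. The more delicate point is monotonicity — that if $G$ has an induced subgraph with $n$ vertices and $\ge q+1$ components then it also has one with $n' < n$ vertices and $\ge q+1$ components whenever $n' \ge q+1$ (so that the set of achievable values of $p$ is an interval up to the maximum, and the maximum of $|V(H)|-q$ is the right thing rather than just some value where $\beta_{p,q}\neq0$). This actually isn't needed for the equality of the two maxima — both sides literally take a max over the same indexing — but I would include a sentence confirming that $\{p : \beta_{p,q}\neq0\}$ is nonempty (guaranteed by $\sigma_q\Delta(G)$ not being a simplex, so $I(\sigma_q\Delta(G))$ has a minimal generator, giving $\beta_{1,q}\neq0$, equivalently some induced $\overline{K_{q+1}}$ or member of $\mathcal F_{q,1}$ inside $G$ by \Cref{Gen}). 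Beyond that the argument is a direct unwinding of \Cref{thm:numerical}, so I expect no real difficulty.
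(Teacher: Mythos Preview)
Your approach is correct and is exactly the one the paper intends: the corollary is stated in the paper as a direct consequence of \Cref{thm:numerical} (together with the $(q+1)$-linearity from \Cref{thm:main}), with no separate proof given. Your unwinding---that linearity forces $\pd$ to equal the largest $p$ with $\beta_{p,q}\neq 0$, and that the nonnegativity of the summands in \Cref{thm:numerical} makes $\beta_{p,q}\neq 0$ equivalent to the existence of an induced subgraph on $p+q$ vertices with at least $q+1$ components---is precisely the argument; the monotonicity digression is, as you note, unnecessary.
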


Now let us discuss how projective dimension changes under edge contractions. In the spirit of \Cref{cor:EdgePreserve} one can observe that if $\Delta$ is a simplicial complex such that $S(\sigma_q\Delta)$ satisfies property $N_{q+1,1}$, then we have $\pd S(\sigma_q\Delta/e)\leq\pd S(\sigma_q\Delta)$ for every edge $e$ of $\Delta$. But for the sake of consistency we restrict ourselves to clique complexes of $q$-secant chordal graphs.

\begin{Cor}\label{cor:prdim}
Let $G$ be a $q$-secant chordal graph for an integer $q\geq1$ and $e$ be an edge of $G$. Then we obtain 
$$
\pd S(\sigma_q\Delta(G/e))\leq\pd S(\sigma_q\Delta(G))
$$ 
with equality if and only if an induced subgraph $H\in\mathcal H_{p+q}^s(G)$ shares no vertices with $e$ for the integer $p=\pd S(\sigma_q\Delta(G))$ and any integer $q\leq s\leq p+q-1$.
\end{Cor}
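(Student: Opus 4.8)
\textbf{Proof proposal for Corollary \ref{cor:prdim}.}
The plan is to compare the projective dimensions of $S(\sigma_q\Delta(G))$ and $S(\sigma_q\Delta(G/e))$ by translating both sides, via \Cref{cor:pd}, into the combinatorial quantity
$$
p(G)=\max\{|V(H)|-q : H\text{ an induced subgraph of }G\text{ with at least }q+1\text{ connected components}\},
$$
and similarly $p(G/e)$ for the contracted graph. Both graphs are $q$-secant chordal — for $G/e$ this is \Cref{InnerProj} — so \Cref{cor:pd} applies to each provided the relevant $q$-secant complex is not a simplex; the degenerate case where $\sigma_q\Delta(G/e)$ becomes a simplex must be handled separately, but there $\pd S(\sigma_q\Delta(G/e))=0$ and the inequality is trivial, while the equality clause reduces to checking that $p(G)$ is realized disjointly from $e$, which is a direct unwinding of definitions. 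So the substance is the case where both are nonsimplices.

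First I would establish the inequality $\pd S(\sigma_q\Delta(G/e))\le\pd S(\sigma_q\Delta(G))$. For this I use the observation already noted before the statement: since $S(\sigma_q\Delta(G))$ has a $(q+1)$-linear resolution it satisfies property $N_{q+1,1}$, so \Cref{cor:EdgePreserve} (combined with \Cref{thm:Homology}, applied to every induced subcomplex, as in the proof of \Cref{InnerProj}) gives $\beta_{i,j}(\sigma_q\Delta(G/e))\le\beta_{i,j}(\sigma_q\Delta(G))$ after the identification $\sigma_q\Delta(G)/e=\sigma_q\Delta(G/e)$ coming from \Cref{thm:Prolong} and the degree-$(q+1)$-generation. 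Concretely, a subset $W$ of vertices of $G/e$ either avoids the contracted vertex $v_e$ — in which case $\sigma_q\Delta(G/e)[W]=\sigma_q\Delta(G)[W]$ — or contains it, in which case $\widetilde H_\ast(\sigma_q\Delta(G/e)[W],\Bbbk)\cong\widetilde H_\ast(\sigma_q\Delta(G)[\widetilde W],\Bbbk)$ for $\widetilde W=(W\setminus v_e)\cup e$, by \Cref{cor:EdgePreserve}. Summing over $W$ via Hochster's formula yields the Betti number domination and hence the projective dimension inequality.

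Next I would analyze the equality case. Writing $p=\pd S(\sigma_q\Delta(G))=p(G)$, equality $p(G/e)=p(G)$ holds if and only if there is an induced subgraph $H'$ of $G/e$ on $p+q$ vertices with at least $q+1$ components. If $H'$ avoids $v_e$ then $H'$ is already an induced subgraph of $G$ disjoint from $e$ with $p+q$ vertices and $\ge q+1$ components, i.e.\ $H'\in\mathcal H_{p+q}^s(G)$ for some $q\le s\le p+q-1$, vertex-disjoint from $e$. Conversely, any such $H\in\mathcal H_{p+q}^s(G)$ disjoint from $e$ survives as an induced subgraph of $G/e$, unchanged, so realizes $p(G/e)=p+q-q=p$. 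The remaining point is the case where the optimal $H'$ in $G/e$ contains $v_e$: then the ``uncontraction'' $\widetilde{H'}=(V(H')\setminus v_e)\cup e$ is an induced subgraph of $G$ on $p+q+1$ vertices, and I must argue that its number of connected components is at least $q+1$ as well — adding back the edge $e$ can only merge two components of the induced graph on $V(H')\setminus v_e$ into one, but that induced subgraph already has at least as many components as $H'$ since deleting the vertex $v_e$ and adjoining two new adjacent vertices in place of it cannot decrease the component count below that of $H'$; hence $\widetilde{H'}$ witnesses $p(G)\ge(p+q+1)-q=p+1$, contradicting $p(G)=p$. This forces the optimal subgraph in $G/e$ to avoid $v_e$ whenever equality holds, completing the biconditional.

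The main obstacle I anticipate is precisely this last bookkeeping with connected components under uncontraction: one must track carefully how deleting $v_e$ from $H'$ and re-inserting the edge $e$ (whose two endpoints may each attach to different parts of $H'$, to the same part, or to parts outside $V(H')$) affects the component count, and in particular rule out any scenario in which the count drops. A clean way to phrase it is that for every induced subgraph $K$ of $G$ with $v_e\notin V(K)$ in the natural sense, contracting $e$ in $G$ induces a map that can only decrease, by at most one, the number of components, and that the $(p+q+1)$-vertex uncontraction always realizes a subgraph whose component count meets the bound; making this uniform over all the overlap patterns of $e$ with the chosen subgraph is the delicate part, but it is entirely elementary graph theory once set up, with no homological input beyond \Cref{cor:pd}.
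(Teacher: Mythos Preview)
Your proposal is correct and rests on the same underlying idea as the paper, namely the correspondence between induced subgraphs of $G/e$ and induced subgraphs of $G$ that either avoid $e$ or contain $e$ as an edge. The paper, however, packages this much more cleanly: it simply records the decomposition
\[
\mathcal H_n^s(G/e)=\{H\in\mathcal H_n^s(G):V(H)\cap e=\emptyset\}\cup\{H/e:H\in\mathcal H_{n+1}^s(G)\text{ with }e\in E(H)\}
\]
for all $n\ge s+1\ge1$ and then invokes \Cref{thm:numerical} directly. From this identity both the inequality and the equality criterion fall out at once: for $p=\pd S(\sigma_q\Delta(G))$ one has $\mathcal H_{p+q+1}^s(G)=\emptyset$ for every $s\ge q$, so the second set in the union is empty at level $n=p+q$, and hence $\beta_{p,q}(\sigma_q\Delta(G/e))\ne0$ iff some $H\in\mathcal H_{p+q}^s(G)$ avoids $e$.

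The point worth noting is that the decomposition above absorbs exactly the ``delicate bookkeeping'' you flag. The reason it is one line is that contracting an edge inside an induced subgraph $H$ preserves the number of connected components, so $H\in\mathcal H_{n+1}^s(G)$ with $e\in E(H)$ gives $H/e\in\mathcal H_n^s(G/e)$, and conversely every $H'\in\mathcal H_n^s(G/e)$ containing $v_e$ arises this way from its uncontraction $\widetilde{H'}$. Once you see this, your separate treatment of the inequality via Hochster and \Cref{cor:EdgePreserve}, and of the equality case via the contradiction argument, become unnecessary: both are already encoded in the single combinatorial identity plus \Cref{thm:numerical}.
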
 

\begin{proof}
Observe that
$$
\mathcal H_n^s(G/e)=\{H\in\mathcal H_n^s(G):V(H)\cap e=\emptyset\}\cup\{H/e:H\in\mathcal H_{n+1}^s(G)\textup{ with }E(H)\ni e\}
$$
for all integers $n\geq s+1\geq 1$. Due to \Cref{thm:numerical} we are done.
\end{proof}

\Cref{ex:projdim} illustrates two cases: one where the projective dimension decreases and the other where it remains the same, depending on which edge is chosen for the contraction.

\begin{Ex}\label{ex:projdim}
Let $\Delta$ be the clique complex of the graph $G$ in \Cref{fig:3regbutminimalsecants} of \Cref{ex:sunlet}, and take $e_1$ (resp.\ $e_2$) to be an edge inside (resp.\ outside) the induced cycle $C_4$. 
Then $\pd S(\sigma_2 \Delta) = 4$, but we obtain $\pd S(\sigma_2 \Delta/e_1) = 4$ and $\pd S(\sigma_2 \Delta/e_2) = 3$.

\begin{table}[h!]
    \centering
    \begin{tabular}{c|c c c c c}
         $j \setminus i$  & 0 & 1 & 2 & 3 & 4\\
       \hline
      0 & 1 & . & . & . & .\\
      1 & . & . & . & . & .\\
      2 & . & 20 & 45 & 36 & 10
     \end{tabular}
    \caption{Betti table of $\sigma_2\Delta$}
    \label{tab:prodimSec2Delta}
\end{table}
\begin{table}[h!]
    \begin{subtable}[h]{0.45\textwidth}
    \centering
        \begin{tabular}{c|c c c c c}
          $j \setminus i$ & 0 & 1 & 2 & 3 & 4\\
        \hline
      0 & 1 & . & . & . & .\\
      1 & . & . & . & . & .\\
      2 & . & 11 & 18 & 9 & 1 \\ 
    \end{tabular} 
    \caption{Betti table of $\sigma_2 \Delta/e_1$}
    \end{subtable}
    \begin{subtable}[h]{0.45\textwidth}
    \centering
        \begin{tabular}{c|c c c c}
        $j \setminus i$ & 0 & 1 & 2 & 3\\
       \hline
      0 & 1 & . & . & .\\
      1 & . & . & . & .\\
      2 & . & 10 & 15 & 6
       \end{tabular}
    \caption{Betti table of $\sigma_2 \Delta/e_2$}
    \end{subtable}
    \caption{Betti tables of $\sigma_2\Delta/e_1$ and $\sigma_2\Delta/e_2$}
    \label{tab:prodimSec2Delta12}
\end{table}
\end{Ex}

 As an application of \Cref{cor:pd} we discuss \emph{Cohen-Macaulay} simplicial complexes.  
A simplicial complex $\Delta$ is called \emph{Cohen-Macaulay} if its Stanley-Reisner ring $S(\Delta)$ is Cohen-Macaulay, meaning that
$$\pd S(\Delta) = \codim\Delta.$$

We recall that any Cohen-Macaulay complex is pure, meaning that all of its facets have the same dimension.
Moreover, any Cohen-Macaulay complex of positive dimension is connected.
In addition, if a pure complex $\Delta$ admits a \emph{shelling order}, then it is Cohen-Macaulay; see \cite[Section 8]{MR2724673} for further details.

We focus on \emph{forests} $G$ and characterize those for which $\sigma_q\Delta(G)$ are Cohen-Macaulay for a given integer $q\geq 1$.
Here, we define a \emph{forest} as a disjoint union of \emph{trees}, that is, connected graphs having no cycles. Recall that a vertex $v$ in a forest $G$ is called
\begin{enumerate}
    \item a \emph{leaf} if $\deg_G v=1$,
    \item an \emph{internal vertex} if $\deg_G v\geq 2$, and
    \item a \emph{branch vertex} if $\deg_G v\geq 3$.
\end{enumerate}

We remark some facts about forests.
\begin{Rmk}\label{Rmk:forForests}
Let $G$ be a forest and $q\geq 1$ be an integer.
\begin{enumerate}
    \item $\Delta(G)$ has dimension at most one because $G$ is $C_3$-free.
    \item $\dim\sigma_q\Delta(G)\leq 2q-1$, and the equality holds if and only if the matching number is $\nu(G)\geq q$.
    \item $G$ is $q$-secant chordal by \Cref{thm:2Regular}.
    \item\label{Rmk:ConnforForests} $\Delta(G)$ is Cohen-Macaulay if and only if $G$ is a tree or $\overline{K_{r+1}}$, where $r = |V(G)| - 1$. 
    Indeed, if $G = \overline{K_{r+1}}$, then $\Delta(G)$ is Cohen-Macaulay by \Cref{cor:PKBetti}.
    If $G$ is a tree with at least one edge, then $\pd S(\Delta(G)) = |V(G)|-2 = \codim \Delta(G)$ by \Cref{cor:pd}.
    (Consider an internal vertex of $G$ unless $G=P_2$.) The converse is straightforward since $\Delta(G)$ must be connected.
\end{enumerate}
\end{Rmk}

Motivated by a property of projective curves: for a nondegenerate projective variety $X\subseteq\mathbb P^r$ over an algebraically closed field, if $X$ is a curve, then $\dim \sigma_q X = 2q - 1$ unless $\sigma_q X = \mathbb{P}^r$ (see \cite{palatini1906sulle}),
we assume that the forest $G$ has $\nu(G) \geq q$ for the remainder of this section.

\Cref{cor:pd} yields a criterion for Cohen-Macaulayness of higher secant complexes for such forests. Notice that if we delete an internal vertex of a forest, then the number of connected components strictly increases.

\begin{Cor}\label{Cor:Criterion}
Let $G$ be a forest.
    Suppose that $\nu(G) \geq q \geq 1$ and that $\sigma_q \Delta(G)$ is not a simplex.  
    Then $\sigma_q \Delta(G)$ is not Cohen-Macaulay if and only if there exists a set of $q-1$ vertices in $G$ whose deletion results in at least $q+1$ connected components.
\end{Cor}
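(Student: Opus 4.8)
\section*{Proof proposal for \Cref{Cor:Criterion}}

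The plan is to translate Cohen--Macaulayness into the numerical identity $\pd S(\sigma_q\Delta(G)) = \codim \sigma_q\Delta(G)$ and then read off both sides from graph-theoretic data. First I would record that a forest is $q$-secant chordal (\Cref{Rmk:forForests}), so that \Cref{cor:pd} applies and expresses $\pd S(\sigma_q\Delta(G))$ as the maximum of $|V(H)| - q$ over those induced subgraphs $H$ of $G$ that have at least $q+1$ connected components. Writing $n = |V(G)|$ and, for an integer $k \ge 0$, $f(k) = \max\{c(G-D) : D\subseteq V(G),\ |D| = k\}$, where $c(\cdot)$ denotes the number of connected components and $G-D$ is the forest obtained by deleting the vertices in $D$, every induced subgraph of $G$ is of the form $G-D$, so this maximum equals $n - q - k_0$, where $k_0 = \min\{k \ge 0 : f(k) \ge q+1\}$ (the minimum is attained since $\sigma_q\Delta(G)$ is not a simplex). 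On the other hand, $\nu(G) \ge q$ forces $\dim \sigma_q\Delta(G) = 2q-1$ by \Cref{Rmk:forForests}, hence $\codim \sigma_q\Delta(G) = n - 2q$. Using the codimension inequality $\pd \ge \codim$ (Auslander--Buchsbaum together with $\depth \le \dim$), I conclude that $\sigma_q\Delta(G)$ fails to be Cohen--Macaulay if and only if $n - q - k_0 > n - 2q$, that is, if and only if $k_0 \le q-1$.

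It then remains to show that $k_0 \le q-1$ is equivalent to $f(q-1) \ge q+1$, i.e.\ to the existence of a set of $q-1$ vertices whose deletion leaves at least $q+1$ connected components. One direction is immediate from the definition of $k_0$. For the converse I would establish the key monotonicity statement: $f(k) \le f(k+1)$ whenever $0 \le k < \nu(G)$. Granting this, if $k_0 \le q-1$, then since $q-1 < q \le \nu(G)$ the inequality $f(k_0) \ge q+1$ propagates up to $f(q-1) \ge q+1$, which is exactly the asserted condition.

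The monotonicity lemma is the heart of the proof and is where the matching hypothesis and the forest structure are genuinely used; I expect it to be the main, though not deep, obstacle. The argument I have in mind: fix $D$ with $|D| = k < \nu(G)$ realizing $f(k)$; since $G$ is bipartite, K\"{o}nig's theorem gives that the minimum size $\tau(G)$ of a vertex cover equals $\nu(G) > k$, so $D$ is not a vertex cover and some edge $\{u,w\}$ survives in $G-D$. In a forest, deleting a vertex of degree $d$ changes the number of connected components by exactly $d-1$ (immediate from the identity $c(F) = |V(F)| - |E(F)|$ valid for any forest $F$); applying this to $u$, whose degree in the forest $G-D$ is at least $1$, yields $c(G-(D\cup\{u\})) \ge c(G-D) = f(k)$, hence $f(k+1) \ge f(k)$. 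Conceptually this says that an obstructing deletion set can always be ``padded'' to size exactly $q-1$ without losing any components, which is precisely what justifies the clean index $q-1$ in the statement.
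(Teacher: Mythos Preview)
Your proposal is correct and follows the same route as the paper: compute $\codim\sigma_q\Delta(G)=|V(G)|-2q$ from \Cref{Rmk:forForests}, invoke \Cref{cor:pd} for the projective dimension, and compare. The paper's proof is a two-line application of these facts and simply asserts that $\pd S(\sigma_q\Delta(G))\ge |V(G)|-2q+1$ is equivalent to the existence of a set of exactly $q-1$ vertices whose deletion yields at least $q+1$ components; your monotonicity lemma via K\"onig's theorem makes explicit the ``padding'' step that the paper leaves to the reader, namely why a witnessing deletion set of size \emph{at most} $q-1$ can always be enlarged to one of size \emph{exactly} $q-1$ without dropping below $q+1$ components.
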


\begin{proof}  
    Note that $\codim \sigma_q \Delta(G) = |V(G)| - 2q$.  
    In light of \Cref{cor:pd} we have $\pd S(\sigma_q \Delta(G)) \geq |V(G)| - 2q + 1$ if and only if $G$ contains a set of $q-1$ vertices whose deletion induces at least $q+1$ connected components.
\end{proof}

Now we show that under a reasonable assumption the path graphs are the only graphs among forests whose higher secant complexes are Cohen-Macaulay.
We have seen that $\sigma_q \Delta(P_{r+1})$ is Cohen-Macaulay for any integer $q\ge 1$ in \Cref{cor:PKBetti}.

We begin with a lemma that generalizes part of \Cref{Rmk:forForests}\eqref{Rmk:ConnforForests}. 

\begin{Lem}\label{lem:components}
For a forest $G$ with $\nu(G) \ge q\geq 2$ the $q$-secant complex $\sigma_q\Delta(G)$ is not Cohen-Macaulay whenever $G$ is disconnected.
\end{Lem}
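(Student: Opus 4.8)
The plan is to deduce the lemma from \Cref{Cor:Criterion}. Since $\nu(G)\ge q$ we have $|V(G)|\ge 2q$, and if $|V(G)|=2q$ then $G$ carries a perfect matching, so $V(G)$ is a union of $q$ edges and $\sigma_q\Delta(G)$ is a simplex; hence we may assume $|V(G)|\ge 2q+1$, equivalently that $\sigma_q\Delta(G)$ is not a simplex. By \Cref{Cor:Criterion} it then suffices to produce a set $S\subseteq V(G)$ with $|S|=q-1$ such that $G-S$ has at least $q+1$ connected components. For the bookkeeping I would use the elementary identity that, for a forest $G$ with $t$ components, the deletion $G-S$ has exactly $t-|S|+e(S)$ components, where $e(S)$ denotes the number of edges of $G$ meeting $S$; this turns the task into finding $S$ with $|S|=q-1$ and $e(S)\ge 2q-t$. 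The case $t\ge q+1$ is then immediate: as $\nu(G)\ge q$ there are $q-1$ pairwise disjoint edges, and taking $S$ to consist of one endpoint of each of them gives $e(S)\ge q-1$, whence $G-S$ has at least $t-(q-1)+(q-1)=t\ge q+1$ components.

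The substantial case is $t\le q$, and this is where the real work lies. Write $G=G_1\sqcup\cdots\sqcup G_b\sqcup\overline{K_a}$, where $G_1,\dots,G_b$ are the components carrying at least one edge and $\overline{K_a}$ collects the $a=t-b$ isolated vertices; then $\sum_{i=1}^b\nu(G_i)=\nu(G)\ge q$, and since $\sigma_q\Delta(G)$ is not a simplex not every $G_i$ is a single edge. The idea is to distribute the $q-1$ allowed deletions among $G_1,\dots,G_b$ so as to cut the forest into as many pieces as possible, using as the key structural ingredient the following statement about a single tree $T$: for every $0\le s\le\nu(T)-1$ there is a set of $s$ vertices of $T$ whose removal leaves at least $s+1$ components (such vertices can be read off from a maximum matching of $T$ by peeling off internal vertices, and one can moreover arrange the choice to leave behind a vertex of degree $\ge 2$ unless $T$ is an even path). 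Granting this, if $\sum_{i=1}^b(\nu(G_i)-1)\ge q-1$ one picks $s_i\le\nu(G_i)-1$ with $\sum_i s_i=q-1$ and obtains at least $\sum_i(s_i+1)+a=(q-1)+b+a=t+q-1\ge q+1$ components. In the complementary situation $\sum_{i=1}^b(\nu(G_i)-1)<q-1$, the inequality $\sum_i\nu(G_i)\ge q$ forces $b\ge 2$ and leaves only at most $b-1$ unassigned deletions; assigning $\nu(G_i)-1$ deletions to each $G_i$ already yields at least $\sum_i\nu(G_i)+a\ge q+a$ pieces, and one places the leftover deletions on a piece still having a vertex of degree $\ge 2$ so that each of them creates one more component, reaching $q+1$ in total. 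The extreme subcase where every $G_i$ is an even path produces no such degree-$\ge 2$ vertex, but there the non-simplex hypothesis $2\sum_i\nu(G_i)+a\ge 2q+1$ forces $\sum_i\nu(G_i)+a\ge q+1$ on the nose, so the count already succeeds without the leftover deletions.

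The main obstacle is precisely this allocation in the case $t\le q$: establishing the single-tree statement cleanly, and verifying that in the edge-heavy degenerate subcase the leftover deletions can always be absorbed (with the all-even-paths configuration handled by the non-simplex normalization rather than by a further deletion). All the other ingredients---the reduction to $\sigma_q\Delta(G)$ not a simplex, the component-counting identity, the case $t\ge q+1$, and the final appeal to \Cref{Cor:Criterion}---are routine.
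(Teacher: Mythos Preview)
Your overall route via \Cref{Cor:Criterion} is sound, and the single--tree statement you need (for a tree $T$ and $0\le s\le \nu(T)-1$ one can delete $s$ vertices so that at least $s+1$ components remain) is correct. However, the strengthened version you invoke in the second subcase --- that ``one can moreover arrange the choice to leave behind a vertex of degree $\ge 2$ unless $T$ is an even path'' --- is false. Take the tree $T$ on $\{1,\dots,6\}$ with edges $\{1,2\},\{2,3\},\{3,4\},\{3,5\},\{5,6\}$; then $\nu(T)=3$ and $T\neq P_6$, yet the only $2$--subsets $S$ with $T-S$ having $\ge 3$ components are $\{2,3\},\{3,5\},\{2,5\}$, and in each case every surviving component is a single edge or a singleton, so no vertex of degree $\ge 2$ remains. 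Since your absorption of the ``leftover deletions'' in the subcase $\sum_i(\nu(G_i)-1)<q-1$ with $a=0$ rests on exactly this claim, that step is not justified as written. (A repair is possible: what you actually need is a surviving piece with at least two vertices --- a leaf there absorbs one deletion without lowering the count --- and when no such piece exists all pieces are singletons, whence a direct count using $|V(G)|\ge 2q+1$ shows the component number stays $\ge q+2$ even after deleting $L$ singletons. But this is not the argument you gave.)

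For comparison, the paper avoids the whole allocation analysis. It splits $G=G_1\sqcup G_2$ into just two blocks; if one block has no edge then $\sigma_q\Delta(G)$ is not pure, hence not Cohen--Macaulay; otherwise the matching decomposes as $q=q_1+q_2$ with $q_1,q_2\ge 1$, one arranges $\sigma_{q_1}\Delta(G_1)$ non--simplicial, and then \Cref{cor:pd} (applied once to $G_1$ with parameter $q_1$ and once to $G_2$ with parameter $q_2-1$) immediately yields $q_1$ deletions in $G_1$ producing $\ge q_1+1$ pieces and $q_2-1$ deletions in $G_2$ producing $\ge q_2$ pieces, for a total of $q-1$ deletions and $\ge q+1$ pieces. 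No component--counting identity, no case split on $t$, and no leftover bookkeeping are needed.
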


\begin{proof}
Suppose that $G = G_1 \sqcup G_2$. 
For a given matching $M$ of size $q$ in $G$, let $q_1$ and $q_2$ denote the number of edges in $M$ that belong to $G_1$ and $G_2$, respectively, with $q_1 + q_2 = q$.
If $G_1$ or $G_2$ has no edges, then $\sigma_q \Delta(G)$ is not pure.
Therefore, one can assume that both $q_1, q_2 \geq 1$.  
Moreover, since $\sigma_q \Delta(G)$ is not a simplex, we may suppose without loss of generality that $\sigma_{q_1} \Delta(G_1)$ is not a simplex.  
Also, we remark that $\sigma_{q_2-1} \Delta(G_2)$ is also not a simplex when $q_2 \geq 2$.

By \Cref{thm:numerical} or \Cref{cor:pd} one can remove some subset $W_1$ (resp.\ $W_2$) of $q_1$ (resp.\ $q_2-1$) vertices from $G_1$ (resp.\ $G_2$) in order to obtain at least $q_1 + 1$ (resp.\ $q_2$) connected components.
(The case $q_2=1$ also works.)
Then for the subset $W := W_1 \sqcup W_2$ of $q-1$ vertices, its deletion from $G$ has $q + 1$ or more connected components.
Thus, \Cref{Cor:Criterion} concludes that $\sigma_q\Delta(G)$ is not Cohen-Macaulay.
\end{proof}

In what follows, for a matching $M$ we denote by $V(M)$ the set of endpoints of edges in $M$.

\begin{Lem}\label{Lem:UsefulMatching}
    Let $T$ be a tree such that $\nu(T)\geq q\geq 1$. If $V(T)\geq 2q+2$, then there is a matching $M$ of size $q$ such that $V(T)\setminus V(M)$ contains two different leaves of $T$.
\end{Lem}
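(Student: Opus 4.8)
The plan is to argue by induction on $q$, building the matching greedily from the ``ends'' of the tree. For the base case $q=1$: since $|V(T)|\geq 4$ and $T$ is a tree, $T$ has at least two leaves, and in fact any tree on $\geq 4$ vertices has a diametral path $P$ of length $\geq 3$; taking $M$ to be a single edge in the ``middle'' of $P$ leaves the two endpoints of $P$ — which are leaves of $T$ — outside $V(M)$. More carefully, one picks an edge $e=\{u,v\}$ on a longest path such that neither $u$ nor $v$ is a leaf of $T$; such an edge exists because a longest path has at least $4$ vertices, so its two internal-most vertices are non-leaves of $T$, and they are adjacent only when the path has exactly $4$ vertices — in which case $|V(T)|=4$ forces $T=P_4$ and the claim is immediate. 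This disposes of $q=1$.

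For the inductive step, assume the statement for $q-1$ and let $T$ be a tree with $\nu(T)\geq q\geq 2$ and $|V(T)|\geq 2q+2$. The idea is to peel off one edge near a leaf so as to reduce to a subtree on which the inductive hypothesis applies while keeping two ``reserved'' leaves untouched. First I would fix a longest path $P=(w_0,w_1,\dots,w_\ell)$ in $T$; here $\ell\geq 2q+1$ if $T$ itself is a path, but in general $\ell$ may be smaller, so a little care is needed. Consider the leaf $w_0$ and its unique neighbor $w_1$. Set $e=\{w_0,w_1\}$ and let $T'=T-w_0-w_1$ — but one must check $T'$ is connected (it need not be) and that $\nu(T')\geq q-1$ and $|V(T')|\geq 2q$. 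Connectivity fails exactly when $w_1$ is a branch vertex; to handle that, instead delete $e'=\{w_1,w_2\}$ only after noting $w_0$ is then a forced isolated vertex, or better: choose the leaf $w_0$ so that $w_1$ has degree $2$ in $T$ (possible when $T$ is not a ``spider'' — and the spider/star cases can be dispatched separately using that $\nu$ of a star is $1<q$). On $T'$ (a tree) the inductive hypothesis supplies a matching $M'$ of size $q-1$ leaving two leaves of $T'$ unmatched; then $M=M'\cup\{e\}$ works, provided the two reserved leaves of $T'$ are still leaves of $T$ or can be replaced by leaves of $T$ — and $w_0$ itself is always available as one of the two reserved leaves of $T$ after we remove $w_1$ but not $w_0$.

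The main obstacle I anticipate is precisely the bookkeeping around branch vertices: when every leaf's neighbor is a branch vertex, deleting an edge incident to a leaf disconnects the tree, and one cannot cleanly invoke the tree-version of the inductive hypothesis. The clean fix is to strengthen the induction to forests (or to phrase the recursion so that after deletion one applies Lemma \ref{lem:components}-type reasoning to the component containing the bulk of a large matching), or alternatively to induct on $|V(T)|$ rather than $q$ and remove a single leaf at a time, tracking the matching number via König's theorem / the deficiency version of Hall's theorem. A second, minor obstacle is ensuring the two reserved leaves are \emph{distinct} and really are leaves of the original $T$, not merely of the subtree; this is handled by always keeping $w_0$ as one reserved leaf and arguing that $T'$ — having $\geq 2q\geq 2(q-1)+2$ vertices — still has two leaves, at least one of which is a leaf of $T$ because $w_1$ contributed at most one new leaf to $T'$.
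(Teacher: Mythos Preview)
Your inductive plan is a genuinely different route from the paper's, and as you yourself flag, the bookkeeping does not close. Two concrete problems:

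\emph{First}, the sentence ``$w_0$ itself is always available as one of the two reserved leaves'' is incompatible with the rest of your step: you set $M=M'\cup\{e\}$ with $e=\{w_0,w_1\}$, so $w_0$ is matched and cannot serve as a reserved leaf. There is a repair you do not mention: if the inductive call returns $w_2$ as one of the two leaves of $T'$, then $w_2$ is unmatched by $M'$, so you may instead take $M=M'\cup\{\{w_1,w_2\}\}$, which leaves $w_0$ (a leaf of $T$) and the other returned leaf (necessarily $\neq w_2$, hence a leaf of $T$) unmatched. Without this swap the step fails.

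\emph{Second}, the reduction to ``$w_1$ has degree $2$'' is not justified. You dismiss the bad case as ``spider/star'', but the obstruction ``every leaf's neighbour is a branch vertex'' occurs far more broadly (double stars, short caterpillars with many pendants, etc.), and such trees can have $\nu\geq q$. Your fallback ``strengthen the induction to forests'' is not innocuous: in a forest, isolated vertices created by the deletion are not leaves, so the inductive statement changes shape. This needs to be worked out, not asserted.

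By contrast, the paper avoids induction entirely with a short extremal argument: among all pairs $(M,\{v_1,v_2\})$ with $M$ a size-$q$ matching and $v_1,v_2\in V(T)\setminus V(M)$, choose one maximising the distance from $v_1$ to $v_2$; then if $v_1$ were not a leaf it would have a neighbour $w$ off the $v_1$--$v_2$ path, and either $w$ is unmatched (lengthening the path directly) or $w$ is matched to some $u$, in which case swapping $\{w,u\}$ for $\{w,v_1\}$ in $M$ frees $u$ and again lengthens the path. This two-line augmenting swap replaces all of your connectivity and leaf-preservation case analysis.
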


\begin{proof}
Let $(M,\{v_1,v_2\})$ be a pair of a size $q$ matching $M$ of $T$ and a subset $\{v_1,v_2\}$ of two distinct vertices in $V(T)\setminus V(M)$ such that the path $P$ connecting $v_1$ and $v_2$ has the maximum distance among such pairs. We claim that $v_1$ and $v_2$ are leaves of $T$.

To the contrary suppose that $v_1$ is not a leaf, which means that there is a vertex $w\in V(T)\setminus V(P)$ adjacent to $v_1$. We divide into two cases according to the location of $w$. If $w\not\in V(M)$, then the path between $w$ and $v_2$ is longer than $P$, a contradiction to the maximality. 
If $w\in V(M)$ with $\{w,u\}\in M$, then replace $\{w,u\}$ with $\{w,v_1\}$ in $M$, and then replace $v_1$ with $u$ for the two-vertex subset, a contraction once again.
\end{proof}

\begin{proof}[Proof of \Cref{thm:CMtrees}]
The ``if'' part immediately follows by \Cref{cor:PKBetti}.
We prove the converse, saying that $\sigma_q\Delta(G)$ is Cohen-Macaulay. 
By \Cref{lem:components} the forest $G$ must be a tree $T$, and by \Cref{Lem:UsefulMatching} it carries a matching $M$ of size $q$ and a pair $\{\ell_1,\ell_2\}$ of leaves of $T$ in $V(T)\setminus V(M)$. Let $P$ be the path between $\ell_1$ and $\ell_2$ in $T$. It is enough to show that $V(P)$ contains no branch vertices of $T$. 

By way of contradiction, based on \Cref{Cor:Criterion}, assume that a branch vertex $b$ of $T$ lies in $P$. Grouping connected components of $T-b$, we write 
$$
T-b=H_1\sqcup H_2\sqcup H_3,
$$
where $\ell_1\in V(H_1)$, $\ell_2\in V(H_2)$, and $V(H_3)\neq\emptyset$. Then $H_1\sqcup H_2\sqcup H_3$ allows a size $q-1$ matching $M'$ contained in $M$ so that for each $i=1,2,3$ the subgraph $H_i$ has a matching $M'_i$ of size $q_i$ together with $q_1+q_2+q_3=q-1$.

Furthermore, for each $i\in\{1,2\}$ and $1\leq j_i\leq q_i$, the $j_i$-secant complex $\sigma_{j_i}\Delta(H_i)$ is not a simplex due to $\ell_i$, hence for every $0\leq j_i\leq q_i$ after deleting a subset $W_i\subseteq V(H_i)$ of $j_i$ vertices from $H_i$, we obtain $j_i+1$ or more connected components. Now consider two cases: $q_3=0$ or $q_3\geq 1$.

Suppose that $q_3=0$. We may assume that $q_2\geq 1$. Put 
$$
W=\{b\}\sqcup W_1\sqcup W_2
$$ 
after setting $j_1=q_1$ and $j_2=q_2-1$. Then $|W|=q-1$, and by removing $W$ from $T$ at least $q+1$ connected components remain.

Now let us see the case $q_3\geq 1$. Note that $V(H_3)$ admits a subset $W_3$ of $q_3-1$ vertices whose deletion from $H_3$ produces $q_3$ or more connected components. Take 
$$
W=\{b\}\sqcup W_1\sqcup W_2\sqcup W_3
$$
together with $j_1=q_1$ and $j_2=q_2$. As above $T$ becomes a union of at least $q+1$ connected components when we delete $W$ from it.
\end{proof}

The condition $V(G) \geq 2q+2$ in \Cref{thm:CMtrees} is necessary.
Indeed, there exist trees $T$ on $2q+1$ vertices for which $\sigma_q\Delta(T)$ is Cohen-Macaulay even though $T$ is not a path.

\begin{Prop}
Let $q\geq 1$ be an integer, $T_0$ be a tree on $q+1$ vertices, and $T$ be the tree obtained from $T_0$ by subdividing every edge of $T_0$ with one new vertex. Then $\sigma_q\Delta(T)$ is Cohen-Macaulay, but $\nu(T)=q$.
\end{Prop}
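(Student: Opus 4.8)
The plan is to invoke \Cref{Cor:Criterion}, which converts Cohen-Macaulayness of $\sigma_q\Delta(T)$ into a statement about vertex deletions in $T$, and then to settle that statement with an explicit matching. First I would fix notation: write $O$ for the $q+1$ vertices of $T_0$ and $D$ for the $q$ new subdivision vertices, so that $T$ is bipartite with parts $O$ and $D$, has $2q+1$ vertices and $2q$ edges, and $O$ and $D$ are each independent sets of $T$. The equality $\nu(T)=q$ is then forced: on the one hand $\nu(T)\le\lfloor(2q+1)/2\rfloor=q$, while on the other hand, rooting $T_0$ at an arbitrary vertex and matching each non-root vertex $b$ of $T_0$ with the subdivision vertex lying on the edge from $b$ to its parent produces a size-$q$ matching of $T$ (these edges are pairwise disjoint, since distinct non-root vertices and distinct parent edges are involved). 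Since $O$ is an independent set of size $q+1$, \Cref{thm:Prolong} gives $x_O\in I(\sigma_q\Delta(T))_{q+1}$, so $\sigma_q\Delta(T)$ is not a simplex and \Cref{Cor:Criterion} applies.

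Next I would argue by contradiction: suppose $\sigma_q\Delta(T)$ is not Cohen-Macaulay, so there is $W\subseteq V(T)$ with $|W|=q-1$ for which $T-W$ has at least $q+1$ connected components. Since $T-W$ is a forest on $q+2$ vertices, its number of components equals $q+2-|E(T-W)|$, so $|E(T-W)|\le 1$; equivalently, $W$ is incident to at least $2q-1$ edges of $T$. On the other hand, every vertex cover of $T$ contains at least $\nu(T)=q$ vertices, so the set $W$ of size $q-1$ fails to cover some edge, whence $W$ is incident to exactly $2q-1$ edges and exactly one edge $e_0$ of $T$ has both endpoints outside $W$. Because $T[O]$ and $T[D]$ are edgeless, $e_0$ must join a vertex $u\in O$ to a subdivision vertex, say $e_0=\{u,w_e\}$, where $e=\{u,v\}$ is the edge of $T_0$ that was subdivided by $w_e$.

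The contradiction is then produced by exhibiting a size-$q$ matching of $T$ that avoids $e_0$: root $T_0$ at $u$ and, exactly as in the first paragraph, match every non-root vertex of $T_0$ with the subdivision vertex on its parent edge, obtaining a matching $M$ with $|M|=q$. Every edge of $M$ has its $O$-endpoint different from the root $u$, so $e_0=\{u,w_e\}\notin M$. But $W$ meets every edge of $T$ except $e_0$, hence meets every edge of the matching $M$, forcing $|W|\ge|M|=q$ and contradicting $|W|=q-1$; therefore $\sigma_q\Delta(T)$ is Cohen-Macaulay. The one point that requires a little care — and the real content of the proof — is recognizing that the deletion criterion of \Cref{Cor:Criterion} is governed by matchings of $T$: once one sees that a ``bad'' set $W$ must miss \emph{exactly one} edge $e_0$ (via the component count $q+2-|E(T-W)|$ and the trivial bound relating vertex covers and matchings), the explicit rooted matching kills it; the remaining ingredients, namely $\nu(T)=q$ and the non-simplex condition, have already been checked above.
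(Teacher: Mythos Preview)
Your proof is correct and takes a genuinely different route from the paper's. The paper proves, by induction on $q$ (peeling off a leaf $\ell$ of $T_0$ together with the adjacent subdivision vertex), that $V(T_0)$ is the \emph{unique} independent set of size $q+1$ in $T$; since $T$ is $q$-secant chordal, this makes $I(\sigma_q\Delta(T))$ a principal ideal generated by the single monomial $x_{V(T_0)}$, hence $\pd S(\sigma_q\Delta(T))=1=\codim\sigma_q\Delta(T)$. The $\nu(T)=q$ part is handled by the same leaf-peeling induction.

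Your argument instead feeds directly into \Cref{Cor:Criterion}: after the easy checks that $\nu(T)=q$ and that $O=V(T_0)$ witnesses non-simplexness, you observe that a hypothetical bad set $W$ of size $q-1$ would have to cover all but one edge $e_0$ of $T$ (via the forest component count $q+2-|E(T-W)|\ge q+1$ together with the weak duality $|W|<\nu(T)$), and then you construct a size-$q$ matching avoiding $e_0$ by rooting $T_0$ at the $O$-endpoint of $e_0$, forcing $|W|\ge q$. This avoids induction entirely and is arguably cleaner; on the other hand, the paper's approach yields the extra structural information that the Stanley-Reisner ideal is principal (equivalently, that $O$ is the \emph{only} independent $(q+1)$-set), which your contradiction argument does not directly exhibit.
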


\begin{proof}
To show that $\sigma_q\Delta(T)$ is Cohen-Macaulay, we claim that $V(T_0)\subset V(T)$ induces the only subgraph on $q+1$ vertices with no edges in $T$. 
To this end we use induction on $q\geq 1$. The initial step $q=1$ is trivial, hence we set $q\geq 2$. Let $\ell$ be a leaf of $T_0$, and take $T_0'=T_0-\ell$ to be its vertex deletion together with the tree $T'$ constructed in the same way for $T_0'$. By the induction hypothesis $V(T_0')\subset V(T')$ gives the unique induced subgraph on $q$ vertices without edges in $T'$. Then in consideration of the deletion of $\ell$ and the vertex adjacent to $\ell$ from $T$, obviously if $H$ is an induced subgraph on $q+1$ vertices with no edges in $T$, then $V(H)=V(T_0')\sqcup\{\ell\}=V(T_0)$, and vice versa.

The part $\nu(T)=q$ is also proved by the same induction argument. Let $v\in V(T)$ be the vertex adjacent to $\ell$. Its removal from $T$ gives a matching of size $\nu(T)-1$ in $T'$. So $\nu(T)-1\leq\nu(T')=q-1$. On the other hand, adding the edge $\{v,\ell\}$ produces a matching of size $\nu(T')+1$ in $T$. Therefore, the induction process says that $\nu(T)=q$.
\end{proof}

\begin{Rmk}
A similar construction yields trees $\widehat{T}$ on $2q+1$ vertices such that $\sigma_q\Delta(\widehat{T})$ is not Cohen-Macaulay, and $\nu(\widehat{T})=q$.

For an integer $q\geq 2$ take $T_0'$ to be a tree on $q$ vertices, and let $T'$ be the subdivision of $T_0'$ as above.
Introduce a new path $P_3$, and identify one of its endpoints with a vertex of $T'$ that does not belong to $T_0'$. Then the formed tree $\widehat{T}$ satisfies $\nu(\widehat{T})=q$ for the same reason as above. However, $\sigma_q\Delta(\widehat{T})$ is not Cohen-Macaulay. Indeed, $T$ has (exactly) two induced subgraphs on $q+1$ vertices with no edges. They contain the only induced subgraph on $q$ vertices without edges in $T'$ and depend on the choice of an unidentified vertex in $P_3$.
\end{Rmk}

\bigskip

\section{Problems}\label{Sec:Problems}
This section lists problems of our interest.

One would ask whether the assumption that $S(\sigma_j\Delta)$ satisfies property $N_{j+1,1}$ for every $2\leq j<q$ is relevant in \Cref{MainCor}. It can be rephrased in the purely graph-theoretic language as follows.

\begin{Q}\label{Quest:qSecChordal}
Let $G$ be a graph and $q\geq 3$ be an integer. If $G$ is $\mathcal F_{q,1}$-, $\mathcal F_{q,2}$-, and $C_{2q+3}$-free, then is it $\mathcal F_{q+1,1}$-free?
\end{Q}

\begin{Rmk}
A searching with Macaulay2 \cite{M2} witnesses that for the case $q=3$ there are no counterexamples to the statement in \Cref{Quest:qSecChordal} up to $9$ vertices.
\end{Rmk}

The converse of the statement in \Cref{Quest:qSecChordal} is not true even if we replace the graph $G$ with a suitable one. We provide an example to demonstrate this.

\begin{Ex}\label{Ex:3SecbutNot2Sec}
Let $G$ be the graph illustrated in \Cref{fig:3SecbutNot2Sec}. One can see that $G$ is $3$-secant chordal but not $2$-secant chordal as it contains $C_5$ as an induced subgraph. However, it is easy to check that $G$ is the unique graph yielding the same $\sigma_q\Delta(G)$.

\begin{figure}[h!]
$$
\begin{tikzcd}[column sep = 1 em]
    && \circ \ar[dll,no head] \ar[drr,no head] \\
    \circ \ar[dr,no head] & \circ \ar[d,no head] & & \circ \ar[d,no head] & \circ \ar[dl,no head] \\
    \circ \ar[r,no head] & \circ \ar[rr,no head] & & \circ \ar[r,no head] & \circ 
\end{tikzcd}
$$
    \caption{A graph that is $3$-secant chordal but not $2$-secant chordal}
    \label{fig:3SecbutNot2Sec}
\end{figure}
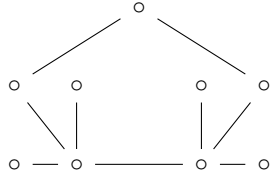

\begin{table}[h!]
    \centering
    \begin{tabular}{rl}
    $S(\sigma_2\Delta(G))$:\quad & \begin{tabular}{c|C{3ex}C{3ex}C{3ex}C{3ex}C{3ex}C{3ex}C{3ex}}
         $j \setminus i$ & 0 & 1 & 2 & 3 & 4 & 5 & 6\\
       \hline
      0 & 1 & . & . & . & . & . & .\\
      1 & . & . & . & . & . & . & .\\
      2 & . & 36 & 111 & 140 & 86 & 24 & 2\\
      3 & . & . & . & . & . & . & .\\
      4 & . & 1 & 4 & 6 & 4 & 1 & .
      \end{tabular} \\
      [-0.5em]\\
        $S(\sigma_3\Delta(G))$:\quad & \begin{tabular}{c|C{3ex}C{3ex}C{3ex}C{3ex}C{3ex}}
         $j \setminus i$ & 0 & 1 & 2 & 3 & 4\\
       \hline
      0 & 1 & . & . & . & .\\
      1 & . & . & . & . & .\\
      2 & . & . & . & . & .\\
      3 & . & 23 & 48 & 34 & 8
      \end{tabular}
    \end{tabular}
    \caption{Betti tables of $S(\sigma_2\Delta(G))$ and $S(\sigma_3\Delta(G))$ for \Cref{Ex:3SecbutNot2Sec}}
\end{table}
\end{Ex}

\Cref{F2toF1} naturally gives rise to the following problem. Refer to an observation that for all integers $q_1,q_2\geq 2$ if $G_1\in\mathcal F_{q_1,1}$ and $G_2\in\mathcal F_{q_2,1}$ are given, then $G_1\sqcup G_2\in\mathcal F_{q_1+q_2+1,1}$ by \Cref{Isol}.

\begin{Prob}
Let $q\geq 3$ be an integer. Find and characterize connected graphs $G$ lying in $\mathcal F_{q,1}$ but not appearing as edge contractions of elementary bipartite graphs on $2q+2$ vertices.
\end{Prob}

From \cite[Theorem 1.1]{MR4441153} it follows that for a nondegenerate projective variety $X\subseteq\mathbb P^r$ if $\sigma_qX$ is $(q+1)$-regular, then $S(\sigma_qX)$ is always Cohen-Macaulay. 
However, a natural counterpart in combinatorics does not hold. 
For instance, there are many chordal graphs $G$ for which $S(\Delta(G))$ is not Cohen-Macaulay. See \cite{MR2231097} for information on such chordal graphs.

\begin{Prob}\label{Prob:CM}
Let $G$ be a $q$-secant chordal graph for an integer $q\geq 1$. Find a necessary and sufficient condition for $\sigma_q\Delta(G)$ to be Cohen-Macaulay in terms of graph theory. What about the shellability?
\end{Prob}

\Cref{cor:prdim} would be a strong hint by producing examples and nonexamples of $q$-secant chordal graphs whose $\sigma_q\Delta(G)$ are Cohen-Macaulay.

For forests $G$ and their $q$-secant complexes $\sigma_q\Delta(G)$ we may establish lower bounds for the difference of projective dimension and codimension via the number of connected components, the array of degrees of vertices, the arrangement of branch vertices, etc.

\begin{Prob}
Let $G$ be a forest and $q\geq 1$ be an integer. Find a nice sharp lower bound for $\pd S(\sigma_q\Delta(G))-\codim\sigma_q\Delta(G)$.
\end{Prob}

\bibliographystyle{amsalpha}
\bibliography{ref}

\end{document}